\newtheorem{thm}{Theorem}[section]
\newtheorem{cor}[thm]{Corollary}
\newtheorem{lem}[thm]{Lemma}
\newtheorem{prop}[thm]{Proposition}
\newtheorem*{thm*}{Theorem}
\newtheorem*{cor*}{Corollary}
\newtheorem*{thma}{Theorem \ref{1.2}}
\newtheorem*{thmb}{Theorem \ref{1.3}}
\newtheorem*{thmc}{Theorem \ref{hyper}}
\newtheorem*{thmd}{Theorem \ref{1.4}}
\theoremstyle{definition}
\newtheorem{dfn}[thm]{Definition}
\newtheorem{rem}[thm]{Remark}
\newtheorem*{conv}{Convention}
\newtheorem{ex}[thm]{Example}
\newtheorem*{claim*}{Claim}
\newtheorem{nota}[thm]{Notation}
\theoremstyle{remark}
\renewcommand{\qedsymbol}{$\blacksquare$}
\numberwithin{equation}{thm}
\def\X{\mathcal{X}}
\def\Y{\mathcal{Y}}
\def\ZZ{\mathcal{Z}}
\def\OO{\mathcal{O}}
\def\mod{\operatorname{\mathsf{mod}}}
\def\Spec{\operatorname{\mathsf{Spec}}}
\def\add{\operatorname{\mathsf{add}}}
\def\Max{\operatorname{\mathsf{Max}}}
\def\NF{\operatorname{\mathsf{NF}}}
\def\FPD{\operatorname{\mathsf{PD}}}
\def\pd{\operatorname{\mathsf{pd}}}
\def\res{\operatorname{\mathsf{res}}}
\def\CM{\operatorname{\mathsf{MCM}}}
\def\depth{\operatorname{\mathsf{depth}}}
\def\Ext{\operatorname{\mathsf{Ext}}}
\def\height{\operatorname{\mathsf{ht}}}
\def\Ass{\operatorname{\mathsf{Ass}}}
\def\Im{\operatorname{\mathsf{Im}}}
\def\ext{\operatorname{\mathsf{ext}}}
\def\PD{\operatorname{\mathsf{PD}}}
\def\Sing{\operatorname{\mathsf{Sing}}}
\def\CMdim{\operatorname{\mathsf{CMdim}}}
\def\h{\operatorname{\mathsf{h}}}
\def\FD{\operatorname{\mathsf{L}}}
\def\IPD{\operatorname{\mathsf{IPD}}}
\def\Rfd{\operatorname{\mathsf{Rfd}}}
\def\grade{\operatorname{\mathsf{grade}}}
\def\Tor{\operatorname{\mathsf{Tor}}}
\def\Supp{\operatorname{\mathsf{Supp}}}
\def\Hom{\operatorname{\mathsf{Hom}}}
\def\Proj{\operatorname{\mathsf{Proj}}}
\def\N{\mathbb{N}}
\def\P{\mathbb{P}}
\def\syz{\mathsf{\Omega}}
\def\tr{\mathsf{Tr}}
\def\p{\mathfrak{p}}
\def\q{\mathfrak{q}}
\def\m{\mathfrak{m}}
\def\xx{\text{\boldmath $x$}}
\begin{document}
\setlength{\baselineskip}{15pt}
\title[Resolving subcategories and grade consistent functions]{Classification of resolving subcategories and grade consistent functions}
\author{Hailong Dao}
\address{Department of Mathematics, University of Kansas, Lawrence, KS 66045-7523, USA}
\email{hdao@math.ku.edu}
\urladdr{http://www.math.ku.edu/~hdao/}
\author{Ryo Takahashi}
\address{Graduate School of Mathematics, Nagoya University, Furocho, Chikusaku, Nagoya 464-8602, Japan/Department of Mathematics, University of Nebraska, Lincoln, NE 68588-0130, USA}
\email{takahashi@math.nagoya-u.ac.jp}
\urladdr{http://www.math.nagoya-u.ac.jp/~takahashi/}
\thanks{2010 {\em Mathematics Subject Classification.} Primary 13C60; Secondary 13D05, 13H10}
\thanks{{\em Key words and phrases.} resolving subcategory, finite projective dimension, locally free on the punctured spectrum, Cohen-Macaulay ring{, locally hypersurface}}
\thanks{The first author was partially supported by NSF grants DMS 0834050 and DMS 1104017. The second author was partially supported by JSPS Grant-in-Aid for Young Scientists (B) 22740008 and by JSPS Postdoctoral Fellowships for Research Abroad}
\begin{abstract}
We classify certain resolving subcategories of finitely generated modules over a commutative noetherian ring $R$ by using integer-valued functions on $\Spec R$.
As an application, we give a complete classification of resolving subcategories when $R$ is a locally hypersurface ring or a  complete intersection.
Our results also recover and categorify a ``missing theorem'' by Auslander.
\end{abstract}
\maketitle

\section{Introduction}\label{intro}
Throughout the present paper, let $R$ be a commutative noetherian ring with identity which is not necessarily of finite Krull dimension.
All modules are assumed to be finitely generated.
We denote by $\mod R$ the category of (finitely generated) $R$-modules.
A resolving subcategory of $\mod R$ is a full subcategory that contains all projective modules and is closed under direct summands, extensions and syzygies.
(Here, a syzygy of an $R$-module $M$ means the kernel of some epimorphism from a projective $R$-module to $M$.)
In this paper we give a classification of certain resolving subcategories of $\mod R$ using functions from $\Spec R$ to the integers.
Our results can be applied to obtain classification of all resolving subcategories when $R$ is a locally hypersurface ring or a complete intersection.

There has been a flurry of research activities on classification of subcategories associated to algebraic objects in recent years.
These results are becoming increasingly influential in ring theory, homotopy theory, algebraic geometry and representation theory.
For instance, Gabriel \cite{Ga} classified all Serre subcategories of $\mod R$ using specialization closed subsets of $\Spec R$.
Devinatz, Hopkins and Smith \cite{DHS,HS} classified thick subcategories in the stable homotopy category.
Hopkins {\cite{Hop}} and Neeman {\cite{N}} gave a similar result for thick subcategories of the derived categories of perfect complexes over $R$.
The Hopkins-Neeman theorem was extended to schemes by Thomason \cite{T} and recently to group algebras by Benson, Iyengar and Krause {\cite{BIK}}.
For many other similar results, see \cite{BCR, K, S, wide, stcm} and the references therein.

Generally speaking, the less restrictive conditions one imposes on the subcategories the harder it is to classify them.
A well-known difficulty is  that $\mod R$ is ``too big", even when $R$ is nice (say, regular local).
Thus attentions are often restricted to more special categories such as that of maximal Cohen-Macaulay modules, or derived categories.
However, it has recently emerged that resolving subcategories of $\mod R$ are reasonable objects to look at, and they capture substantial information on the singularities of $R$, see \cite{radius,dim}.
Therefore, it is natural to ask if one can classify all such subcategories.

Our main results indicate that such  task is quite subtle but at the same time not hopeless.  The key new idea is to use certain integer-valued functions {on} $\Spec R$. Let us describe this class of functions which will be crucial for the rest of the paper.

\begin{dfn}\label{gcdef}
Let $\N$ denote the set of nonnegative integers.
Let $f$ be an $\N$-valued function on $\Spec R$.
We call $f$ {\it grade consistent} if it satisfies the following two conditions.
\begin{itemize}
\item
For all $\p\in\Spec R$ one has $f(\p)\le\grade\p$.
\item
For all $\p,\q\in\Spec R$ with $\p\subseteq\q$ one has $f(\p)\le f(\q)$.
\end{itemize}
\end{dfn}

\noindent
Here, recall that for a proper ideal $I$ of $R$, $\grade I$ denotes the grade of $I$, i.e., the minimum of the integers $n\ge0$ with $\Ext_R^n(R/I,R)\ne0$.
This is known to be equal to the common length of the maximal $R$-regular sequences in $I$.
We denote by $\FPD(R)$ the subcategory of $\mod R$ consisting of modules of finite projective dimension (we use the convention that the projective dimension of the zero module is $-\infty$, which is also regarded as finite).
One of our main results states

\begin{thm}\label{1.2}
Let $R$ be a commutative noetherian ring.
There is a 1-1 correspondence
$$
\begin{CD}
\left\{
\begin{matrix}
\text{Resolving subcategories of $\mod R$}\\
\text{contained in $\FPD(R)$}
\end{matrix}
\right\}
\begin{matrix}
@>{\phi}>>\\
@<<{\psi}<
\end{matrix}
\left\{
\begin{matrix}
\text{Grade consistent functions}\\
\text{on $\Spec R$}
\end{matrix}
\right\}.
\end{CD}
$$
Here $\phi,\psi$ are defined by $\phi(\X)(\p)=\max_{X\in\X}\{\pd X_\p\}$ and $\psi(f)=\{\,M\in\mod R\mid\text{$\pd M_\p\le f(\p)$ for all $\p\in\Spec R$}\,\}$.
\end{thm}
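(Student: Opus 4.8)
Both assignments must first be shown to be well defined. For $\phi$: if $X\in\FPD(R)$ and $\p\subseteq\q$, localizing a finite free resolution gives $\pd X_\p\le\pd X_\q$, so $\phi(\X)$ is order preserving; together with the Auslander--Buchsbaum inequality $\pd X_\q\le\depth R_\q$ (valid for every $\q\supseteq\p$) and the formula $\grade\p=\inf\{\depth R_\q\mid\q\supseteq\p\}$ this gives $\pd X_\p\le\grade\p<\infty$, so the maximum defining $\phi(\X)(\p)$ exists and $\phi(\X)$ is grade consistent. For $\psi$: the membership condition of $\psi(f)$ is local, so closure under direct summands, extensions and syzygies follows from the elementary estimates $\pd(L\oplus N)=\max\{\pd L,\pd N\}$, $\pd M\le\max\{\pd L,\pd N\}$ for a short exact sequence $0\to L\to M\to N\to0$, and $\pd\syz M\le\max\{\pd M-1,0\}$, and projectives lie in $\psi(f)$ because $f\ge0$. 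For $\psi(f)\subseteq\FPD(R)$: each set $\{\p\mid\pd M_\p\le n\}$ is open, being the free locus of an $n$th syzygy of $M$; as $f$ is finite valued these sets cover the quasi-compact space $\Spec R$, so $\pd_R M$ is bounded, hence finite.

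Two of the four inclusions needed for $\psi\circ\phi=\mathrm{id}$ and $\phi\circ\psi=\mathrm{id}$ are formal: $X\in\X$ forces $\pd X_\p\le\phi(\X)(\p)$, whence $\X\subseteq\psi(\phi(\X))$, and $M\in\psi(f)$ forces $\pd M_\p\le f(\p)$, whence $\phi(\psi(f))\le f$. The substance of the theorem lies in the reverse inclusions.

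For $\psi(\phi(\X))\subseteq\X$, let $M$ satisfy $\pd M_\p\le\phi(\X)(\p)$ for all $\p$. I would first replace $\X$ by a single test object: for each $i\ge1$ the closed set $Z_i=\{\p\mid\pd M_\p\ge i\}$ has only finitely many minimal primes, each of which lies in $\NF(\syz^{\,i-1}X')$ for some $X'\in\X$ (by definition of $\phi$); since that non-free locus is closed it then contains the whole irreducible $V(\p)$ of each such minimal prime $\p$, so a finite direct sum $Y_i\in\X$ satisfies $\pd(Y_i)_\p\ge i$ whenever $\pd M_\p\ge i$. Then $X:=\bigoplus_{i=1}^{\pd_R M}Y_i\in\X$ has $\pd M_\p\le\pd X_\p$ for every $\p$, and it remains to prove the \textbf{key lemma}: if $X,M\in\FPD(R)$ and $\pd M_\p\le\pd X_\p$ for all $\p$, then $M\in\res\{R,X\}$. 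I would attack this by induction on $\pd_R M$, constructing a short exact sequence whose terms other than $M$ already lie in $\res\{R,X\}$ and which strictly decreases the induction parameter, using generators of the defining ideal of $\NF(M)$ and the hypothesis at the minimal primes of $\NF(M)$ to manufacture the required homomorphisms. Carrying out this reduction --- remaining inside the resolving closure while controlling non-free loci throughout --- is the main obstacle and the technical core of the proof.

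For $\phi(\psi(f))\ge f$, fix $\p$ and set $g=f(\p)\le\grade\p$; I must exhibit $M\in\psi(f)$ with $\pd M_\p=g$. Starting from a maximal regular sequence of length $t=\grade\p$ inside $\p$, a somewhat delicate construction (trimming away the part of the support not contained in $V(\p)$ and, if needed, passing to a perfect subquotient) produces $T\in\FPD(R)$ with $\pd_R T=t$, $\pd_{R_\p}T_\p=t$, and $\NF(T)\subseteq V(\p)$; then $M=\syz^{\,t-g}T$ has $\pd_R M=g$, $\pd_{R_\p}M_\p=g$, and $\NF(M)\subseteq V(\p)$. Grade consistency of $f$ now forces $M\in\psi(f)$: if $\q\in\NF(M)$ then $\q\supseteq\p$, so $f(\q)\ge f(\p)=g=\pd_R M\ge\pd M_\q$, while $\pd M_\q=0$ for $\q\notin\NF(M)$. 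Hence $\phi(\psi(f))(\p)\ge\pd M_\p=g$, and combined with the reverse inequality this yields $\phi(\psi(f))=f$, completing the bijection. The delicate point here --- producing $T$ with prescribed non-free locus and projective dimension --- runs parallel to the obstacle in the previous paragraph.
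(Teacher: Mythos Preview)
Your framework and well-definedness arguments are correct, and you have correctly isolated the two substantive tasks. For $\phi(\psi(f))\ge f$ your plan matches the paper's Lemma~\ref{h}: start with $R/(\xx)$ for a maximal $R$-sequence $\xx$ in $\p$, trim the non-free locus down to $V(\p)$ (the paper does this via Proposition~\ref{c}, a result quoted from \cite{radius}), and then take a syzygy. Your preliminary reduction to a single test module $X$ is also fine, though the paper's Theorem~\ref{4.6} does not need it.

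The genuine gap is in your key lemma. Your stated plan is induction on $\pd_R M$, but you give no mechanism for the inductive step, and I do not see one: knowing $\syz M\in\res X$ does not yield $M\in\res X$, and there is no evident short exact sequence placing $M$ between two modules already in $\res X$ with smaller projective dimension. The paper proceeds quite differently. After reducing to the local case (Proposition~\ref{2182101}), Theorem~\ref{4.6} inducts on $\dim\NF(M)$; the inductive step uses an approximation result (Lemma~\ref{2182108}(2)) to trade $M$ for a module with strictly smaller non-free locus, modulo terms already in $\X$. The base case $\NF(M)=\{\m\}$ is where the real substance lies, and for it the paper invokes a complete classification of the resolving subcategories of $\FPD_0(R)$ (Theorem~\ref{1.1}): over a local ring of depth $t$ they form the chain $\add R=\PD_0^0\subsetneq\cdots\subsetneq\PD_0^t=\FPD_0(R)$, with $\PD_0^n=\res(\tr\syz^{n-1}k)$. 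Proving this classification occupies all of Section~\ref{resFPD} and depends on a detailed study of the transpose $\tr$ and of the modules $\tr\syz^ik$ (Lemma~\ref{4}, Propositions~\ref{7} and~\ref{8}). None of this apparatus appears in your sketch, and the phrase ``manufacture the required homomorphisms'' does not indicate an alternative route around it.
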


\begin{rem}
Only a few days after the first version of our preprint appeared on the arXiv, another new preprint \cite{HPST} was posted, which announces a classification of resolving subcategories in $\FPD(R)$ by solving the equivalent problem of classifying tilting classes over $R$.
That classification is essentially the same as Theorem \ref{1.2}, since one can get a descending sequence of specialization closed subsets as in \cite[Definition 3.1]{HPST} by taking $Y_i = \{\p \in \Spec R \mid f(\p) \geq i\}$.
Also in \cite{AJS}, a classification was obtained for compactly generated $t$-structures in the  derived categories of $R$. These structures can be seen to contain the resolving subcategories of $\FPD(R)$. However, it would take some work to derive our classification (or the one in \cite{HPST}) from this classification. In any case, our proofs are quite different and they give rather concrete information on when and how one can build a module from another with syzygies, extensions and direct summands, see Remark \ref{concrete} and Section \ref{exa}. 
\end{rem}

For $M\in\mod R$ we denote by $\add M$ the subcategory of $\mod R$ consisting of modules isomorphic to direct summands of finite direct sums of copies of $M$.
We say that a resolving subcategory $\X$ of $\mod R$ is {\em dominant} if for every $\p\in\Spec R$ there exists $n\ge0$ such that $\syz^n\kappa(\p)\in\add\X_\p$, where $\X_\p$ denotes the subcategory of $\mod R_\p$ consisting of modules of the form $X_\p$ with $X\in\X$.
Over a Cohen-Macaulay ring, the grade consistent functions also classify the dominant resolving subcategories.

\begin{thm}\label{1.3}
Let $R$ be a Cohen-Macaulay ring.
Then one has a 1-1 correspondence
$$
\begin{CD}
\left\{
\begin{matrix}
\text{Dominant resolving subcategories}\\
\text{of $\mod R$}\\
\end{matrix}
\right\}
\begin{matrix}
@>{\phi}>>\\
@<<{\psi}<
\end{matrix}
\left\{
\begin{matrix}
\text{Grade consistent functions}\\
\text{on $\Spec R$}
\end{matrix}
\right\},
\end{CD}
$$
where $\phi,\psi$ are defined by $\phi(\X)(\p)=\height\p-\min_{X\in\X}\{\depth X_\p\}$ and $\psi(f)=\{\,M\in\mod R\mid\text{$\depth M_\p\ge\height\p-f(\p)$ for all $\p\in\Spec R$}\,\}$.
\end{thm}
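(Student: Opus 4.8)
We would deduce Theorem \ref{1.3} from Theorem \ref{1.2}. Recall three standard facts about a Cohen--Macaulay ring $R$: for every $\p\in\Spec R$ one has $\grade\p=\height\p$; for every $M\in\mod R$ one has $\depth M_\p\le\dim R_\p=\height\p$; and $\pd M_\p=\height\p-\depth M_\p$ by Auslander--Buchsbaum whenever $\pd M_\p<\infty$. Writing $\psi$ for the map of Theorem \ref{1.3}, these give at once that $\psi(f)\cap\FPD(R)$ is the resolving subcategory attached to the grade consistent function $f$ by Theorem \ref{1.2}, and that $\CM(R)\subseteq\psi(f)$ because $f\ge0$. These two identifications are the bridge we will use.

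First I would check that $\psi$ is well defined, i.e.\ that $\psi(f)$ is a dominant resolving subcategory. It contains $R$ (as $f\ge0$), is trivially closed under direct summands, is closed under extensions by the depth lemma, and is closed under syzygies because $\depth(\syz M)_\p\ge\min(\depth M_\p+1,\height\p)\ge\depth M_\p$, the last step using $\depth M_\p\le\height\p$. For dominance, fix $\p$ and set $X=\syz^n_R(R/\p)$: localizing a free resolution shows $X_\p=\syz^n\kappa(\p)$, that $X_\q$ is free for $\q\notin V(\p)$, and that $\depth X_\q\ge\min(n,\height\q)$ for $\q\in V(\p)$; hence $X\in\psi(f)$ as soon as $n\ge\height\q-f(\q)$ for every $\q\in V(\p)$ with $\height\q>n$. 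When $\dim R<\infty$ the choice $n=\dim R$ works, and the general case requires an additional argument producing a suitable $n$ (or module) at each $\p$.

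Next I would verify that $\phi$ is well defined and that $\phi\circ\psi=\mathrm{id}$. Since $R\in\X$ and depth is nonnegative, $0\le\phi(\X)(\p)\le\height\p=\grade\p$; and for $\p\subseteq\q$, picking $X\in\X$ with $\depth X_\p$ minimal and using $\depth X_\q\le\depth X_\p+\dim(R_\q/\p R_\q)$ together with $\height\p+\dim(R_\q/\p R_\q)=\height\q$ (valid in the Cohen--Macaulay local ring $R_\q$) yields $\phi(\X)(\q)\ge\phi(\X)(\p)$, so $\phi(\X)$ is grade consistent. For $\phi\psi=\mathrm{id}$: if $\X=\psi(f)$ then by definition $\min_{X\in\X}\depth X_\p\ge\height\p-f(\p)$, while the minimal depth at $\p$ over $\psi(f)\cap\FPD(R)$ equals $\height\p-f(\p)$ by Theorem \ref{1.2} and Auslander--Buchsbaum; so equality holds and $\phi(\psi(f))=f$.

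The hard part is $\psi\circ\phi=\mathrm{id}$: for dominant resolving $\X$ with $g=\phi(\X)$, the inclusion $\X\subseteq\psi(g)$ is immediate, so everything is in showing $\psi(g)\subseteq\X$. I would first match the finite-projective-dimension parts: using that $\X$ is dominant, one shows that the minimal depth at each $\p$ is already attained by a module in $\X$ of finite projective dimension, so $\X\cap\FPD(R)$ corresponds to $g$ under Theorem \ref{1.2}, and therefore $\psi(g)\cap\FPD(R)=\X\cap\FPD(R)\subseteq\X$. Then, for arbitrary $M\in\psi(g)$ (possibly of infinite projective dimension), I would run a d\'evissage along a prime filtration of $M$, as in the proof of Theorem \ref{1.2}, reducing $M\in\X$ to the statement that $\X$ contains $\syz^N(R/\p)$ for $N\gg0$ and every prime $\p$; this last statement I would derive from the dominance of $\X$ by globalizing the $R_\p$-module relation $\syz^m\kappa(\p)\in\add\X_\p$ to $R$. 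The three points I expect to resist routine treatment are this globalization, the replacement in the first step of an infinite-projective-dimension module by a finite-projective-dimension one of equal local depth, and (in the d\'evissage) descending from a high syzygy of $M$ back to $M$ itself; the remaining depth bookkeeping is routine.
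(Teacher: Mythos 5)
Your outline of the easy pieces---that $\psi(f)$ is a resolving subcategory, that $\phi$ lands in grade consistent functions, and that $\phi\psi=\mathrm{id}$ via an Auslander--Buchsbaum translation of Lemma~\ref{h}---is correct and matches the paper. But you are candid that three steps resist you, and two of them are real obstructions, not loose ends.

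First, dominance of $\psi(f)$. Your argument only works when $\dim R<\infty$, and the theorem is stated without that hypothesis. The paper's fix is nontrivial: it sets $n:=\Rfd_R(R/\p)$, which is finite by Avramov--Iyengar--Lipman, and then uses Christensen--Foxby--Frankild to conclude that $\syz^n(R/\p)$ has Cohen--Macaulay dimension zero at every prime, hence lies in $\psi(f)$. This replacement of $\dim R$ by $\Rfd_R(R/\p)$ is exactly the ``additional argument producing a suitable $n$'' you flagged, and you would not get it by elementary depth bookkeeping.

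Second, and more seriously, your plan for $\psi\phi=\mathrm{id}$ would not go through as written. A prime filtration $0=M_0\subset\cdots\subset M_n=M$ with cyclic subquotients $R/\p_i$ is the wrong tool for resolving subcategories: they are not closed under submodules or quotients, so you cannot pass membership along such a filtration. Taking high syzygies salvages part of this (since $\syz^N$ converts the filtration into extensions up to free summands), but then, as you yourself note, there is no way to descend from $\syz^NM\in\X$ back to $M\in\X$. Your intermediate claim---that $\X\cap\PD(R)$ already achieves, at each prime, the minimal depth over all of $\X$---is also stated without proof; it is genuinely where Proposition~\ref{5'} of the paper is needed. The paper's route is structurally different: Theorem~\ref{g} is proved by induction on $\dim\NF(M)$ (not a prime filtration), after reducing to the local case via Lemma~\ref{2182108} and Proposition~\ref{2182101}. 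The base case (modules locally free on the punctured spectrum) rests on Corollary~\ref{2'}, which in turn uses Proposition~\ref{5'}: a dominant resolving subcategory over a CM local ring that contains a module of depth $t$ already contains $\syz^tk$. That single fact, proved by cutting down by a carefully chosen regular sequence and appealing to a syzygy-splitting result from \cite{syz2}, is the engine of the hard direction, and it does not appear in your sketch. Also, the proof of Theorem~\ref{1.2} you cite as a template does \emph{not} use a prime filtration either; it uses the same $\dim\NF(M)$ induction, so the analogy you invoke points to a different d\'evissage than the one you describe.

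In short: the well-definedness of $\phi$ and $\phi\psi=\mathrm{id}$ are fine and agree with the paper; dominance requires the $\Rfd$ trick; and $\psi\phi=\mathrm{id}$ requires the depth-$t$-forces-$\syz^tk$ mechanism of Proposition~\ref{5'} together with the $\NF$-dimension induction of Theorem~\ref{g}, neither of which is supplied or replaceable by a prime-filtration argument.
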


For an $R$-module $M$ we denote by $\IPD(M)$ the {\em infinite projective dimension locus} of $M$, i.e., the set of prime ideals $\p$ with $\pd_{R_\p}M_\p=\infty$.
For a subcategory $\X$ of $\mod R$, set $\IPD(\X)=\bigcup_{X\in\X}\IPD(X)$.
Let $\Sing R$ denote the {\em singular locus} of $R$, that is, the set of prime ideals $\p$ of $R$ such that $R_\p$ is not a regular local ring.
Let $W$ be a subset of $\Sing R$.
We say that $W$ is {\em specialization closed} if $V(\p)\subseteq W$ for every $\p\in W$.
We denote by $\IPD^{-1}(W)$ the subcategory of $\mod R$ consisting of modules whose infinite projective dimension loci are contained in $W$.
Recall that $R$ is said to be {\em locally hypersurface} if for each $\p\in\Spec R$ the local ring $R_\p$ is a hypersurface.
The above results allow us to give complete classification of resolving subcategories when $R$ is a locally hypersurface ring. Also, by combining with Stevenson's recent classification of thick subcategories of the singularity category \cite{S}, one obtains a complete classification of resolving subcategories for complete intersections: 

\begin{thm}\label{hyper}
\begin{enumerate}[\rm(1)]
\item
Let $R$ be a locally hypersurface ring.
There is a 1-1 correspondence
$$
\begin{CD}
\left\{
\begin{matrix}
\text{Resolving subcategories}\\
\text{of $\mod R$}
\end{matrix}
\right\}
\begin{matrix}
@>{\Phi}>>\\
@<<{\Psi}<
\end{matrix}
\left\{
\begin{matrix}
\text{Specialization closed}\\
\text{subsets of $\Sing R$}
\end{matrix}
\right\}
\times
\left\{
\begin{matrix}
\text{Grade consistent}\\
\text{functions on $\Spec R$}
\end{matrix}
\right\}.
\end{CD}
$$
One defines $\Phi,\Psi$ by $\Phi(\X)=(\IPD(\X),\phi(\X))$ with $\phi(\X)(\p)=\height\p-\min_{X\in\X}\{\depth X_\p\}$ and $\Psi(W,f)=\{\,M\in\IPD^{-1}(W)\mid\text{$\depth M_\p\ge\height\p-f(\p)$ for all $\p\in\Spec R$}\,\}$.
\item
Let $R=S/(\xx)$ where $S$ is a regular  ring and $\xx=x_1,\dots,x_c$ is a regular sequence on $S$.
Set $X=\P_S^{c-1}=\Proj S[y_1,\cdots,y_c]$ and let $Y$ be the zero subscheme of $\sum_{i=1}^cx_iy_i\in\Gamma(X,\OO_X(1))$.
Then one has a 1-1 correspondence
$$
\begin{CD}
\left\{
\begin{matrix}
\text{Resolving subcategories}\\
\text{of $\mod R$}
\end{matrix}
\right\}
\begin{matrix}
@>>>\\
@<<<
\end{matrix}
\left\{
\begin{matrix}
\text{Specialization closed}\\
\text{subsets of $\Sing Y$}
\end{matrix}
\right\}
\times
\left\{
\begin{matrix}
\text{Grade consistent}\\
\text{functions on $\Spec R$}
\end{matrix}
\right\}.
\end{CD}
$$
\end{enumerate}
\end{thm}

\begin{rem}\label{concrete} Using the above Theorem, one can decide whether a finitely generated module $N$ can be ``built" from another module $M$ by looking at computable invariants. As a simple example, let $R$ be a local complete intersection and $M, N$ be modules of finite length. Then $N$ can be built from $M$ using syzygies, extensions and direct summands if and only if the support variety (see \cite{AI}) of $N$ is contained in that of $M$. Support varieties can be computed using a computer program like Macaulay 2. 

\end{rem}

For an $R$-module $M$ we denote by $\res M$ its {\em resolving closure}, i.e., the smallest resolving subcategory of $\mod R$ containing $M$.
As another application we recover and give a categorical version of a ``missing'' result by Auslander.
One says that $\Tor$-rigidity holds for $\PD(R)$ if for any module $M\in \FPD(R)$ and $N\in \mod R$, $\Tor_i^R(M,N)=0$ forces $\Tor_j^R(M,N)=0$ for $j>i$.
It is known that $\Tor$-rigidity holds for $\PD(R)$ when $R$ is regular or a quotient of an unramified regular local ring by a regular element; see \cite{Aus, rigid, Lich}.
The equivalence of (1) and (3) in the theorem below for unramified regular local rings was announced by Auslander in his 1962 ICM speech but never published (cf. \cite[Theorem 3]{Aus}):  

\begin{thm}\label{1.4}
Let $R$ be a commutative noetherian ring.
Suppose that $\Tor$-rigidity holds for $\PD(R_\p)$ for all $\p \in \Spec R$.
The following are equivalent for $M,N \in \FPD(R)$:
\begin{enumerate}[\rm(1)]
\item
$\pd M_{\p} \leq\max\{\pd N_{\p},0\}$ for all $\p \in \Spec R$.
\item
$M\in\res N$.
\item
$\Supp \Tor_i^R(M,X) \subseteq \Supp \Tor_i^R(N,X)$ for all $i>0$ and all $X\in \mod R$.
\end{enumerate}
\end{thm}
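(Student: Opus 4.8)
The plan is to establish the cyclic chain $(1)\Rightarrow(2)\Rightarrow(3)\Rightarrow(1)$; in fact $(1)$ and $(2)$ will drop out together as a direct consequence of Theorem \ref{1.2}, so the only place where the $\Tor$-rigidity hypothesis is genuinely needed is the passage $(2)\Rightarrow(3)$. Throughout I will use that for finitely generated $Y,X$ over a noetherian ring one has $\p\in\Supp\Tor_i^R(Y,X)$ if and only if $\Tor_i^{R_\p}(Y_\p,X_\p)\ne0$, and that $\pd_R Y=\sup_{\m\in\Max R}\pd_{R_\m}Y_\m$. The guiding idea is to build two auxiliary subcategories — one cut out by the inequality in $(1)$, one by the support condition in $(3)$ — verify that each is resolving, and compare them with $\res N$.

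\emph{Step 1 $(1)\Leftrightarrow(2)$.} Since $\FPD(R)$ is resolving and $N\in\FPD(R)$, we have $\res N\subseteq\FPD(R)$, so Theorem \ref{1.2} applies to it. I would set $\mathcal Y=\{\,Y\in\mod R\mid\pd Y_\p\le\pd N_\p\text{ for all }\p\in\Spec R\,\}$. The elementary facts $\pd(Y\oplus Y')_\p=\max\{\pd Y_\p,\pd Y'_\p\}$, $\pd B_\p\le\max\{\pd A_\p,\pd C_\p\}$ for $0\to A\to B\to C\to0$, and $\pd(\syz Y)_\p\le\max\{\pd Y_\p-1,0\}$ show that $\mathcal Y$ is resolving (and $\mathcal Y\subseteq\FPD(R)$, since $\pd Y_\p\le\pd_R N$ for every $\p$). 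As $N\in\mathcal Y$, we get $\res N\subseteq\mathcal Y$, so $\phi(\res N)(\p)=\max_{X\in\res N}\pd X_\p=\pd N_\p$ — the value $\pd N_\p$ is attained at $N$ and not exceeded inside $\mathcal Y$. Theorem \ref{1.2} then gives $\res N=\psi(\phi(\res N))=\mathcal Y$, whence $M\in\res N$ if and only if $\pd M_\p\le\pd N_\p$ for all $\p$, i.e. $(2)\Leftrightarrow(1)$.

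\emph{Step 2 $(2)\Rightarrow(3)$.} I would introduce $\mathcal Z=\{\,Y\in\mod R\mid\Supp\Tor_i^R(Y,X)\subseteq\Supp\Tor_i^R(N,X)\text{ for all }i>0\text{ and all }X\in\mod R\,\}$ and argue that $\mathcal Z$ is resolving. It contains the projectives; it is closed under direct summands, since $\Tor_i^R(Y',X)$ is a direct summand of $\Tor_i^R(Y,X)$ when $Y'$ is a direct summand of $Y$; and it is closed under extensions, since the long exact sequence of $\Tor$ gives $\Supp\Tor_i^R(B,X)\subseteq\Supp\Tor_i^R(A,X)\cup\Supp\Tor_i^R(C,X)$ for $0\to A\to B\to C\to0$. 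The delicate point — and the place where the hypothesis enters — is closure under syzygies. For $0\to Y'\to P\to Y\to0$ with $P$ projective and $Y\in\mathcal Z$, the long exact sequence yields $\Tor_i^R(Y',X)\cong\Tor_{i+1}^R(Y,X)$ for every $i\ge1$, so $\Supp\Tor_i^R(Y',X)=\Supp\Tor_{i+1}^R(Y,X)\subseteq\Supp\Tor_{i+1}^R(N,X)$; it then remains to show $\Supp\Tor_{i+1}^R(N,X)\subseteq\Supp\Tor_i^R(N,X)$. If $\p\in\Supp\Tor_{i+1}^R(N,X)$, then $\Tor_{i+1}^{R_\p}(N_\p,X_\p)\ne0$, and since $N_\p\in\PD(R_\p)$ and $\Tor$-rigidity holds for $\PD(R_\p)$, the vanishing $\Tor_i^{R_\p}(N_\p,X_\p)=0$ would force $\Tor_{i+1}^{R_\p}(N_\p,X_\p)=0$, a contradiction; hence $\p\in\Supp\Tor_i^R(N,X)$. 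Thus $Y'\in\mathcal Z$ and $\mathcal Z$ is resolving. As $N\in\mathcal Z$ trivially, $\res N\subseteq\mathcal Z$, and then $(2)$ gives $M\in\res N\subseteq\mathcal Z$, which is exactly $(3)$.

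\emph{Step 3 $(3)\Rightarrow(1)$, and the main obstacle.} I would argue by contraposition: if $(1)$ fails, pick $\p$ with $\pd M_\p>\pd N_\p$ and set $d=\pd N_\p$ (finite, as $N\in\FPD(R)$). Taking $X=R/\p$, so $X_\p=\kappa(\p)$, and $i=d+1$, one has $\Tor_{d+1}^{R_\p}(N_\p,\kappa(\p))=0$ since $\pd_{R_\p}N_\p=d$, while $\Tor_{d+1}^{R_\p}(M_\p,\kappa(\p))\ne0$ since $d+1\le\pd_{R_\p}M_\p$; equivalently $\p\in\Supp\Tor_{d+1}^R(M,R/\p)\setminus\Supp\Tor_{d+1}^R(N,R/\p)$, contradicting $(3)$. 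This closes the cycle. I expect the real obstacle to be the syzygy-closure step for $\mathcal Z$ in Step 2: passing to a syzygy shifts homological degree, so controlling $\Tor_i^R(Y',X)$ in terms of $\Tor_i^R(N,X)$ forces a comparison of $\Tor$ in consecutive degrees, and the only mechanism that makes non-vanishing of $\Tor$ monotone in the degree is precisely $\Tor$-rigidity. That is exactly why the hypothesis appears and cannot obviously be dropped; the other two implications are essentially formal once Theorem \ref{1.2} is available.
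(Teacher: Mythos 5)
Your argument is correct and follows the paper's proof in all essentials: $(1)\Rightarrow(2)$ rests on Theorem \ref{4.6} (reached here via Theorem \ref{1.2}), $(2)\Rightarrow(3)$ uses $\Tor$-rigidity for $N$ at each prime to show that the support condition cuts out a resolving subcategory containing $N$, and $(3)\Rightarrow(1)$ takes $X=R/\p$ and localizes. The only divergence is cosmetic: you additionally establish $(2)\Rightarrow(1)$ directly via the auxiliary class $\mathcal{Y}$, which is redundant once the cycle closes (and in fact $\mathcal{Y}$ as literally written need not contain $R$ when $\Supp N\ne\Spec R$, a degenerate case the paper sidesteps by only proving $(1)\Rightarrow(2)$ and closing the loop through $(3)$).
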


Since the proofs are somewhat technical, we provide a short summary of our approach to the main results stated above.
To prove Theorem \ref{1.2} we started by considering a special but crucial case, namely to classify resolving subcategories which consist of modules of finite projective dimension and that are locally free on the punctured spectrum of $R$.
This is achieved in Section \ref{resFPD} (Theorem \ref{1.1}).
Here we are actually able to describe these subcategories of $\FPD(R)$ as the smallest extension-closed subcategories containing the transposes of certain syzygies of the residue field.

In Section \ref{rsomofpd} we recall or establish various technical reductive tools, such as how to approximate a module in some resolving subcategory by a module with smaller nonfree locus and how resolving subcategories localize (Lemma \ref{2182108} and Proposition \ref{2182101}).
We also prove Theorem \ref{1.4} in this section.

Section \ref{drsoacr} deals with dominant subcategories to prepare for Theorem \ref{1.3}.
Here a key point, Proposition \ref{5'}, is that any dominant subcategory containing a module of depth $t$ also contains a $t$-th syzygy of the residue field.
To show the main result of the section, Theorem \ref{g}, we also reduce to the case of modules locally free on the punctured spectrum.

The ingredients were put together in Section \ref{mainproofs} to give proofs of Theorems \ref{1.2} and \ref{1.3}.
Several examples showing importance of our results are given in Section \ref{exa}, and in the final section, Section \ref{reghyper}, the previous results are applied to prove Theorem \ref{hyper}.
In Theorem \ref{lci} we also prove for a locally complete intersection ring $R$, a quite general result giving precise connections between resolving subcategories of $\mod R$ and ones contained in $\CM(R)$ and $\PD(R)$.
This ``glueing'' result shows that for such rings one only need to classify resolving categories in $\CM(R)$ and $\PD(R)$ to get a full classification for $\mod R$.

\begin{conv}
For an $R$-module $M$, we denote its $n$-th syzygy by $\syz^nM$, and its transpose by $\tr M$.
(Recall that if $P_1\xrightarrow{\partial}P_0\to M\to0$ is a projective presentation of $M$, then $\tr M$ is defined to be the cokernel of the $R$-dual map of $\partial$.)
Whenever $R$ is local, we define them by using a {\em minimal} free resolution of $M$, so that they are uniquely determined up to isomorphism.
The depth of the zero $R$-module is $\infty$ as a convention.
\end{conv}

\section{Resolving subcategories in $\FPD_0(R)$}\label{resFPD}

Throughout this section, let $(R,\m,k)$ be a local ring.
An {\em extension-closed} subcategory of $\mod R$ is defined as a subcategory of $\mod R$ which is closed under direct summands and extensions.
For an $R$-module $M$ we denote by $\ext M$ the {\em extension closure} of $M$, that is, the smallest extension-closed subcategory of $\mod R$ containing $M$.
We denote by $\FPD_0(R)$ the subcategory of $\PD(R)$ consisting of modules that are locally free on the {\em punctured spectrum} $\Spec R\setminus\{\m\}$.
For an integer $n\ge0$, we denote by $\PD_0^n(R)$ the subcategory of $\mod R$ consisting of modules which are of projective dimension at most $n$ and locally free on the punctured spectrum of $R$.
This section is devoted to showing the theorem below that is an important step in proving Theorem \ref{1.2}.

\begin{thm}\label{1.1}
Let $R$ be a commutative noetherian local ring of depth $t$ and with residue field $k$.
Then one has a filtration
$$
\add R=\PD_0^0(R)\subsetneq\PD_0^1(R)\subsetneq\cdots\subsetneq\PD_0^t(R)=\FPD_0(R)
$$
of resolving subcategories of $\mod R$, and these are all the resolving subcategories contained in $\FPD_0(R)$.
Moreover, when $t>0$, for each integer $1\le n\le t$ one has
$$
\PD_0^n(R)=\res(\tr\syz^{n-1}k)=\ext(R\oplus\textstyle\bigoplus_{i=0}^{n-1}\tr\syz^ik).
$$
\end{thm}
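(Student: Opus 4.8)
The plan is to reduce the whole statement to the single claim
\[
(\ast)\qquad\text{every }M\in\FPD_0(R)\text{ with }\pd M=n\ge1\text{ has }\res(M)=\PD_0^n(R),
\]
after disposing of the ``soft'' facts directly from the minimal free resolution of $k$. First, each $\PD_0^n(R)$ is a resolving subcategory: closure under summands is clear, closure under extensions is the inequality $\pd(-)\le\max$ on a short exact sequence, and in $0\to\syz M\to F\to M\to0$ with $F$ free, localizing at any prime $\p\ne\m$ splits the sequence (since $M_\p$ is then free), so $\syz$ preserves finiteness of projective dimension and local freeness on the punctured spectrum while lowering $\pd$ by at most one. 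The endpoints come from Auslander--Buchsbaum: $\pd M=0$ iff $M$ is free, so $\PD_0^0(R)=\add R$, and any $M$ of finite projective dimension has $\pd M=\depth R-\depth M\le t$, so $\PD_0^t(R)=\FPD_0(R)$.

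Next I would analyze $N:=\tr\syz^{n-1}k$ for $1\le n\le t$. As $\depth R=t$ forces $\Ext_R^i(k,R)=0$ for $i<t$, the $R$-dual $0\to F_0^{*}\to F_1^{*}\to\cdots$ of a minimal free resolution $F_\bullet$ of $k$ is exact at $F_0^{*},\dots,F_{n-1}^{*}$; since $N=\operatorname{coker}(F_{n-1}^{*}\to F_n^{*})$, this gives $0\to F_0^{*}\to\cdots\to F_n^{*}\to N\to0$ as a minimal free resolution, so $\pd N=n$ and (dualizing back, a piece of $F_\bullet$) $\Ext_R^i(N,R)=0$ for $0\le i<n$. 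Localizing at $\p\ne\m$, where $F_\bullet$ is split exact, shows $N$ is free on the punctured spectrum, so $N\in\PD_0^n(R)\setminus\PD_0^{n-1}(R)$ and the filtration is strict. Reading off first syzygies gives stable isomorphisms $\syz(\tr\syz^i k)\cong\tr\syz^{i-1}k$ for $1\le i\le t-1$, with $\syz(\tr k)$ free, so the horseshoe lemma makes $\ext(R\oplus\bigoplus_{i=0}^{n-1}\tr\syz^i k)$ closed under syzygies, hence resolving; since it contains $\tr\syz^{n-1}k$ it contains $\res(\tr\syz^{n-1}k)$, and since the syzygy relations place every $\tr\syz^i k\ (i<n)$ in the resolving subcategory $\res(\tr\syz^{n-1}k)$ the reverse inclusion holds, so $\res(\tr\syz^{n-1}k)=\ext(R\oplus\bigoplus_{i=0}^{n-1}\tr\syz^i k)\subseteq\PD_0^n(R)$.

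Granting $(\ast)$, the theorem follows. Applied to $M=\tr\syz^{n-1}k$, which lies in $\FPD_0(R)$ with $\pd=n$, it gives $\res(\tr\syz^{n-1}k)=\PD_0^n(R)$; combined with the previous paragraph this yields the asserted equalities $\PD_0^n(R)=\res(\tr\syz^{n-1}k)=\ext(R\oplus\bigoplus_{i=0}^{n-1}\tr\syz^i k)$. For the completeness statement, let $\X\subseteq\FPD_0(R)$ be any resolving subcategory, put $n=\sup_{X\in\X}\pd X$ (which is $\le t$), and pick $X_0\in\X$ with $\pd X_0=n$ (if $n\ge1$); then $(\ast)$ gives $\PD_0^n(R)=\res(X_0)\subseteq\X\subseteq\PD_0^n(R)$, so $\X=\PD_0^n(R)$, while $\X=\add R$ if $n=0$.

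It remains to prove $(\ast)$, the heart of the matter, which I would do by induction on $n$. The inclusion $\res(M)\subseteq\PD_0^n(R)$ is automatic; for the reverse, the inductive hypothesis applied to $\syz M$ gives $\PD_0^{n-1}(R)=\res(\syz M)\subseteq\res(M)$, so — since $\PD_0^n(R)$ consists of $\PD_0^{n-1}(R)$ together with the modules of projective dimension exactly $n$ — it suffices to show that any two $L,M\in\FPD_0(R)$ with $\pd L=\pd M=n$ satisfy $L\in\res(M)$. For this I would run a secondary induction on $\sum_{i\ge1}\ell_R(\Ext_R^i(L,R))+\sum_{i\ge1}\ell_R(\Ext_R^i(M,R))$, which is finite because $L$ and $M$ are free on the punctured spectrum. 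In the minimal case both $L$ and $M$ have the $\Ext$-profile $\Ext_R^i(-,R)=0\ (1\le i<n)$, $\Ext_R^n(-,R)\cong k$ of $\tr\syz^{n-1}k$ found above, and using the transpose duality $\underline{\Hom}_R(A,B)\cong\underline{\Hom}_R(\tr B,\tr A)$, the isomorphism $\tr\tr A\cong A$ modulo free summands, and the vanishing $\Ext_R^i(\tr\syz^j k,R)=0$ for $i<j+1$ (with a pushout along the common syzygy $\tr k$ when $n\ge2$, which splits off a free module by that vanishing), one identifies both $L$ and $M$ with $\tr\syz^{n-1}k$ modulo free summands, so $L\cong M$ and $L\in\res(M)$. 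In the inductive step one peels a composition factor $k$ off one of the $\Ext_R^i(-,R)$ and constructs a short exact sequence realizing $L$ (up to a free summand) as its middle term, whose sub- and quotient-modules lie in $\PD_0^{n-1}(R)\subseteq\res(M)$ or have strictly smaller invariant, so the two inductions place $L$ in $\res(M)$. The main obstacle is precisely this construction: because $\res(-)$ is closed under neither cosyzygies nor transposes, a pd-$n$ generator cannot be moved through $M$ ``one syzygy at a time'', so the short exact sequences must be arranged so that both their kernels and cokernels satisfy the inductive hypotheses — this, together with the base-case rigidity recovering a module from its $\Ext$-profile, is where the real work lies.
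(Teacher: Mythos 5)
Your reduction of the theorem to the single statement $(\ast)$ is correct, and the ``soft'' facts you verify along the way (each $\PD_0^n(R)$ is resolving, the Auslander--Buchsbaum endpoints, the minimal free resolution of $\tr\syz^{n-1}k$ obtained by dualizing $F_\bullet$, the syzygy relations $\syz(\tr\syz^i k)\cong\tr\syz^{i-1}k$, and the identification of the extension closure with the resolving closure) all check out and are close to what the paper does (cf.\ its Lemmas~\ref{4} and~\ref{9}). In fact $(\ast)$ is precisely Corollary~\ref{219928} of the paper, so you have correctly located the heart of the matter.

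The difficulty is that you have not actually proved $(\ast)$, and you say so yourself: the step that ``peels a composition factor $k$ off one of the $\Ext_R^i(-,R)$ and constructs a short exact sequence realizing $L$ (up to a free summand) as its middle term'' is precisely the missing technical content. To make your secondary induction run, you would need to produce, for a given $L\in\FPD_0(R)$ of projective dimension $n$ with some $\Ext^i(L,R)\ne0$ for $i<n$, an explicit short exact sequence whose middle term is $L$ up to a free summand, whose quotient lies in $\PD_0^{n-1}(R)$ and whose kernel still has projective dimension $n$ but strictly smaller total Ext-length. This is exactly what the paper's Proposition~\ref{7} supplies: iterating Proposition~\ref{1}(5) applied to $\syz^iM$ gives exact sequences
$$
0 \to \tr\syz^{i+1}\tr\syz^{i+1}M \to \tr\syz^i\tr\syz^iM\oplus R^{\oplus n_i} \to \tr\syz^i\Ext^{i+1}(M,R) \to 0,
$$
starting from $\tr\syz^0\tr\syz^0M\cong M$ (modulo free summands), with the finite-length pieces $\Ext^{i+1}(M,R)$ handled by Proposition~\ref{8}. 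Without some version of this construction your inductive step is an empty shell. Also, while your base-case rigidity claim (that the Ext-profile $\Ext^i=0$ for $i<n$, $\Ext^n\cong k$ pins down $L$ as $\tr\syz^{n-1}k$ modulo free summands) is in fact true, the route you sketch through $L^*$ and double duals is shaky since $L$ need not be reflexive; the clean way is to iterate Proposition~\ref{1}(5) to get $\tr L\cong\syz^{n-1}\tr\syz^{n-1}L\cong\syz^{n-1}k$ and then apply $\tr$ once more. Finally, note that the paper sidesteps the rigidity question entirely: rather than compare two modules of the same projective dimension, it shows directly by the telescoping argument that $\tr\syz^{n-1}k\in\res X$ for any $X\in\PD_0^n(R)\setminus\PD_0^{n-1}(R)$, and then reads off the classification. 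Your framing is not wrong, but the genuinely hard piece---the approximation exact sequence of Proposition~\ref{7}---is absent.
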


The key is a detailed inspection of syzygies and transposes.
We begin with stating several basic properties of syzygies and transposes; the following proposition will be used basically without reference.

\begin{prop}\label{1}
For an $R$-module $M$ the following hold.
\begin{enumerate}[\rm(1)]
\item
Let $0\to L\to M\to N\to0$ be an exact sequence of $R$-modules.
Then one has exact sequences:
\begin{align*}
& 0\to\syz N\to L\oplus R^{\oplus a}\to M\to0,\\
& 0\to\syz L\to\syz M\oplus R^{\oplus b}\to\syz N\to0,\\
& 0\to N^\ast\to M^\ast\to L^\ast\to\tr N\to\tr M\oplus R^{\oplus c}\to\tr L\to0
\end{align*}
for some $a,b,c\ge0$.
\item
There are isomorphisms $(\tr M)^\ast\cong\syz^2M$ and $M^\ast\cong\syz^2\tr M\oplus R^{\oplus n}$ for some $n\ge0$.
\item
The module $\tr M$ has no nonzero free summand.
\item
One has an isomorphism $M\cong\tr\tr M\oplus R^{\oplus n}$ for some $n\ge0$.
Hence $\tr\tr M$ is isomorphic to a maximal direct summand of $M$ without nonzero free summand.
\item
There is an exact sequence
$$
0 \to \Ext^1(M,R) \to \tr M \xrightarrow{f} (\syz^2M)^\ast \to \Ext^2(M,R) \to 0$$
of $R$-modules with $\Im f\cong\syz\tr\syz M$.
\end{enumerate}
\end{prop}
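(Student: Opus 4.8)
The final statement to prove is Proposition~\ref{1}, a collection of five standard facts about syzygies and transposes in the sense of Auslander--Bridger.

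\textbf{Overall approach.} The plan is to work throughout with a (minimal, since $R$ is local) free presentation $R^{\oplus n_1}\xrightarrow{d}R^{\oplus n_0}\to M\to0$ and its dual, recalling that $\tr M=\operatorname{coker}(d^\ast)$, so that by definition there is an exact sequence $0\to M^\ast\to R^{\oplus n_0}\xrightarrow{d^\ast}R^{\oplus n_1}\to\tr M\to0$. Each part then reduces to diagram chasing, the horseshoe lemma, and comparison of presentations; I would cite \cite{AB} for the foundational identities and only spell out the sequence-theoretic parts that are used repeatedly in the section.

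\textbf{Steps, in order.} First, for part (1): the sequence $0\to\syz N\to L\oplus R^{\oplus a}\to M\to0$ comes from choosing a surjection $R^{\oplus a}\twoheadrightarrow N$, forming the pullback along $M\twoheadrightarrow N$, and identifying the kernel as $\syz N$; the sequence for syzygies of $L,M,N$ is the horseshoe lemma applied to $0\to L\to M\to N\to0$ (up to free summands, since minimal resolutions are used); the six-term sequence in the duals and transposes is obtained by applying $\Hom(-,R)$ to a horseshoe-type commutative diagram of free presentations of $L,M,N$ and reading off the cokernel complex — this is the one place I would draw the $3\times3$ diagram explicitly. Second, for part (2): dualizing $0\to M^\ast\to R^{\oplus n_0}\to R^{\oplus n_1}\to\tr M\to0$ and using that $R^{\oplus n_i}$ are reflexive gives $(\tr M)^\ast\cong\syz^2 M$ directly from the definition of the second syzygy of $M$ via that presentation; applying the same identity to $\tr M$ in place of $M$ and invoking part (4) gives $M^\ast\cong\syz^2\tr M\oplus R^{\oplus n}$. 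Third, part (3) is immediate from minimality: $\tr M=\operatorname{coker}(d^\ast)$ with $d^\ast$ having entries in $\m$, so $\tr M$ has no free summand. Fourth, part (4): $\tr\tr M$ is the cokernel of the transpose of a minimal presentation of $\tr M$; since the chosen presentation of $\tr M$ is $R^{\oplus n_0}\xrightarrow{d^\ast}R^{\oplus n_1}\to\tr M\to0$, dualizing back recovers the original presentation of $M$ up to splitting off the (possible) free summand of $M$, giving $M\cong\tr\tr M\oplus R^{\oplus n}$, and combining with part (3) identifies $\tr\tr M$ as the maximal free-summand-free part of $M$. Fifth, part (5): start from $0\to M^\ast\to R^{\oplus n_0}\xrightarrow{d^\ast}R^{\oplus n_1}\to\tr M\to0$, break it into $0\to M^\ast\to R^{\oplus n_0}\to\Im d^\ast\to0$ and $0\to\Im d^\ast\to R^{\oplus n_1}\to\tr M\to0$, and dualize; computing $\Ext^1(M,R),\Ext^2(M,R)$ from the same complex and identifying $(\Im d^\ast)^\ast$ with $(\syz^2 M)^\ast$ after comparison yields the four-term sequence, and the description $\Im f\cong\syz\tr\syz M$ follows by applying part (2)'s identity $(\tr(\syz M))^\ast\cong\syz^2(\syz M)=\syz^3 M$ together with the definition of the first syzygy of $\tr\syz M$.

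\textbf{Main obstacle.} The routine-but-delicate point is bookkeeping of free summands: with minimal resolutions the horseshoe lemma and the duality identities hold on the nose only up to adding or splitting off finite free modules, so the $a,b,c,n$ in the statement are genuinely necessary, and the argument in part (1)'s six-term sequence must be arranged so that the free ambiguity lands exactly on the $\tr M$ term (accounting for the $R^{\oplus c}$). Keeping the minimal-versus-arbitrary-resolution distinction straight while chasing the $3\times3$ diagram in part (1) is where I would be most careful; everything else is a direct unwinding of the definition of $\tr$ from \cite{AB}.
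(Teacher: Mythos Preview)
Parts (1)--(4) of your sketch are sound and parallel the paper's treatment; the paper simply cites references (\cite{AB} for the six-term sequence in (1), \cite{arg} for (3)) where you propose direct arguments from the minimal presentation. One small caution on (3): the implication ``$d^\ast$ has entries in $\m$, hence $\operatorname{coker}(d^\ast)$ has no free summand'' is false in isolation (e.g.\ $d^\ast=\left(\begin{smallmatrix}x\\0\end{smallmatrix}\right):R\to R^2$). You also need minimality one step further, namely $\ker d_1=\syz^2 M\subseteq\m F_1$, so that a free summand of $\tr M$ would force a unit column in $d_1$ after a change of basis, contradicting minimality of the resolution of $M$.

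Your outline for (5) is where the proposal goes astray. Dualizing the two halves of $0\to M^\ast\to F_0^\ast\xrightarrow{d_1^\ast}F_1^\ast\to\tr M\to0$ produces $(\tr M)^\ast$, $(\Im d_1^\ast)^\ast$, and $M^{\ast\ast}$, and there is no natural identification of $(\Im d_1^\ast)^\ast$ with $(\syz^2M)^\ast$. The four-term sequence in (5) comes instead from dualizing $0\to\syz^2M\to F_1\to\syz M\to0$ and $0\to\syz M\to F_0\to M\to0$: the first yields a map $F_1^\ast\to(\syz^2M)^\ast$ with cokernel $\Ext^2(M,R)$, which factors through $\tr M$ to define $f$, and the second identifies $\ker f$ with $\Ext^1(M,R)$. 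For $\Im f$ one then observes that under the inclusion $(\syz^2M)^\ast\hookrightarrow F_2^\ast$ the image of $f$ is exactly $\Im(d_2^\ast)=\syz\tr\syz M$, since $F_1^\ast\xrightarrow{d_2^\ast}F_2^\ast\to\tr\syz M\to0$ is minimal; the identity $(\tr\syz M)^\ast\cong\syz^3M$ you invoke plays no role here. The paper takes a different route for (5): it quotes \cite[(2.6)(a) and Appendix]{AB} to obtain the sequence and an isomorphism $\Im f\cong\syz\tr\syz M\oplus R^{\oplus n}$ only up to a free summand, and then uses (3) together with the surjection $\tr M\twoheadrightarrow\Im f$ to force $n=0$---an extra step your minimal-resolution approach would have avoided had it been set up correctly.
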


\begin{proof}
(1) The first sequence is a consequence of a pullback diagram made by $0\to L\to M\to N\to0$ and $0\to\syz N\to R^{\oplus a}\to N \to 0$.
The second and third sequences are obtaind by the horseshoe and snake lemmas from the original sequence.

(2) These isomorphisms are obtained easily by definition.

(3) This statement follows from (the proof of) \cite[Lemma 4.2]{arg}.

(4) The former assertion is straightforward.
The latter is by (3).

(5) Applying \cite[Proposition (2.6)(a)]{AB} to $\tr M$ and using the first isomorphism in (2), we get such an exact sequence.
It follows from \cite[Appendix]{AB} that the image of $f$ is isomorphic to $\syz\tr\syz M\oplus R^{\oplus n}$ for some $n\ge0$.
We have surjections $\tr M\to\syz\tr\syz M\oplus R^{\oplus n}\to R^{\oplus n}$, which shows that $\tr M$ has a free summand isomorphic to $R^{\oplus n}$.
By (3) we have $n=0$.
\end{proof}

\begin{lem}\label{2}
Let $0\to L\to M\to N\to 0$ be an exact sequence of $R$-modules.
Let $n\ge0$ be an integer such that $\Ext^n(L,R)=0$.
Then there is an exact sequence
$$
0 \to \tr\syz^nN \to \tr\syz^nM\oplus R^{\oplus m} \to \tr\syz^nL \to 0.
$$
\end{lem}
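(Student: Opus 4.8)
The plan is to reduce the statement to the three exact sequences supplied by Proposition~\ref{1}(1), together with the fact that $\tr$ annihilates free modules. First I would iterate the second of those sequences: applied to $0\to L\to M\to N\to0$ it gives $0\to\syz L\to\syz M\oplus R^{\oplus b_1}\to\syz N\to0$, and feeding this back in repeatedly, while using that syzygies of free modules vanish in a minimal free resolution over the local ring $R$, yields for every $i\ge0$ a short exact sequence
$$
0\to\syz^iL\to\syz^iM\oplus R^{\oplus b_i}\to\syz^iN\to0
$$
for suitable $b_i\ge0$. Taking $i=n$ and applying the third exact sequence of Proposition~\ref{1}(1) to it --- then simplifying via $\tr(R^{\oplus b_n})=0$, $(R^{\oplus b_n})^\ast\cong R^{\oplus b_n}$ and $\tr(\syz^nM\oplus R^{\oplus b_n})\cong\tr\syz^nM$ --- produces a six-term exact sequence
$$
0\to(\syz^nN)^\ast\to(\syz^nM)^\ast\oplus R^{\oplus b_n}\xrightarrow{\rho}(\syz^nL)^\ast\xrightarrow{\delta}\tr\syz^nN\to\tr\syz^nM\oplus R^{\oplus m}\to\tr\syz^nL\to0 .
$$
The desired exact sequence is exactly the right-hand half of this one, so the whole proof comes down to showing $\delta=0$, i.e.\ that $\rho$ is surjective.

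To see this I would bring in the hypothesis $\Ext^n(L,R)=0$. Dualizing the displayed syzygy sequence with $i=n$ into $R$ gives a long exact sequence containing
$$
(\syz^nM)^\ast\oplus R^{\oplus b_n}\xrightarrow{\rho}(\syz^nL)^\ast\to\Ext^1(\syz^nN,R)\to\Ext^1(\syz^nM,R),
$$
and a diagram chase through it shows that $\rho$ is surjective if and only if the map $\Ext^1(\syz^nN,R)\to\Ext^1(\syz^nM,R)$ is injective. Since this last sequence is $\syz^n$ applied to $0\to L\to M\to N\to0$, that map, under the canonical isomorphisms $\Ext^1(\syz^n(-),R)\cong\Ext^{n+1}(-,R)$, is the map $\Ext^{n+1}(N,R)\to\Ext^{n+1}(M,R)$ occurring in the long exact $\Ext(-,R)$-sequence of $0\to L\to M\to N\to0$. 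In that sequence this map is immediately preceded by $\Ext^n(L,R)$, which vanishes by hypothesis, so it is injective. Hence $\rho$ is surjective, $\delta=0$, and the right-hand half of the six-term sequence is the asserted short exact sequence (with $m$ the free rank that emerges from Proposition~\ref{1}(1)).

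The only step requiring genuine care is the compatibility invoked in the second paragraph: that dualizing the $\syz^n$-shifted short exact sequence recovers, after the standard dimension-shift isomorphisms, the long exact $\Ext(-,R)$-sequence of the original short exact sequence, with the connecting homomorphisms matching up. This is naturality of dimension shifting; I would either cite a standard reference for it or verify it directly by choosing compatible (horseshoe) presentations of $0\to L\to M\to N\to0$, assembling the resulting diagram of minimal free resolutions, and dualizing --- routine but mildly tedious. Everything else is a direct application of Proposition~\ref{1}(1) and the vanishing of $\tr$ on free modules.
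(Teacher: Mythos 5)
Your proof is correct and follows essentially the same route as the paper's: both build the shifted short exact sequence $0\to\syz^nL\to\syz^nM\oplus R^{\oplus l}\to\syz^nN\to0$, apply Proposition~\ref{1}(1) to obtain the six-term transpose sequence, and then show the connecting map into $\tr\syz^nN$ vanishes by observing that $\Ext^{n+1}(N,R)\to\Ext^{n+1}(M,R)$ is injective since it is preceded by $\Ext^n(L,R)=0$ in the long exact $\Ext$-sequence of the original triple. The one place you are more explicit than the paper is in flagging the compatibility, via dimension shifting, between the connecting maps in the $\Ext$-sequences of the original and of the $\syz^n$-shifted short exact sequences; the paper uses the same compatibility silently by denoting both maps $\delta$.
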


\begin{proof}
We have an exact sequence $0\to\syz^nL\to\syz^nM\oplus R^{\oplus l}\to\syz^nN\to0$.
The following three exact sequences are induced:
\begin{align*}
& (\syz^nM\oplus R^{\oplus l})^\ast\xrightarrow{\alpha}(\syz^nL)^\ast\xrightarrow{\beta}\tr\syz^nN\to\tr\syz^nM\oplus R^{\oplus m}\to\tr\syz^nL\to0,\\
& (\syz^nM\oplus R^{\oplus l})^\ast\xrightarrow{\alpha}(\syz^nL)^\ast\xrightarrow{\gamma}\Ext^{n+1}(N,R)\xrightarrow{\delta}\Ext^{n+1}(M,R),\\
& 0=\Ext^n(L,R)\to\Ext^{n+1}(N,R)\xrightarrow{\delta}\Ext^{n+1}(M,R).
\end{align*}
Hence we observe: $\delta$ is injective, $\gamma$ is zero, $\alpha$ is surjective, and $\beta$ is zero.
\end{proof}

\begin{prop}\label{7}
Assume $\depth R=t>0$.
Let $M$ be an $R$-module which is locally free on the punctured spectrum of $R$.
Then for each integer $0\le i<t$, there exists an exact sequence
$$
0 \to \tr\syz^{i+1}\tr\syz^{i+1}M \to \tr\syz^i\tr\syz^iM\oplus R^{\oplus n_i} \to \tr\syz^i\Ext^{i+1}(M,R) \to 0.
$$
\end{prop}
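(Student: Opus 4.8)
The plan is to obtain the desired sequence by a dimension shift from Proposition \ref{1}(5) and Lemma \ref{2}. Fix an integer $0\le i<t$ and put $L=\syz^iM$; since syzygies and transposes are computed from minimal free resolutions, truncation preserves minimality, so $\syz(\syz^iM)=\syz^{i+1}M$, whence $\tr\syz(\syz^iM)=\tr\syz^{i+1}M$ and $\syz\tr\syz(\syz^iM)=\syz\tr\syz^{i+1}M$. Applying Proposition \ref{1}(5) to $L$, and using the isomorphism $\Im f\cong\syz\tr\syz(\syz^iM)$ recorded there, we get a short exact sequence
$$
0\to\Ext^1(\syz^iM,R)\to\tr\syz^iM\to\syz\tr\syz^{i+1}M\to0.
$$

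Next I would feed this short exact sequence into Lemma \ref{2} with $n=i$. Granting its hypothesis for the moment, Lemma \ref{2} yields an exact sequence
$$
0\to\tr\syz^i(\syz\tr\syz^{i+1}M)\to\tr\syz^i(\tr\syz^iM)\oplus R^{\oplus n_i}\to\tr\syz^i\bigl(\Ext^1(\syz^iM,R)\bigr)\to0,
$$
and the identifications $\syz^i(\syz\tr\syz^{i+1}M)=\syz^{i+1}\tr\syz^{i+1}M$ together with the ordinary dimension-shift isomorphism $\Ext^1(\syz^iM,R)\cong\Ext^{i+1}(M,R)$ turn this into precisely the asserted sequence. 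So the whole proof reduces to checking the hypothesis of Lemma \ref{2}, namely $\Ext^i\bigl(\Ext^1(\syz^iM,R),R\bigr)=0$.

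This is exactly where the assumption that $M$ is locally free on the punctured spectrum enters, together with $i<t=\depth R$. Localizing an exact sequence $0\to\syz N\to R^{\oplus b}\to N\to0$ at a prime $\p\neq\m$ with $N_\p$ free shows $(\syz N)_\p$ is free, so by induction each $\syz^iM$ is again locally free on the punctured spectrum. Hence $\Ext^1(\syz^iM,R)_\p\cong\Ext^1_{R_\p}((\syz^iM)_\p,R_\p)=0$ for all $\p\neq\m$, so $E:=\Ext^1(\syz^iM,R)$ is a finitely generated module supported only at $\m$, i.e.\ of finite length. The radical of its annihilator is therefore $\m$, so $\grade(E,R)=\grade(\m,R)=\depth R=t$; since $i<t$ we conclude $\Ext^i(E,R)=0$. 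The only real work is this bookkeeping of indices under the minimal-resolution conventions plus the finite-length vanishing; there is no essential obstacle, since the homological content is already packaged in Proposition \ref{1}(5) and Lemma \ref{2}.
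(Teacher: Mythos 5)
Your proof is correct and follows essentially the same route as the paper: apply Proposition~\ref{1}(5) to $\syz^iM$, observe that $\Ext^{i+1}(M,R)\cong\Ext^1(\syz^iM,R)$ has finite length because $M$ is locally free on the punctured spectrum, use $i<t=\depth R$ to get the required $\Ext$-vanishing, and then invoke Lemma~\ref{2} with $n=i$. The paper simply writes the kernel term directly as $\Ext^{i+1}(M,R)$ rather than $\Ext^1(\syz^iM,R)$, but this is the same dimension shift you carry out.
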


\begin{proof}
Applying Proposition \ref{1}(5) to $\syz^iM$, we have an exact sequence $0 \to \Ext^{i+1}(M,R) \to \tr\syz^iM \to \syz\tr\syz^{i+1}M \to 0$.
The assumption implies that $\Ext^{i+1}(M,R)$ has finite length.
Since $i<t$, we have $\Ext^i(\Ext^{i+1}(M,R),R)=0$.
The assertion now follows from Lemma \ref{2}.
\end{proof}

\begin{lem}\label{4}
Assume $\depth R=t>0$.
Let $L\ne0$ be an $R$-module of finite length, and let $0\le n<t$ be an integer.
Take a minimal free resolution $\cdots\xrightarrow{d_2}F_1\xrightarrow{d_1}F_0\to L\to0$ of $L$.
Then one has an exact sequence
$$
0 \to F_0^\ast \xrightarrow{d_1^\ast} \cdots \xrightarrow{d_{n-1}^\ast} F_{n-1}^\ast \xrightarrow{d_n^\ast} F_n^\ast \xrightarrow{d_{n+1}^\ast} F_{n+1}^\ast \to \tr\syz^nL \to 0,
$$
which gives a minimal free resolution of $\tr\syz^nL$.
Hence,
$$
\syz^i\tr\syz^nL\cong
\begin{cases}
\tr\syz^{n-i}L & (0\le i\le n),\\
F_0^\ast & (i=n+1),\\
0 & (i\ge n+2).
\end{cases}
$$
In particular, $\pd(\tr\syz^nL)=n+1$.
\end{lem}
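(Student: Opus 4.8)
The plan is to produce the claimed resolution of $\tr\syz^nL$ simply by dualizing a minimal free resolution of $L$ and checking that it stays exact and minimal.

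First I would observe that, since $\cdots\to F_1\xrightarrow{d_1}F_0\to L\to0$ is minimal, its truncation $F_{n+1}\xrightarrow{d_{n+1}}F_n\to\syz^nL\to0$ is a minimal free presentation of $\syz^nL$, so by definition of the transpose $\tr\syz^nL=\operatorname{Coker}(d_{n+1}^{\ast})$. Hence the complex
$$
0\to F_0^\ast\xrightarrow{d_1^\ast}F_1^\ast\xrightarrow{d_2^\ast}\cdots\xrightarrow{d_n^\ast}F_n^\ast\xrightarrow{d_{n+1}^\ast}F_{n+1}^\ast\to\tr\syz^nL\to0
$$
is exact at $F_{n+1}^\ast$ and at $\tr\syz^nL$ by construction; all the content lies in exactness at $F_0^\ast,F_1^\ast,\dots,F_n^\ast$.

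The key step is to identify the cohomology of the truncated dual complex $0\to F_0^\ast\to F_1^\ast\to\cdots$ at the term $F_i^\ast$ with $\Ext_R^i(L,R)$. Since $L\ne0$ has finite length, its annihilator is $\m$-primary, so $\Ext_R^i(L,R)=0$ for every $i<\grade\m=\depth R=t$. Because $n<t$, this kills the cohomology at $F_0^\ast,\dots,F_n^\ast$ (the case $i=0$ just records that $\Hom_R(L,R)=0$, so $d_1^\ast$ is injective), which proves the displayed sequence exact. Moreover, each matrix $d_i$ has all entries in $\m$, hence so does its transpose $d_i^\ast$, so the sequence is a \emph{minimal} free resolution of $\tr\syz^nL$; and since $L\ne0$ forces $\pd_RL\ge\depth R=t>n$, all of $F_0,\dots,F_{n+1}$ are nonzero, so the resolution has length exactly $n+1$ and therefore $\pd_R(\tr\syz^nL)=n+1$.

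Finally, the syzygy formula is read straight off this minimal free resolution: for $0\le i\le n$ the module $\syz^i(\tr\syz^nL)$ is $\operatorname{Coker}(d_{n+1-i}^\ast\colon F_{n-i}^\ast\to F_{n+1-i}^\ast)$, and since $F_{n-i+1}\xrightarrow{d_{n-i+1}}F_{n-i}\to\syz^{n-i}L\to0$ is a minimal presentation of $\syz^{n-i}L$, this cokernel is exactly $\tr\syz^{n-i}L$; for $i=n+1$ the syzygy is the terminal free module $F_0^\ast$, and it vanishes for $i\ge n+2$. The only real obstacle is bookkeeping — keeping the re-indexing of the dualized complex straight — since the one substantive input, that a nonzero finite-length module has no $\Ext_R^i(-,R)$ below $\depth R$, is classical.
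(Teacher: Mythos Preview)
Your proof is correct and follows essentially the same route as the paper: both arguments use that $\Ext_R^i(L,R)=0$ for $i\le n$ (since $n<\depth R$ and $L$ has finite length) to force exactness of the dualized complex, and then read the transpose off as the cokernel of $d_{n+1}^\ast$. You are in fact more explicit than the paper about why the resolution is minimal and why $F_0,\dots,F_{n+1}$ are all nonzero, but the underlying argument is the same.
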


\begin{proof}
There is an exact sequence $0\to\syz^nL\to F_{n-1}\xrightarrow{d_{n-1}}\cdots\xrightarrow{d_1}F_0\to L\to0$.
Since $n<t$, we have $\Ext^i(L,R)=0$ for any $i\le n$.
Hence $0 \to F_0^\ast \xrightarrow{d_1^\ast} \cdots \xrightarrow{d_{n-1}^\ast} F_{n-1}^\ast \to (\syz^nL)^\ast \to 0$ is exact.
Splicing this with $0\to(\syz^nL)^\ast\to F_n^\ast\xrightarrow{d_{n+1}^\ast}F_{n+1}^\ast\to\tr\syz^nL\to0$, we are done.
\end{proof}

\begin{prop}\label{8}
Assume $\depth R=t>0$.
Let $L\ne0$ be an $R$-module of finite length.
Then $\res(\tr\syz^nL)=\res(\tr\syz^nk)$ for every $0\le n<t$.
\end{prop}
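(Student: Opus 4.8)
The plan is to prove both inclusions simultaneously by a double induction: an outer induction on $n$ (with $0\le n<t$) and, for each fixed $n$, an inner induction on the length $\ell(L)$ of $L$. When $\ell(L)=1$ one has $L\cong k$ and there is nothing to prove; this is the base of the inner induction, and it also settles the outer base $n=0$, since (as will be clear below) the only appeal to the outer induction hypothesis becomes vacuous when $n=0$. Thus in the inductive step I may assume $\ell(L)\ge2$, that the assertion holds for all nonzero finite length modules of length $<\ell(L)$ with the same $n$, and --- when $n\ge1$ --- that it holds for $n-1$ in place of $n$ for all nonzero finite length modules.

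For the inclusion $\res(\tr\syz^nL)\subseteq\res(\tr\syz^nk)$, I would choose a maximal submodule $L'\subsetneq L$, so that $L/L'\cong k$ and $L'\ne0$ has length $\ell(L)-1$. Since $L'$ has finite length and $n<t=\depth R$ we have $\Ext^n(L',R)=0$, so Lemma \ref{2} applied to $0\to L'\to L\to k\to0$ produces an exact sequence
$$
0\to\tr\syz^nk\to\tr\syz^nL\oplus R^{\oplus m}\to\tr\syz^nL'\to0 .
$$
By the inner induction hypothesis $\res(\tr\syz^nL')=\res(\tr\syz^nk)$, so both ends of this sequence lie in $\res(\tr\syz^nk)$; closure under extensions puts the middle term there as well, hence also its direct summand $\tr\syz^nL$.

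The reverse inclusion is the heart of the argument. I would pick a simple submodule $S\subseteq L$ (one exists because $L$ has finite length), so $S\cong k$, and set $\overline L=L/S$, of length $\ell(L)-1$. Since $\Ext^n(k,R)=0$, Lemma \ref{2} applied to $0\to S\to L\to\overline L\to0$ gives
$$
0\to\tr\syz^n\overline L\to\tr\syz^nL\oplus R^{\oplus m}\to\tr\syz^nk\to0 ,
$$
and the key move is to apply the first of the exact sequences in Proposition \ref{1}(1) to this short exact sequence, obtaining
$$
0\to\syz\tr\syz^nk\to\tr\syz^n\overline L\oplus R^{\oplus a}\to\tr\syz^nL\oplus R^{\oplus m}\to0 .
$$
By Lemma \ref{4} applied with $k$ in place of $L$, the module $\syz\tr\syz^nk$ is free when $n=0$, hence lies in $\res(\tr\syz^nL)$ automatically, and is isomorphic to $\tr\syz^{n-1}k$ when $n\ge1$; in the latter case the outer induction hypothesis gives $\res(\tr\syz^{n-1}k)=\res(\tr\syz^{n-1}L)$, while Lemma \ref{4} identifies $\tr\syz^{n-1}L$ with $\syz\tr\syz^nL$, which lies in $\res(\tr\syz^nL)$. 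In every case, then, $\syz\tr\syz^nk\in\res(\tr\syz^nL)$. As the third term of the last sequence obviously belongs to $\res(\tr\syz^nL)$, closure under extensions now forces $\tr\syz^n\overline L\oplus R^{\oplus a}$, and hence $\tr\syz^n\overline L$, into $\res(\tr\syz^nL)$; the inner induction hypothesis then yields $\res(\tr\syz^nk)=\res(\tr\syz^n\overline L)\subseteq\res(\tr\syz^nL)$, completing the induction.

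I expect the reverse inclusion to be the main obstacle: a resolving subcategory is closed under syzygies and under kernels of epimorphisms but not under cokernels of monomorphisms, so $\tr\syz^nk$ cannot simply be read off as the cokernel term in Lemma \ref{2}'s sequence. The purpose of Proposition \ref{1}(1) is to trade that cokernel for its syzygy, and the purpose of the induction on $n$ is to make that syzygy --- which is essentially $\tr\syz^{n-1}k$ --- available inside $\res(\tr\syz^nL)$ via the already-established case $n-1$. One should check the hypothesis of Lemma \ref{2} at each application (the relevant $\Ext$ vanishes because the modules involved have finite length and $n<\depth R$) and take every syzygy and transpose with respect to a minimal free resolution, so that the isomorphisms supplied by Lemma \ref{4} are exact.
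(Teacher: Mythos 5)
Your proposal is correct, and both inclusions land the same forward argument as the paper, but the reverse inclusion follows a genuinely different route. The paper reuses the single short exact sequence $0\to L'\to L\to k\to0$ (where $L'$ is a maximal submodule), obtains from Lemma \ref{2} and Proposition \ref{1}(1) the sequence $0\to\syz\tr\syz^nL'\to\tr\syz^nk\oplus R^{\oplus l}\to\tr\syz^nL\oplus R^{\oplus m}\to0$, and then places $\tr\syz^nk$ \emph{in the middle}; showing the two ends lie in $\res(\tr\syz^nL)$ finishes immediately, and the induction on $n$ lives entirely inside the proof of the reverse inclusion (with the forward inclusion already established for every $L$, including $L'$, at level $n-1$). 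You instead pass to a simple submodule $S\cong k$ with quotient $\overline L$, so that $\tr\syz^nk$ is the \emph{cokernel} of the Lemma \ref{2} sequence; after applying Proposition \ref{1}(1) the term you capture by extension-closure is $\tr\syz^n\overline L$, not $\tr\syz^nk$ itself, and you need the additional appeal to the inner (length) induction for $\overline L$ to convert $\res(\tr\syz^n\overline L)$ back into $\res(\tr\syz^nk)$. This forces the genuine double induction on $(n,\ell(L))$ that you set up, whereas the paper decouples the two inductions cleanly (length alone for one inclusion, $n$ alone for the other). Your version costs one extra inductive appeal and a slightly heavier bookkeeping of the induction order, but it avoids having to track $\syz\tr\syz^nL'$ and works with the more canonical choice of a simple submodule; both are valid, and the lexicographic double induction is set up correctly since the outer hypothesis is only invoked for $n\ge 1$ and the inner one only for strictly shorter modules at the same $n$.
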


\begin{proof}
As $L\ne0$, there is an exact sequence $0 \to L' \to L \to k \to 0$.
Since $L'$ has finite length and $n<t$, we have $\Ext^n(L',R)=0$, and get an exact sequence $0 \to \tr\syz^nk \to \tr\syz^nL\oplus R^{\oplus m} \to \tr\syz^nL' \to 0$ by Lemma \ref{2}.
Induction on the length of $L$ shows that $\tr\syz^nL$ belongs to $\res(\tr\syz^nk)$, which implies
\begin{equation}\label{3}
\res(\tr\syz^nL)\subseteq\res(\tr\syz^nk).
\end{equation}
(Note that this inclusion relation is valid for $L=0$.)
By Proposition \ref{1}(1) the above exact sequence induces an exact sequence
\begin{equation}\label{6}
0 \to \syz\tr\syz^nL' \to \tr\syz^nk\oplus R^{\oplus l} \to \tr\syz^nL\oplus R^{\oplus m} \to 0.
\end{equation}
Let us prove
\begin{equation}\label{5}
\tr\syz^nk\in\res(\tr\syz^nL)
\end{equation}
by induction on $n$.
When $n=0$, the module $\syz\tr\syz^nL'$ is free by Lemma \ref{4}, and \eqref{5} follows from \eqref{6}.
When $n\ge1$, we have:
$$
\syz\tr\syz^nL'\overset{\rm(1)}{\cong}\tr\syz^{n-1}L'\overset{\rm(2)}{\in}\res(\tr\syz^{n-1}k)\overset{\rm(3)}{\subseteq}\res(\tr\syz^{n-1}L)\overset{\rm(4)}{=}\res(\syz\tr\syz^nL)\subseteq\res(\tr\syz^nL).
$$
Here (1),(4) follow from Lemma \ref{4}, (2) from \eqref{3}, and (3) from the induction hypothesis.
Now \eqref{5} is obtained by \eqref{6}.
Consequently, we have the inclusion $\res(\tr\syz^nL)\supseteq\res(\tr\syz^nk)$.
\end{proof}

\begin{lem}\label{9}
Let $M$ be an $R$-module such that $\syz M\in\ext(R\oplus M)$.
Then one has $\ext(R\oplus M)=\res M$.
\end{lem}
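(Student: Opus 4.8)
The plan is to prove the two inclusions separately, the inclusion $\ext(R\oplus M)\subseteq\res M$ being the easier one. Since $\res M$ is by definition resolving, it is in particular extension-closed and contains $R$ and $M$; being extension-closed it contains any extension, and being closed under direct summands it contains summands. Hence $\res M$ contains $R\oplus M$ and is extension-closed, so it contains the smallest such subcategory, namely $\ext(R\oplus M)$. This gives $\ext(R\oplus M)\subseteq\res M$ without using the hypothesis at all.

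For the reverse inclusion it suffices to show that $\ext(R\oplus M)$ is resolving, since it visibly contains $M$ and is contained in any resolving subcategory containing $M$; then minimality of $\res M$ forces $\res M\subseteq\ext(R\oplus M)$. By construction $\ext(R\oplus M)$ contains $R$ (hence all projectives, as it is closed under summands and finite direct sums of $R$) and is closed under direct summands and extensions, so the only axiom to verify is closure under syzygies: one must check that $N\in\ext(R\oplus M)$ implies $\syz N\in\ext(R\oplus M)$. The hypothesis $\syz M\in\ext(R\oplus M)$ is precisely the starting point for this; note also $\syz R=0\in\ext(R\oplus M)$.

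The main work, and the step I expect to be the real obstacle, is to propagate closure under syzygies from the generators $R$ and $M$ to the whole extension closure. Write $\ext(R\oplus M)=\bigcup_{i\ge0}\mathcal{E}_i$ where $\mathcal{E}_0=\add(R\oplus M)$ and $\mathcal{E}_{i+1}$ consists of direct summands of modules $E$ fitting in a short exact sequence $0\to E'\to E\to E''\to 0$ with $E',E''\in\mathcal{E}_i$; one proceeds by induction on $i$. For $i=0$: a summand of $(R\oplus M)^{\oplus n}$ is, up to free summands, a summand of $M^{\oplus n}$, and $\syz$ of a summand is a summand of $\syz$ of the whole, while $\syz(M^{\oplus n})=(\syz M)^{\oplus n}\in\ext(R\oplus M)$ by hypothesis and closure under finite direct sums; so $\syz\mathcal{E}_0\subseteq\ext(R\oplus M)$. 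For the inductive step, given $0\to E'\to E\to E''\to 0$, Proposition \ref{1}(1) furnishes an exact sequence $0\to\syz E''\to E'\oplus R^{\oplus a}\to E\to 0$, whence $\syz E''\oplus(\text{something in }\ext(R\oplus M))$ sits in an extension, and combining this with the exact sequence $0\to\syz E'\to\syz E\oplus R^{\oplus b}\to\syz E''\to 0$ (also from Proposition \ref{1}(1)) together with the inductive hypotheses $\syz E',\syz E''\in\ext(R\oplus M)$ lets one conclude that $\syz E\oplus R^{\oplus b}\in\ext(R\oplus M)$, hence $\syz E\in\ext(R\oplus M)$; finally $\syz$ of a summand of $E$ is a summand of $\syz E$ plus a free module, so it too lies in $\ext(R\oplus M)$. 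This completes the induction, shows $\ext(R\oplus M)$ is resolving, and yields $\res M\subseteq\ext(R\oplus M)$, finishing the proof. The delicate point throughout is the careful bookkeeping of the auxiliary free summands $R^{\oplus a}$, $R^{\oplus b}$ produced by Proposition \ref{1}(1), which must be absorbed using that $R\in\ext(R\oplus M)$ and that the category is closed under summands.
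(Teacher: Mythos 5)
Your proof is correct and rests on the same key idea as the paper's: the syzygy short exact sequence $0\to\syz E'\to\syz E\oplus R^{\oplus b}\to\syz E''\to 0$ from Proposition \ref{1}(1) is what lets you propagate closure under $\syz$ through extensions, and the base case is handled by the hypothesis $\syz M\in\ext(R\oplus M)$. The only difference is organizational: you filter $\ext(R\oplus M)$ by the construction stages $\mathcal{E}_i$ and induct on $i$, whereas the paper considers the single subcategory $\X=\{\,N\in\ext(R\oplus M)\mid\syz N\in\ext(R\oplus M)\,\}$, checks it contains $R\oplus M$ and is closed under direct summands and extensions, and concludes $\X=\ext(R\oplus M)$ by minimality. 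The paper's trick sidesteps the need to set up and justify the filtration; the extension-closure step of that argument is precisely your inductive step in disguise. One small remark: the first exact sequence $0\to\syz E''\to E'\oplus R^{\oplus a}\to E\to 0$ you invoke at the start of the inductive step plays no role and can be dropped — only the second sequence is used, as your own wording in fact shows.
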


\begin{proof}
It is clear that $\ext(R\oplus M)$ is contained in $\res M$.
Let us consider the subcategory
$$
\X=\{\,N\in\ext(R\oplus M)\mid\syz N\in\ext(R\oplus M)\,\}\subseteq\ext(R\oplus M).
$$
By assumption $\X$ contains $R\oplus M$, and we easily see that $\X$ is closed under direct summands.
Let $0\to S\to T\to U\to 0$ be an exact sequence of $R$-modules with $S,U\in\X$.
Then $T,\syz S,\syz U$ are in $\ext(R\oplus M)$, and there is an exact sequence $0\to \syz S\to \syz T\oplus R^{\oplus n}\to \syz U\to 0$.
Hence $\syz T$ belongs to $\ext(R\oplus M)$, which implies $T\in\X$.
Therefore $\X$ is closed under extensions, and thus $\X=\ext(R\oplus M)$.
Now $\ext(R\oplus M)$ is closed under syzygies, which means that it is resolving.
\end{proof}

Now we can achieve the main purpose of this section.

\begin{proof}[{\bf Proof of Theorem \ref{1.1}}]
First of all, it is easy to observe that there is a filtration $\add R=\PD_0^0(R)\subsetneq\PD_0^1(R)\subsetneq\cdots\subsetneq\PD_0^t(R)=\FPD_0(R)$, and that each $\PD_0^n(R)$ is a resolving subcategory of $\mod R$.
When $t=0$, we have $\FPD_0(R)=\PD_0^0(R)=\add R$, which is the smallest resolving subcategory of $\mod R$.
So, we may assume $t>0$.

(1) Let us prove the equalities
$$
\PD_0^n(R)=\res(\tr\syz^{n-1}k)=\ext(R\oplus\textstyle\bigoplus_{i=0}^{n-1}\tr\syz^ik)
$$
for each $1\le n\le t$.
Lemma \ref{4} implies that $\tr\syz^{n-1}k$ belongs to $\PD_0^n(R)$.
Hence $\PD_0^n(R)$ contains $\res(\tr\syz^{n-1}k)$.
Let $M$ be an $R$-module in $\PD_0^n(R)$.
By Proposition \ref{7} we have $\res(\tr\syz^i\tr\syz^iM)\subseteq\res(\tr\syz^{i+1}\tr\syz^{i+1}M\oplus\tr\syz^i\Ext^{i+1}(M,R))$ for $0\le i<n$.
As $\Ext^{i+1}(M,R)$ has finite length, $\tr\syz^i\Ext^{i+1}(M,R)$ is in $\res(\tr\syz^ik)$ by Proposition \ref{8}.
Since $M$ has projective dimension at most $n$, we have $\tr\syz^n\tr\syz^nM=0$.
Lemma \ref{4} implies that $\tr\syz^ik\cong\syz^{n-1-i}(\tr\syz^{n-1}k)\in\res(\tr\syz^{n-1}k)$ for every $0\le i\le n-1$.
Also, $\tr\syz^0\tr\syz^0M$ is isomorphic to $M$ up to free summand.
Hence:
\begin{align*}
\res M &= \res(\tr\syz^0\tr\syz^0M)\subseteq\res(\tr\syz^1\tr\syz^1M\oplus\tr\syz^{n-1}k)\subseteq\res(\tr\syz^2\tr\syz^2M\oplus\tr\syz^{n-1}k)\\
& \subseteq\cdots\subseteq\res(\tr\syz^n\tr\syz^nM\oplus\tr\syz^{n-1}k)=\res(\tr\syz^{n-1}k).
\end{align*}
Therefore $\PD_0^n(R)=\res(\tr\syz^{n-1}k)$ holds.
The equality $\res(\tr\syz^{n-1}k)=\ext(R\oplus\bigoplus_{i=0}^{n-1}\tr\syz^ik)$ is a consequence of Lemmas \ref{4} and \ref{9}.

(2) Let us prove that every resolving subcategory contained in $\FPD_0(R)$ coincides with one of $\PD_0^0(R),\dots,\PD_0^t(R)$.
We start by establishing a claim.
\begin{claim*}
Let $1\le n\le t$.
Let $X$ be an $R$-module in $\PD_0^n(R)\setminus\PD_0^{n-1}(R)$.
Then $\tr\syz^iL$ belongs to $\res X$ for all $R$-modules $L$ of finite length and all integers $0\le i\le n-1$.
\end{claim*}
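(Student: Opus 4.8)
The plan is to argue by descending induction on $n$, peeling off the top of the filtration one step at a time, and to reduce everything via Proposition \ref{8} to the single module $\tr\syz^{n-1}k$. Fix $1\le n\le t$ and $X\in\PD_0^n(R)\setminus\PD_0^{n-1}(R)$. Since $X$ is locally free on the punctured spectrum and has projective dimension exactly $n$ (if it were $<n$ it would lie in $\PD_0^{n-1}(R)$), the module $\tr\syz^{n-1}\tr\syz^{n-1}X$ is the transpose of a length-type obstruction. The first step is to run the chain of short exact sequences from Proposition \ref{7} \emph{in reverse}: from $0\to X\to\cdots$ we obtained in part (1) of the proof of Theorem \ref{1.1} inclusions $\res(\tr\syz^i\tr\syz^iX)\subseteq\res(\tr\syz^{i+1}\tr\syz^{i+1}X\oplus\tr\syz^i\Ext^{i+1}(X,R))$; I want to extract from these the \emph{converse-direction} information, namely that $\tr\syz^i\Ext^{i+1}(X,R)\in\res X$ for all $0\le i<n$. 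For $i=n-1$ this should come directly from the exact sequence of Proposition \ref{7} for $i=n-1$ together with the fact that $\tr\syz^n\tr\syz^nX$ is free (since $\pd X\le n$), so that $\tr\syz^{n-1}\Ext^n(X,R)$ sits in $\res(\tr\syz^{n-1}\tr\syz^{n-1}X)\subseteq\res X$.

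The key point is then to show $\Ext^n(X,R)\neq0$. Here one uses that $\pd X=n$ exactly: since $X$ has finite projective dimension $n$ and $R$ has depth $t\ge n$, the Auslander--Buchsbaum formula gives $\depth X=t-n$, and $\Ext^n(X,R)\neq0$ (it is the top nonvanishing $\Ext$, detected because $\pd X=n$ precisely means $\Ext^n(X,R)\neq0$ — indeed for a module of finite projective dimension, $\pd X=\sup\{i:\Ext^i(X,R)\neq0\}$). Moreover $\Ext^n(X,R)$ has finite length, because $X$ is free on the punctured spectrum. Now invoke Proposition \ref{8} with $L=\Ext^n(X,R)\neq0$: it yields $\res(\tr\syz^{n-1}\Ext^n(X,R))=\res(\tr\syz^{n-1}k)$. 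Combining, $\tr\syz^{n-1}k\in\res X$, hence $\PD_0^n(R)=\res(\tr\syz^{n-1}k)\subseteq\res X$ by part (1), and since $X\in\PD_0^n(R)$ we get $\res X=\PD_0^n(R)$. In particular $\tr\syz^i k\cong\syz^{n-1-i}(\tr\syz^{n-1}k)\in\res X$ for $0\le i\le n-1$ by Lemma \ref{4}, and then $\tr\syz^iL\in\res(\tr\syz^ik)=\res X$ for every finite-length $L$ and every $0\le i\le n-1$, again by Proposition \ref{8}.

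So the logical skeleton is: (a) $\pd X=n$ forces $\Ext^n(X,R)\neq0$, of finite length; (b) the $i=n-1$ sequence of Proposition \ref{7}, with $\tr\syz^n\tr\syz^nX$ free, puts $\tr\syz^{n-1}\Ext^n(X,R)$ into $\res X$; (c) Proposition \ref{8} upgrades this to $\tr\syz^{n-1}k\in\res X$; (d) Lemma \ref{4} and Proposition \ref{8} then distribute this down to all $\tr\syz^iL$ with $i\le n-1$. I expect step (b) to be the delicate one: one must be careful that the exact sequence of Proposition \ref{7} genuinely gives a containment in the \emph{resolving} closure of $X$ and not merely of $\tr\syz^{n-1}\tr\syz^{n-1}X$, and that $\tr\syz^{n-1}\tr\syz^{n-1}X\in\res X$ — the latter holds because $\tr\syz^{n-1}\tr\syz^{n-1}X$ is (up to free summands) $\syz^{?}$-related to $X$ through the Proposition \ref{1}(2),(4) identities, or more simply because each of the short exact sequences $0\to\tr\syz^{i+1}\tr\syz^{i+1}X\to\tr\syz^i\tr\syz^iX\oplus R^{n_i}\to\tr\syz^i\Ext^{i+1}(X,R)\to0$ can be read as exhibiting $\tr\syz^{i+1}\tr\syz^{i+1}X$ as a syzygy-type sub of something already in $\res X$ once $\tr\syz^i\Ext^{i+1}(X,R)$ is known to be there; so in fact (b) should be run as a small downward induction on $i$ from $i=0$, feeding Proposition \ref{8} at each stage to keep $\tr\syz^i\Ext^{i+1}(X,R)\in\res(\tr\syz^ik)\subseteq\res X$ under control. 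Once the bookkeeping of which module lands in which resolving closure is pinned down, the rest is formal.
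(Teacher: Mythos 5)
Your plan correctly identifies all the working parts --- $E=\Ext^n(X,R)$ is nonzero of finite length because $\pd X=n$ and $X$ is locally free off $\m$; Proposition \ref{8} identifies $\res(\tr\syz^{n-1}E)$ with $\res(\tr\syz^{n-1}k)$; and the Proposition \ref{7} ladder is the right tool for showing $\tr\syz^{n-1}\tr\syz^{n-1}X\in\res X$ --- but the step you yourself flag as delicate is where the argument is circular. To push the sequences $0\to\tr\syz^{j+1}\tr\syz^{j+1}X\to\tr\syz^j\tr\syz^jX\oplus R^{n_j}\to\tr\syz^j\Ext^{j+1}(X,R)\to0$ from $j=0$ up to $j=n-2$, you need $\tr\syz^j\Ext^{j+1}(X,R)\in\res X$ at each stage, and you propose to get this from ``$\tr\syz^j\Ext^{j+1}(X,R)\in\res(\tr\syz^jk)\subseteq\res X$.'' But the containment $\res(\tr\syz^jk)\subseteq\res X$ for $j<n-1$ is precisely an instance of the Claim (equivalently, by part (1) of the proof, $\PD_0^{j+1}(R)\subseteq\res X$), so you are assuming what you set out to prove. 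Only the $j=0$ start is free, since $\tr\tr X$ is a direct summand of $X$; nothing in your proposal gets $\tr\Ext^1(X,R)$ into $\res X$ when $n\ge2$.

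The missing move --- which the paper makes --- is a genuine induction on $n$ with the induction hypothesis applied not to $X$ but to $\syz X$. Since $\pd X=n$ exactly, one has $\syz X\in\PD_0^{n-1}(R)\setminus\PD_0^{n-2}(R)$, so the Claim for $n-1$ gives $\tr\syz^jL\in\res(\syz X)\subseteq\res X$ for every finite-length $L$ and $0\le j\le n-2$. This is what places each finite-length obstruction $\tr\syz^j\Ext^{j+1}(X,R)$ ($j\le n-2$) inside $\res X$, after which your Proposition \ref{7} ladder and the Proposition \ref{8} step for $i=n-1$ go through exactly as you outlined. You announce a ``descending induction on $n$'' at the outset but never actually invoke any induction hypothesis; adding this one application to $\syz X$ closes the gap.
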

\begin{proof}[Proof of Claim]
Fix an $R$-module $L$ of finite length.
Set $E=\Ext^n(X,R)$.
Since $X$ has projective dimension $n$, we see that $E$ is a nonzero module of finite length.
Proposition \ref{8} yields $\tr\syz^{n-1}L\in\res(\tr\syz^{n-1}k)=\res(\tr\syz^{n-1}E)$.
It is easy to see that $E\cong\tr\syz^{n-1}X$, whence $\tr\syz^{n-1}E\cong\tr\syz^{n-1}\tr\syz^{n-1}X$.
Thus:
\begin{equation}\label{2201458}
\tr\syz^{n-1}L\in\res(\tr\syz^{n-1}\tr\syz^{n-1}X).
\end{equation}

We show the claim by induction on $n$.
When $n=1$, it follows from \eqref{2201458} and Proposition \ref{1}(4).
Let $n\ge2$.
Since $\syz X\in\PD_0^{n-1}(R)\setminus\PD_0^{n-2}(R)$, the induction hypothesis implies that $\tr\syz^jL$ is in $\res\syz X$ for all $0\le j\le n-2$.
Hence:
\begin{equation}\label{2201558}
\tr\syz^jL\in\res X\quad (0\le j\le n-2).
\end{equation}
By Proposition \ref{7}, there are exact sequences
$$
0 \to \tr\syz^{j+1}\tr\syz^{j+1}X \to \tr\syz^j\tr\syz^jX\oplus R^{\oplus m_j} \to \tr\syz^jE_j \to 0\quad(0\le j\le n-2),
$$
where $E_j=\Ext^{j+1}(X,R)$.
As $E_j$ has finite length, $\tr\syz^jE_j$ is in $\res X$ for $0\le j\le n-2$ by \eqref{2201558}.
Using the above exact sequences, we inductively observe that:
\begin{equation}\label{2201611}
\tr\syz^{n-1}\tr\syz^{n-1}X\in\res X.
\end{equation}
Combining \eqref{2201458}, \eqref{2201558} and \eqref{2201611} yields that $\tr\syz^iL$ belongs to $\res X$ for any integer $0\le i\le n-1$.
\renewcommand{\qedsymbol}{$\square$}
\end{proof}

Now, let $\X\ne\add R$ be a resolving subcategory of $\mod R$ contained in $\FPD_0(R)$.
Then there exists a unique integer $1\le n\le t$ such that $\X$ is contained in $\PD_0^n(R)$ but not contained in $\PD_0^{n-1}(R)$.
We find an $R$-module $X\in\X$ outside $\PD_0^{n-1}(R)$.
The above claim says $\tr\syz^{n-1}k\in\res X$, whence $\tr\syz^{n-1}k\in\X$.
By part (1) of the proof we have $\X=\PD_0^n(R)$.
\end{proof}

We close this section by stating a corollary of Theorem \ref{1.1}, which will be used later.

\begin{cor}\label{219928}
Let $R$ be local.
Let $M\in\FPD_0(R)$.
Then $\res M=\PD_0^n(R)$ with $n=\pd M$.
\end{cor}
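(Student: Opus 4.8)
The plan is to deduce Corollary~\ref{219928} directly from Theorem~\ref{1.1}. The statement is essentially that, for a module $M$ of finite projective dimension that is locally free on the punctured spectrum, the resolving closure $\res M$ is completely determined by the single invariant $\pd M$, and equals the corresponding layer $\PD_0^n(R)$ of the filtration in Theorem~\ref{1.1}.

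First I would dispose of the trivial cases. If $M=0$ or $M$ is free, then $\pd M = 0$ and $\res M = \add R = \PD_0^0(R)$, so the claim holds. In particular we may assume $M$ is nonzero and nonfree, so $n=\pd M\ge 1$, which in turn forces $\depth R = t \ge n > 0$, putting us in the regime where the second half of Theorem~\ref{1.1} applies.

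Next, set $n=\pd M$ and observe that $M\in\PD_0^n(R)$ by definition of $\PD_0^n(R)$, and $M\notin\PD_0^{n-1}(R)$ since $\pd M = n > n-1$; so $M\in\PD_0^n(R)\setminus\PD_0^{n-1}(R)$. Now consider the resolving subcategory $\res M$. It is contained in $\PD_0^n(R)\subseteq\FPD_0(R)$, hence by the classification part of Theorem~\ref{1.1} it equals $\PD_0^m(R)$ for some $0\le m\le t$. Since $M\in\res M = \PD_0^m(R)$ we get $m\ge\pd M = n$; since $\res M\subseteq\PD_0^n(R)$ we get $\PD_0^m(R)\subseteq\PD_0^n(R)$, and because the filtration in Theorem~\ref{1.1} is strictly increasing this forces $m\le n$. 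Therefore $m=n$ and $\res M = \PD_0^n(R)$, as desired.

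There is no real obstacle here: everything is a formal consequence of Theorem~\ref{1.1}, the only care needed being to confirm that $M$ genuinely lies outside $\PD_0^{n-1}(R)$ (immediate from $\pd M = n$) and to invoke the strict inclusions in the filtration to pin down $m$. Alternatively, one could argue without the full classification by combining part~(1) of Theorem~\ref{1.1} with the Claim in its proof: $\res M \supseteq \res(\tr\syz^{n-1}k) = \PD_0^n(R)$ (the Claim gives $\tr\syz^{n-1}k\in\res M$, hence $\PD_0^n(R)=\res(\tr\syz^{n-1}k)\subseteq\res M$), while $M\in\PD_0^n(R)$ gives $\res M\subseteq\PD_0^n(R)$; but the classification-based argument above is cleaner and I would present that.
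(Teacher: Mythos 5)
Your proof is correct and follows essentially the same route as the paper's: both invoke the classification from Theorem~\ref{1.1} to write $\res M=\PD_0^m(R)$, deduce $n\le m$ from $M\in\PD_0^m(R)$, and pin down $m=n$ using the strictness of the filtration (the paper does this via a short contradiction with $\PD_0^{m-1}(R)$, you argue directly via $\res M\subseteq\PD_0^n(R)$). The explicit handling of the free/zero case and the remark about $t\ge n>0$ are harmless but not needed, since the classification part of Theorem~\ref{1.1} already covers $t=0$.
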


\begin{proof}
Theorem \ref{1.1} shows that $\res M=\PD_0^m(R)$ for some integer $m$.
Since $M$ is in $\res M$, we have $n\le m$.
Suppose $n<m$.
Then $M$ is in $\PD_0^{m-1}(R)$.
As $\PD_0^{m-1}(R)$ is resolving, $\res M$ is contained in $\PD_0^{m-1}(R)$.
This is a contradiction, which implies $n=m$.
\end{proof}

\section{Resolving subcategories of modules of finite projective dimension}\label{rsomofpd}

Throughout this section, let $R$ be an arbitrary commutative noetherian ring with identity.
In this section, we study resolving subcategories whose objects are modules of finite projective dimension.

For an $R$-module $M$, denote by $\NF(M)$ the {\em nonfree locus} of $M$, that is, the set of prime ideals $\p$ of $R$ such that $M_\p$ is nonfree as an $R_\p$-module.
As is well-known, $\NF(M)$ is a closed subset of $\Spec R$ (cf. \cite[Corollary 2.11]{res}).

The following result, which is proved in \cite[Lemma 4.6]{radius}, enables us to replace in a resolving subcategory a module with a module whose nonfree locus is irreducible.
This is a generalization of \cite[Theorem 4.3]{res}, and will also be used in the next section.

\begin{prop}\label{c}
Let $M$ be an $R$-module.
For every $\p\in\NF(M)$ there exists $X\in\res M$ satisfying $\NF(X)=V(\p)$ and $\depth X_\q=\inf\{\depth M_\q,\depth R_\q\}$ for all $\q\in V(\p)$.
\end{prop}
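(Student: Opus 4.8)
As this proposition is attributed to \cite[Lemma 4.6]{radius}, I only sketch the argument. The basic mechanism I would exploit is that any resolving subcategory $\Y$ is automatically closed under kernels of surjections between its objects: given $0\to A\to B\to C\to 0$ with $B,C\in\Y$, pull back along a surjection $R^{\oplus n}\twoheadrightarrow C$ to obtain exact sequences $0\to\syz C\to E\to B\to 0$ and $0\to A\to E\to R^{\oplus n}\to 0$; the second splits since $R^{\oplus n}$ is projective, so $A$ is a direct summand of $E$, and $E\in\Y$ because $\syz C,B\in\Y$ and $\Y$ is extension-closed. Combining this with the first exact sequence of Proposition \ref{1}(1) applied to $0\to Y\xrightarrow{x}Y\to Y/xY\to 0$ (for $x$ a nonzerodivisor on $Y$) shows that $\syz(Y/xY)$, being the kernel of a surjection $Y\oplus R^{\oplus a}\twoheadrightarrow Y$, lies in $\res Y$. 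This is the construction that will let us shrink nonfree loci while staying inside $\res M$.

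The plan is then a Noetherian (DCC) argument on nonfree loci. Among the $Y\in\res M$ with $\p\in\NF(Y)$ --- a nonempty family, containing $M$ --- pick one for which the closed set $\NF(Y)$ is minimal under inclusion. I claim $\NF(Y)=V(\p)$; granting this, $X:=Y$ already has the correct nonfree locus, and the depth equality is arranged by a final step described below. Suppose instead $\NF(Y)\supsetneq V(\p)$. Then $\NF(Y)$ has a minimal prime $\q$ with $\p\not\subseteq\q$ (if $\p$ is not itself a minimal prime of $\NF(Y)$, take the minimal prime below it; if $\p$ is minimal but $\NF(Y)\neq V(\p)$, take a different minimal prime). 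By prime avoidance choose $x\in\p$ with $x\notin\q$ and $x$ a nonzerodivisor on $R$ and on $Y$, and put $X=\syz(Y/xY)\in\res Y\subseteq\res M$. Localizing at a prime $\r$: if $x\notin\r$ then $(Y/xY)_\r=0$ and $X_\r$ is free; if $x\in\r$ then, over the local ring $R_\r$, the module $Y_\r/xY_\r$ has projective dimension $\pd_{R_\r}Y_\r+1$, so its first syzygy $X_\r$ is free precisely when $Y_\r$ is. Hence $\NF(X)=\NF(Y)\cap V(x)$, which still contains $\p$ (as $x\in\p$) but omits $\q$ --- contradicting minimality of $\NF(Y)$.

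For the depth statement, fix $\q\in V(\p)$; then every element $x$ produced above satisfies $x\in\p\subseteq\q$, so applying the depth lemma to $0\to Y_\q\xrightarrow{x}Y_\q\to Y_\q/xY_\q\to 0$ and to $0\to X_\q\to F\to Y_\q/xY_\q\to 0$ (with $F$ free) shows that the step $Y\rightsquigarrow\syz(Y/xY)$ sends $\depth Y_\q$ to $\inf\{\depth Y_\q,\depth R_\q\}$. Iterating from $Y=M$, the value at $\q$ stabilizes at $\inf\{\depth M_\q,\depth R_\q\}$; and if $\NF(M)=V(\p)$ from the outset one performs a single such step, which keeps $\NF$ equal to $V(\p)\cap V(x)=V(\p)$ because $x\in\p$, thereby installing the depth condition. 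I expect the genuinely delicate point to be the appeal to prime avoidance: it breaks down exactly when $\p$ is contained in an associated prime of $R$ or of $Y$ (for instance whenever $\depth R_\p=0$), and there one must argue instead with ordinary syzygies $\syz^i Y$, which only shrink the nonfree locus and --- since $\pd_{R_\p}$ of the relevant module is then infinite --- keep $\p$ inside it; handling this degenerate case cleanly is the technical heart of \cite[Lemma 4.6]{radius}.
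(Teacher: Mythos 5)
The high-level shape of your argument --- shrink $\NF$ one irreducible component at a time by passing to $\syz(Y/xY)$ for a well-chosen $x\in\p$, then do one extra such step to install the depth equality --- is a reasonable reconstruction, and the individual mechanical facts you cite (closure of resolving subcategories under kernels of surjections; $\NF(\syz(Y/xY))=\NF(Y)\cap V(x)$ when $x$ is $Y$-regular; the pullback form of Proposition \ref{1}(1)) are all correct. There are, however, two real problems.

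First, the prime avoidance step is not a side issue that you can defer to the reference; it is a genuine gap and the fix you sketch does not close it. You need $x\in\p$ to be a nonzerodivisor on $Y$ and to miss a chosen minimal prime $\q$ of $\NF(Y)$. Prime avoidance fails exactly when $\p\subseteq\r$ for some $\r\in\Ass Y$ (or $\Ass R$). Your proposed remedy --- replace $Y$ by an ordinary syzygy $\syz^i Y$, arguing that $\pd_{R_\p}Y_\p=\infty$ keeps $\p$ inside the nonfree locus --- is only valid when $\pd_{R_\p}Y_\p$ really is infinite. That is automatic when $\depth R_\p=0$, but not when $\p$ merely sits inside some larger associated prime of $Y$. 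For instance, over $R=k[[x,y]]$ with $Y=R/(x^2)\oplus R/(xy)$ and $\p=(x)$, one has $\p\in\Ass Y$, so no element of $\p$ is $Y$-regular, yet $\pd_{R_\p}Y_\p=1$ and $\syz Y$ is already free --- so passing to syzygies throws $\p$ out of the nonfree locus entirely and the construction stalls. Handling this finite-projective-dimension-but-no-regular-element case requires a different idea, and you have not supplied one.

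Second, the logic of your Noetherian induction does not actually deliver the depth condition. You first pick $Y\in\res M$ with $\NF(Y)$ minimal among closed sets of the form $\NF(Z)$, $Z\in\res M$, $\p\in\NF(Z)$. That gives a set-theoretically minimal $Y$, but it tells you nothing about $\depth Y_\q$, because you have not exhibited $Y$ as the end of a chain of $\syz(-/x-)$ steps starting from $M$. Your later sentence ``Iterating from $Y=M$'' quietly switches to an explicit iteration, which is the right move, but you would then have to run the whole argument as an explicit finite construction (choose $x_0,x_1,\dots$, show $\NF$ strictly drops each time until it equals $V(\p)$), and track the depth formula through each step; and there you also need an upper bound $\depth(\syz(Y/xY))_\q\le\depth R_\q$ in the regime $\depth Y_\q\ge\depth R_\q$, which you assert via ``the depth lemma'' but which the depth lemma alone does not give --- it only provides the lower bound. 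That upper bound is true in the cases at hand but needs its own short argument (e.g., via Auslander--Buchsbaum when $\pd$ is finite, and a separate argument otherwise), and leaving it implicit is exactly the kind of thing that fails in the same degenerate cases where prime avoidance fails.

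So: the overall strategy is the right one and likely close to what \cite[Lemma 4.6]{radius} does, but the degenerate case you flag is not a removable technicality --- it is where the proof actually lives, your proposed workaround does not cover it, and the depth bookkeeping as written is incomplete.
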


Next we state two results which are essentially proved in \cite{stcm}.

\begin{lem}\label{2182108}
Let $\X$ be a resolving subcategory of $\mod R$.
\begin{enumerate}[\rm(1)]
\item
For each $\p\in\Spec R$, the subcategory $\add\X_\p$ of $\mod R_\p$ is resolving.
\item
Let $Z$ be a nonempty finite subset of $\Spec R$.
Let $M$ be an $R$-module such that $M_\p\in\add\X_\p$ for all $\p\in Z$.
Then there exist exact sequences
\begin{align*}
& 0 \to K \to X \to M \to 0,\\
& 0 \to L \to M\oplus K\oplus R^{\oplus n} \to X \to 0
\end{align*}
of $R$-modules with $X\in\X$, $\NF(L)\subseteq\NF(M)$ and $\NF(L)\cap Z=\emptyset$.
\end{enumerate}
\end{lem}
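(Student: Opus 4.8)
Following \cite{stcm}, the plan is to treat the two parts separately; the main tools are flat base change for $\Hom$ and $\Ext$ and the fact that a finitely generated projective module over a local ring is free.

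For (1) I would check the four closure properties of a resolving subcategory directly for $\add\X_\p$. That it contains the projective $R_\p$-modules is immediate from $R\in\X$ together with the fact that $\add(-)$ absorbs finite direct sums and direct summands, and closure under summands is built into the definition of $\add$. For syzygies, note that localizing an $R$-free resolution of $X\in\X$ gives an $R_\p$-free resolution of $X_\p$, so $\syz_{R_\p}(X_\p)$ is a direct summand of $(\syz X)_\p$, which lies in $\X_\p$ since $\X$ is closed under syzygies; a general object of $\add\X_\p$ is a summand of some $X_\p$, and its syzygy is a summand of $\syz_{R_\p}(X_\p)$ up to free summands, hence in $\add\X_\p$. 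For extensions, given $0\to A\to B\to C\to 0$ over $R_\p$ with $A,C\in\add\X_\p$, choose complements $A',C'$ (automatically in $\add\X_\p$) with $A\oplus A'\cong Y_\p$ and $C\oplus C'\cong X_\p$ for some $X,Y\in\X$; enlarging the sequence to $0\to Y_\p\to B\oplus A'\oplus C'\to X_\p\to 0$ and using $\Ext^1_{R_\p}(X_\p,Y_\p)\cong\Ext^1_R(X,Y)_\p$, its class is $\xi/s$ for some $\xi\in\Ext^1_R(X,Y)$ and $s\notin\p$. The $R$-extension of $X$ by $Y$ with class $\xi$ has middle term in $\X$ (closure under extensions), and since $\xi/1$ and $\xi/s$ differ by a unit of $R_\p$ the localized middle term is isomorphic to $B\oplus A'\oplus C'$; hence $B\oplus A'\oplus C'$, and so $B$, lies in $\add\X_\p$.

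For (2) I would first reduce to a single module of $\X$: pick for each $\p\in Z$ an $X^{(\p)}\in\X$ with $M_\p$ a direct summand of $X^{(\p)}_\p$, and put $X_1=\bigoplus_{\p\in Z}X^{(\p)}\in\X$, so $M_\p$ is a summand of $(X_1)_\p$ for every $\p\in Z$. Since $Z$ is finite, lifting on each direct summand a local split surjection $X^{(\p)}_\p\to M_\p$ (which stays split surjective at $\p$, being the original one times a unit of $R_\p$) produces $g\colon X_1\to M$ that is split surjective after localizing at every $\p\in Z$. Composing with a free surjection $R^{\oplus m}\twoheadrightarrow M$ gives a surjection $\pi\colon X:=X_1\oplus R^{\oplus m}\to M$ with $X\in\X$; setting $K=\ker\pi$ yields the first sequence, and $\pi_\p$ is split surjective, hence $X_\p\cong M_\p\oplus K_\p$, for every $\p\in Z$, so the class $\xi\in\Ext^1_R(M,K)$ of the first sequence localizes to zero at each $\p\in Z$. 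Now $\operatorname{Ann}_R(\xi)\not\subseteq\p$ for all $\p\in Z$, so prime avoidance gives $s\in\operatorname{Ann}_R(\xi)$ with $s\notin\p$ for every $\p\in Z$; then $s\xi=0$ produces $\alpha\colon M\to X$ with $\pi\alpha=s\cdot\mathrm{id}_M$. I set $\mu=(\alpha,\iota)\colon M\oplus K\to X$ (with $\iota\colon K\hookrightarrow X$), observe $\operatorname{coker}\mu\cong M/sM$, lift a free surjection $R^{\oplus n}\twoheadrightarrow M/sM$ along $X\twoheadrightarrow M/sM$ to $\lambda\colon R^{\oplus n}\to X$, and let $L$ be the kernel of the surjection $(\mu,\lambda)\colon M\oplus K\oplus R^{\oplus n}\to X$; this is the second sequence. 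To finish I would compute $\NF(L)$: at $\p\in Z$, $s$ is a unit so $\mu_\p$ is an isomorphism and $L_\p\cong R_\p^{\oplus n}$ is free; at $\p\notin\NF(M)$, $M_\p$ is free so the first sequence splits at $\p$, and a short diagram chase identifies $L_\p$ with the kernel of a split surjection onto $M_\p$, hence with a direct summand of a free $R_\p$-module, which is free since $R_\p$ is local. Thus $\NF(L)\subseteq\NF(M)$ and $\NF(L)\cap Z=\emptyset$.

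I expect the delicate point to be this last step of (2): arranging that $L$ is free exactly where it should be. Choosing $\lambda$ so that, after localizing at a prime where $M$ is free, the relevant surjection of free modules splits, is where the hypothesis $M_\p\in\add\X_\p$ (through the local splitting of $\pi$) and the element $s$ annihilating $\xi$ are genuinely used. By contrast, part (1) is routine flat base change, and the initial reduction in (2) to a single $X_1$ is purely formal.
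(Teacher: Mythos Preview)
Your proposal is correct and follows essentially the same route as the paper, which simply observes that \cite[Lemmas 4.6, 4.8 and Proposition 4.7]{stcm} go through without the local hypothesis; you have spelled out those arguments explicitly, and each step checks out (including the last computation for $\p\notin\NF(M)$, where quotienting by the split copy of $K_\p$ inside $X_\p$ identifies $L_\p$ with the kernel of a surjection between the free modules $M_\p\oplus R_\p^{\oplus n}$ and $M_\p$).
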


\begin{proof}
Note that the results \cite[Lemmas 4.6 and 4.8]{stcm} hold even if the ring $R$ is not local, as the proofs show.
The first assertion now follows, and the second one can be shown along the same lines as in the proof of \cite[Proposition 4.7]{stcm}.
\end{proof}

Next we investigate the relationship between a module in a resolving subcategory and its localization.
Thanks to the following proposition, we can reduce problems on resolving subcategories to the case where the base ring is local.
(It would be interesting to compare this result with the local-global principle given in \cite{BIK3,BIK2,S}.)

\begin{prop}\label{2182101}
Let $\X$ be a resolving subcategory of $\mod R$.
The following are equivalent for an $R$-module $M$.
\begin{enumerate}[\rm(1)]
\item
$M\in\X$.
\item
$M_\p\in\add\X_\p$ for all $\p\in\Spec R$.
\item
$M_\m\in\add\X_\m$ for all $\m\in\Max R$.
\end{enumerate}
\end{prop}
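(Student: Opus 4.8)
The plan is to prove the three conditions are equivalent via the cycle $(1)\Rightarrow(2)\Rightarrow(3)\Rightarrow(1)$, with the substance lying entirely in $(3)\Rightarrow(1)$.

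The implication $(1)\Rightarrow(2)$ is immediate: if $M\in\X$, then for any $\p$ the localized module $M_\p$ is in $\X_\p\subseteq\add\X_\p$. The implication $(2)\Rightarrow(3)$ is trivial since $\Max R\subseteq\Spec R$. So everything reduces to showing that if $M_\m\in\add\X_\m$ for every maximal ideal $\m$, then $M\in\X$.

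For $(3)\Rightarrow(1)$, the idea is to perform a noetherian induction on the nonfree locus $\NF(M)$, which is a closed subset of $\Spec R$. If $\NF(M)=\emptyset$ then $M$ is projective, hence in $\X$ (resolving subcategories contain the projectives). Otherwise, $\NF(M)$ is a finite union of irreducible closed sets, so it has finitely many minimal primes; call this finite set $Z$. The hypothesis gives $M_\p\in\add\X_\p$ for all $\p\in Z$ (indeed for all $\p$, by localizing the assumption at a maximal ideal containing $\p$ and using Lemma \ref{2182108}(1), which says $\add\X_\p$ is resolving, together with the fact that localization of $\add\X_\m$ lands in $\add\X_\p$). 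Now apply Lemma \ref{2182108}(2) to $M$ and $Z$: we obtain exact sequences
$$
0 \to K \to X \to M \to 0,\qquad 0 \to L \to M\oplus K\oplus R^{\oplus n} \to X \to 0
$$
with $X\in\X$, $\NF(L)\subseteq\NF(M)$, and $\NF(L)\cap Z=\emptyset$. The last two conditions force $\NF(L)$ to be a closed subset of $\NF(M)$ missing all of its minimal primes, hence $\NF(L)\subsetneq\NF(M)$ is strictly smaller. One then checks that $L$ still satisfies hypothesis $(3)$: from the second sequence, $L_\m$ is (up to a free summand, which lies in $\add\X_\m$) a syzygy of $X_\m\in\add\X_\m$ relative to the module $(M\oplus K\oplus R^{\oplus n})_\m$, and since $\add\X_\m$ is resolving (Lemma \ref{2182108}(1)) and contains $M_\m$, $X_\m$, hence $K_\m$ from the first sequence, we get $L_\m\in\add\X_\m$. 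By the induction hypothesis applied to $L$, we conclude $L\in\X$. Then the second exact sequence shows $X$ is an extension-type object built from $M$, $K$, $L$; more precisely, from the first sequence $K\in\X$ would follow once we know it, but actually we argue directly: the second sequence gives $M\oplus K\oplus R^{\oplus n}\in\X$ because $\X$ is closed under extensions (using $L,X\in\X$ — note one may need to rearrange: the sequence $0\to L\to M\oplus K\oplus R^{\oplus n}\to X\to 0$ with $L,X\in\X$ yields the middle term in $\X$ by the extension-closure property), hence $M\in\X$ as a direct summand.

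The main obstacle is verifying that the auxiliary module $L$ inherits hypothesis $(3)$, i.e.\ that $L_\m\in\add\X_\m$ for all maximal $\m$; this requires carefully tracking the two exact sequences under localization and repeatedly invoking that $\add\X_\m$ is resolving. A secondary subtlety is confirming that $\NF(L)$ is genuinely strictly smaller than $\NF(M)$ — this is where the conditions $\NF(L)\subseteq\NF(M)$ and $\NF(L)\cap Z=\emptyset$ combine, since a proper closed subset of $\NF(M)$ that avoids every minimal prime of $\NF(M)$ is properly contained in it, which is exactly what makes the noetherian induction terminate.
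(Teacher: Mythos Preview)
Your proof is correct and follows essentially the same route as the paper: the paper proves the cycle $(1)\Rightarrow(3)\Rightarrow(2)\Rightarrow(1)$ via a minimal-counterexample argument on the nonfree locus (equivalent to your noetherian induction) together with Lemma~\ref{2182108}, exactly as you do for $(3)\Rightarrow(1)$. The only cosmetic difference is the direction of the cycle---your derivation of $M_\p\in\add\X_\p$ from condition~(3) inside the proof of $(3)\Rightarrow(1)$ is just the paper's $(3)\Rightarrow(2)$ step folded into the main argument.
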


\begin{proof}
Localization shows (1) $\Rightarrow$ (3) $\Rightarrow$ (2).
As to (2) $\Rightarrow$ (1), assume $M\notin\X$.
Then
$$
\{\,\NF(N)\mid\text{$N\in\mod R\setminus\X$ and $N_\p\in\add\X_\p$ for all $\p\in\Spec R$}\,\}
$$
is a nonempty family of closed subsets of $\Spec R$.
Since $\Spec R$ is noetherian, this set has a minimal element $\NF(L)$; see \cite[Chapter I, Exercise 1.7]{H}.
As $L\notin\X$, we see that $L$ is not projective.
Since projectivity is equivalent to local freeness, $\min\NF(L)$ is nonempty.
Applying Lemma \ref{2182108}(2) to $\min\NF(L)$, we get exact sequences $0 \to K \to X \to L \to 0$ and $0 \to H \to L\oplus K\oplus R^{\oplus n} \to X \to 0$ with $X\in\X$, $\NF(H)\subseteq\NF(L)$ and $\NF(H)\cap\min\NF(L)=\emptyset$.
Since $\min\NF(L)\ne\emptyset$, we have $\NF(H)\subsetneq\NF(L)$.
Fix a prime ideal $\p$ of $R$.
Lemma \ref{2182108}(1) shows that $\add\X_\p$ is a resolving subcategory of $\mod R_\p$.
Localizing the above two exact sequences at $\p$, we observe that $H_\p$ is in $\add\X_\p$.
The minimality of $\NF(L)$ implies that $H$ belongs to $\X$.
It is seen from the second exact sequence above that $L$ is in $\X$, which is again a contradiction.
Consequently, we have $M\in\X$.
\end{proof}

The proposition below, which is a corollary of Theorem \ref{1.1}, will be a basis of the proof of the main result of this section.

\begin{prop}\label{219945}
Let $R$ be local.
Let $M\in\FPD_0(R)$ and $N\in\FPD(R)$.
If $\pd M\le\pd N$, then $M\in\res N$.
\end{prop}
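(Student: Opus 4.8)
The plan is to replace $N$ by a module lying in $\FPD_0(R)$ that has the same projective dimension and still belongs to $\res N$, and then to finish by applying Corollary~\ref{219928} together with the filtration of Theorem~\ref{1.1}. If $N$ is free then $\pd N=0$, which forces $\pd M=0$, so $M\in\PD_0^0(R)=\add R\subseteq\res N$ and we are done; hence we may assume $N$ is non-free, i.e.\ $\m\in\NF(N)$. (This also takes care of the degenerate case $\depth R=0$, in which every module of finite projective dimension is free.)

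First I would apply Proposition~\ref{c} to $N$ at the prime $\m$, obtaining a module $X\in\res N$ with $\NF(X)=V(\m)=\{\m\}$ and $\depth X_\m=\inf\{\depth N,\,\depth R\}$. Since $\FPD(R)$ is a resolving subcategory containing $N$, we have $X\in\res N\subseteq\FPD(R)$, and together with $\NF(X)\subseteq\{\m\}$ this gives $X\in\FPD_0(R)$; moreover $X\neq0$ because $\NF(X)\neq\emptyset$. Writing $t=\depth R$ and applying the Auslander--Buchsbaum formula first to $N$ and then to $X$, one computes $\depth X_\m=\inf\{t-\pd N,\,t\}=t-\pd N$, hence $\pd X=t-\depth X=\pd N$.

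Finally, Corollary~\ref{219928} gives $\res M=\PD_0^{\pd M}(R)$ and $\res X=\PD_0^{\pd X}(R)=\PD_0^{\pd N}(R)$, while the filtration of Theorem~\ref{1.1} together with the hypothesis $\pd M\le\pd N$ yields $\PD_0^{\pd M}(R)\subseteq\PD_0^{\pd N}(R)$. Therefore $M\in\res M\subseteq\res X\subseteq\res N$, the last inclusion holding because $X\in\res N$. The only substantive ingredient is the appeal to Proposition~\ref{c} to manufacture the module $X$ with $\m$-irreducible nonfree locus; the remaining steps are bookkeeping with Auslander--Buchsbaum and Theorem~\ref{1.1}, and I do not anticipate a real obstacle beyond keeping track of the trivial case where $N$ is free.
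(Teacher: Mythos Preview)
Your proof is correct and follows essentially the same route as the paper's: reduce to the case $N$ nonfree, apply Proposition~\ref{c} at $\m$ to produce a module in $\res N\cap\FPD_0(R)$ with the same projective dimension as $N$, and finish with Corollary~\ref{219928}. The only cosmetic difference is that you invoke Corollary~\ref{219928} for both $M$ and $X$ and use the filtration, whereas the paper applies it once to the replaced $N$ and notes $M\in\PD_0^{\pd N}(R)$ directly.
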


\begin{proof}
We may assume that $M$ is nonfree.
Hence $N$ is also nonfree, and the maximal ideal $\m$ belongs to $\NF(N)$.
Proposition \ref{c} shows that there exists an $R$-module $L\in\res N$ with $\NF(L)=\{\m\}$ and $\depth L=\inf\{\depth N,\depth R\}$.
Since $N$ is of finite projective dimension and $\PD(R)$ is resolving, $\PD(R)$ contains $\res N$.
Hence $L$ is of finite projective dimension as well.
Using the Auslander-Buchsbaum formula, we have
\begin{align*}
\pd L&=\depth R-\depth L=\depth R-\inf\{\depth N,\depth R\}\\
&=\sup\{\pd N,0\}=\pd N\ge\pd M.
\end{align*}
Thus, replacing $N$ with $L$, we may assume that $N$ is in $\FPD_0(R)$.
It follows from Corollary \ref{219928} that $\res N=\PD_0^n(R)$, where $n=\pd N$.
We have $\pd M\le\pd N=n$, and therefore $M$ belongs to $\PD_0^n(R)=\res N$.
\end{proof}

The following theorem is the main result of this section, which gives a criterion for a module in $\FPD(R)$ to belong to a resolving subcategory in $\FPD(R)$.

\begin{thm}\label{4.6}
Let $\X$ be a resolving subcategory of $\mod R$ contained in $\FPD(R)$.
Then the following are equivalent for an $R$-module $M$.
\begin{enumerate}[\rm(1)]
\item
One has $M\in\X$.
\item
For every $\p\in\NF(M)$ there exists $X\in\X$ such that $\pd M_\p\le\pd X_\p$.
\end{enumerate}
\end{thm}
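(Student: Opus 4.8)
The plan is to prove the nontrivial implication (2) $\Rightarrow$ (1); the reverse implication is immediate, taking $X=M$. Note first that (2) already forces $M\in\LFPD(R)$: if $\p\notin\NF(M)$ then $M_\p$ is free, while if $\p\in\NF(M)$ then $\pd M_\p\le\pd X_\p<\infty$ for some $X\in\X\subseteq\LFPD(R)$. To prove (2) $\Rightarrow$ (1) I would run a noetherian induction on nonfree loci, patterned on the proof of Proposition \ref{2182101}. Assume (2) holds but $M\notin\X$; then the family $\{\,\NF(N)\mid N\in\mod R\setminus\X\text{ satisfies }(2)\,\}$ is a nonempty family of closed subsets of $\Spec R$, so it has a minimal member $\NF(L)$, where $L\notin\X$ satisfies (2). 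Since $L$ is nonfree, $\min\NF(L)$ is a nonempty finite set.

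The local engine of the argument is Proposition \ref{219945}. I first claim $L_\p\in\add\X_\p$ for every $\p\in\min\NF(L)$. Indeed, $\p$ is a minimal prime of the closed set $\NF(L)$, so $L_\p$ is locally free on the punctured spectrum of $R_\p$; as (2) gives $\pd_{R_\p}L_\p<\infty$, we get $L_\p\in\FPD_0(R_\p)$. Condition (2) also supplies $X\in\X$ with $\pd_{R_\p}L_\p\le\pd_{R_\p}X_\p$, and since $\X\subseteq\LFPD(R)$, the subcategory $\add\X_\p$ of $\mod R_\p$ is resolving (Lemma \ref{2182108}(1)), is contained in $\FPD(R_\p)$, and contains $X_\p$. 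Applying Proposition \ref{219945} over $R_\p$ then yields $L_\p\in\add\X_\p$.

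With this in hand, apply Lemma \ref{2182108}(2) to $L$ and $Z=\min\NF(L)$ to obtain exact sequences $0\to K\to W\to L\to0$ and $0\to H\to L\oplus K\oplus R^{\oplus n}\to W\to0$ with $W\in\X$, $\NF(H)\subseteq\NF(L)$, and $\NF(H)\cap\min\NF(L)=\emptyset$; the latter two properties and $\min\NF(L)\neq\emptyset$ force $\NF(H)\subsetneq\NF(L)$. Localizing the two sequences at a prime $\p$ and chasing projective dimensions gives $\pd H_\p\le\max\{\pd L_\p,\pd W_\p\}$ for all $\p$. Hence $H$ again satisfies (2): for $\p\in\NF(H)\subseteq\NF(L)$, if $X\in\X$ witnesses (2) for $L$ at $\p$, then $X\oplus W\in\X$ witnesses it for $H$. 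By minimality of $\NF(L)$ we conclude $H\in\X$; the second exact sequence together with the closure of $\X$ under extensions and direct summands then forces $L\in\X$, contradicting $L\notin\X$. Therefore $M\in\X$.

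I expect the main difficulty to lie in the last step --- checking that the approximating module $H$ produced by Lemma \ref{2182108}(2) still satisfies condition (2) --- since this hinges on the projective-dimension estimate extracted from the two exact sequences together with the containment $\NF(H)\subseteq\NF(L)$, which lets the witnesses for $L$ be reused. A secondary point requiring care is the verification that $L_\p$ lies in $\FPD_0(R_\p)$, not merely in $\FPD(R_\p)$, so that Proposition \ref{219945} applies; this is exactly where minimality of $\p$ in $\NF(L)$ is used.
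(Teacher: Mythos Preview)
Your proof is correct and uses essentially the same ingredients as the paper: Proposition \ref{219945} as the local engine and Lemma \ref{2182108}(2) for the reduction step, with the same verification that the approximating module $H$ inherits condition (2) via the bound $\pd H_\p\le\max\{\pd L_\p,\pd W_\p\}$. The only organizational difference is that the paper first reduces to the local case via Proposition \ref{2182101} and then inducts on $\dim\NF(M)$, whereas you run a single global noetherian induction on nonfree loci; this simply folds the mechanism of Proposition \ref{2182101} into the argument.
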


\begin{proof}
The implication (1) $\Rightarrow$ (2) is trivial.
To prove the opposite implication, assume that the condition (2) holds.
By Proposition \ref{2182101} it suffices to show that $M_\m\in\add\X_\m$ for all $\m\in\Max R$.
It holds that for every $\p R_\m\in\NF(M_\m)$ there exists $X_\m\in\add\X_\m$ such that $\pd(M_\m)_{\p R_\m}\le\pd(X_\m)_{\p R_\m}$.
By Lemma \ref{2182108}(1) the subcategory $\add\X_\m$ of $\mod R_\m$ is resolving.
Hence we may assume that $(R,\m)$ is local.
Let us deduce (1) by induction on $m:=\dim\NF(M)$.

When $m=-\infty$, the module $M$ is free, and belongs to $\X$.
When $m=0$, we have $\NF(M)=\{\m\}$.
Hence $M$ is in $\FPD_0(R)$, and by (2) there exists $X\in\X$ with $\pd M\le\pd X$.
Proposition \ref{219945} implies that $M$ belongs to $\res X$, and thus $M\in\X$.

Now, let $m\ge1$.
Let $\p\in\min\NF(M)$.
Then we have $\dim\NF(M_\p)=0$.
Lemma \ref{2182108}(1) and the basis of the induction show that $M_\p\in\add\X_\p$.
Hence we can apply Lemma \ref{2182108}(2) to the nonempty finite set $\min\NF(M)$ (since $\NF(M)$ is closed and in general there are only finitely many minimal primes over an ideal, the set $\min\NF(M)$ is finite), and obtain exact sequences
\begin{align}
& 0 \to K \to Y \to M \to 0, \\
&\label{2191047} 0 \to L \to M\oplus K\oplus R^{\oplus l} \to Y \to 0
\end{align}
with $Y\in\X$, $\NF(L)\subseteq\NF(M)$ and $\NF(L)\cap\min\NF(M)=\emptyset$.
We easily see that $L\in\FPD(R)$ and that $\dim\NF(L)<m$.

Let $\p\in\NF(L)$.
Then $\p$ is in $\NF(M)$, and hence $\pd M_\p\le\pd W_\p$ for some $W\in\X$ by (2).
There are exact sequences $0 \to K_\p \to Y_\p \to M_\p \to 0$ and $0 \to L_\p \to M_\p\oplus K_\p\oplus R_\p^{\oplus l} \to Y_\p \to 0$, which yield $\pd L_\p\le\sup\{\pd M_\p,\pd Y_\p\}$.
Thus the module $L$ satisfies the condition (2).
Applying the induction hypothesis to $L$, we get $L\in\X$.
By \eqref{2191047} we observe $M\in\X$.
\end{proof}

Finally we recall the statement of Theorem \ref{1.4} from the Introduction and give a proof:

\begin{thmd}
Let $R$ be a commutative noetherian ring.
Suppose that $\Tor$-rigidity holds for $\PD(R_\p)$ for all $\p \in \Spec R$.
The following are equivalent for $M,N \in \FPD(R)$:
\begin{enumerate}[\rm(1)]
\item
$\pd M_{\p} \leq\max\{\pd N_{\p},0\}$ for all $\p \in \Spec R$.
\item
$M\in\res N$.
\item
$\Supp \Tor_i^R(M,X) \subseteq \Supp \Tor_i^R(N,X)$ for all $i>0$ and all $X\in \mod R$.
\end{enumerate}
\end{thmd}

\begin{proof}[{\bf Proof of Theorem \ref{1.4}}]
The implication $(1) \Rightarrow (2)$ is a consequence of Theorem \ref{4.6}.
Indeed, every prime ideal $\p\in\NF(M)$ satisfies $\pd M_\p\le\pd N_\p$ by (1).

As to the implication $(3) \Rightarrow (1)$, assume (3) and let $X=R/\p$.
Then it is seen for each $i>0$ that $\Tor_i^{R_\p}(N_\p,\kappa(\p))=0$ implies $\Tor_i^{R_\p}(M_\p,\kappa(\p))=0$.
Thus (1) follows.

Assume $(2)$ and pick a prime ideal $\p$ which is not in $\Supp \Tor_i^R(N,X)$.
Then $\Tor_i^{R_{\p}} (N_{\p}, X_{\p})= 0$, thus  $\Tor_j^{R_{\p}} (N_{\p}, X_{\p})= 0$ for all $j\geq i$ by $\Tor$-rigidity.
Since $ M_\p \in \res N_\p$, it is easily seen that $\Tor_i^{R_{\p}} (M_{\p}, X_{\p})= 0$.
Therefore $\p$ is not in $\Supp\Tor_i^R(M,X)$.
\end{proof}

\section{Dominant resolving subcategories over a Cohen-Macaulay ring}\label{drsoacr}

In this section, we consider dominant resolving subcategories over a Cohen-Macaulay ring.
In the first three results below, we investigate, over a Cohen-Macaulay local ring, the structure of resolving closures containing syzygies of the residue field.

\begin{lem}\label{1'}
Let $(R,\m,k)$ be a Cohen-Macaulay local ring.
Let $\X$ be a resolving subcategory of $\mod R$.
Suppose that $\X$ contains an $R$-module of depth $0$.
If $\syz^nk$ is in $\X$ for some $n\ge0$, then $k$ is in $\X$.
\end{lem}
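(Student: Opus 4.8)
The plan is to induct on $n$, the base case $n=0$ being trivial. So suppose $n\ge 1$ and $\syz^n k\in\X$; I want to produce $\syz^{n-1}k$ in $\X$, after which the induction hypothesis finishes the argument. Let $C\in\X$ be a module with $\depth C=0$. The key device will be Proposition \ref{c}: applied to $C$ and the maximal ideal $\m\in\NF(C)$ (note $C$ is nonfree since $\depth C=0<\depth R$ as long as $\depth R>0$; the case $\depth R=0$ must be handled separately, but there $k$ itself has a free resolution issue — actually if $\depth R = 0$ then $\m\in\Ass R$, and one argues directly), it yields an $R$-module $D\in\res C\subseteq\X$ with $\NF(D)=\{\m\}$ and $\depth D=\inf\{\depth C,\depth R\}=0$. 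Thus I may assume from the outset that my depth-zero module $C$ is locally free on the punctured spectrum, i.e. $C$ has finite length up to removing — no: $\NF(C)=\{\m\}$ only forces $C_\p$ free for $\p\neq\m$, not finite length. Still, $\m\in\Ass C$ since $\depth C=0$, so there is an embedding $k\hookrightarrow C$.

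Now the heart of the argument: I want to manufacture $\syz^{n-1}k$ from $\syz^n k$ and something of depth $0$ inside $\X$. Consider the short exact sequence $0\to k\to C\to C/k\to 0$. Taking $n$-th syzygies and using Proposition \ref{1}(1), there is an exact sequence $0\to\syz^n k\to\syz^n C\oplus R^{\oplus b}\to\syz^n(C/k)\to 0$, hence $\syz^n(C/k)\in\res(\syz^n k\oplus C)\subseteq\X$ provided $\syz^n C\in\X$ — but $C\in\X$ and $\X$ is resolving, so $\syz^n C\in\X$, and therefore $\syz^n(C/k)\in\X$. The point of this maneuver is that it lets me replace $k$ by an arbitrary finite-length (or depth-zero) submodule situation; more usefully, iterating, any module built from $k$ by extensions has its $n$-th syzygy in $\X$. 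The real step down from $n$ to $n-1$ should come from a different exact sequence: take a minimal free cover $0\to\syz k\to R^{\oplus r}\to k\to 0$ and apply $\syz^{n-1}$, giving $0\to\syz^n k\to(\syz^{n-1}R^{\oplus r})\oplus R^{\oplus *}\to\syz^{n-1}k\to 0$; since $\syz^{n-1}$ of a free module is free, this reads $0\to\syz^n k\to R^{\oplus *}\to\syz^{n-1}k\to 0$. This exhibits $\syz^{n-1}k$ as an extension of nothing useful by $\syz^n k$ going the wrong way — $\syz^n k\to R^{\oplus *}$ is not split. So instead I should use that $\X$ is closed under syzygies only in one direction and look for a sequence with $\syz^{n-1}k$ on the \emph{left}. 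Use $0\to\syz^{n-1}k\to F\to\syz^{n-2}k\to 0$: that again goes the wrong way. The correct mechanism is surely to combine the depth-zero hypothesis with the Cohen-Macaulay property so that $\m$ is generated by a system of parameters, i.e. a regular sequence $\xx=x_1,\dots,x_d$, and to run an induction on $d=\dim R$ by reducing modulo $x_1$: pass to $R/x_1R$, where depth drops by one and syzygies of $k$ relate to syzygies over the quotient, using that $C$ (after the Proposition \ref{c} reduction) contains $k$ and hence $C/x_1 C$ sees the Koszul homology.

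The main obstacle I anticipate is exactly this passage between $R$ and $R/x_1R$: controlling how $\syz^n_R k$ behaves under $-\otimes_R R/x_1R$ when $x_1$ is a nonzerodivisor on $R$ but not on the relevant modules, and ensuring the resolving subcategory $\X$ localizes/quotients appropriately (the quotient operation on resolving subcategories is more delicate than localization, which was handled in Proposition \ref{2182101}). An alternative that sidesteps quotients: show directly that $\X$ contains a module of finite length, then invoke Proposition \ref{c} again to get a finite-length module $L\neq 0$ in $\X$ whose syzygies dominate those of $k$, and finally apply a ``finite length'' analogue of Proposition \ref{219945} or Proposition \ref{8} — indeed Proposition \ref{8} says $\res(\tr\syz^m L)=\res(\tr\syz^m k)$ for finite-length $L$, and a dual statement for syzygies themselves should give $\syz^{n-1}k\in\res(\syz^{n-1}L\oplus\cdots)$ once we know the relevant $\Ext$ modules vanish or have finite length. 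I expect the cleanest route is: (i) reduce via Proposition \ref{c} to $C$ with $\NF(C)=\{\m\}$, $\depth C=0$; (ii) note $C\in\FPD_0(R)$ fails in general — $C$ need not have finite projective dimension — so instead use that $\syz^n C$ still satisfies $\NF\subseteq\{\m\}$; (iii) build the extension $0\to k\to C\to C/k\to 0$ and its syzygy sequences to transfer membership; (iv) induct on $n$ using the structure of minimal free resolutions over a Cohen-Macaulay ring where $\depth R\ge n$ forces $\Ext^i(k,R)=0$ for $i<\depth R$, which is the ingredient that makes transposes of syzygies of $k$ behave (as in Lemma \ref{4}), ultimately letting one ``peel off'' one syzygy at a time. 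Verifying step (iv) rigorously — that the depth-zero module in $\X$ plus the syzygy-closedness genuinely forces the descent $\syz^n k\leadsto\syz^{n-1}k$ and not merely $\syz^n k\leadsto\syz^{n}k$ — is where the real work lies.
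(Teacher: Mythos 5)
Your reduction via Proposition \ref{c} to a depth-zero module $C\in\X$ with $\NF(C)=\{\m\}$, and the observation that $\m\in\Ass C$ gives an embedding $k\hookrightarrow C$, both match the paper's setup. (A small point: you do not need $C$ to be nonfree, only locally free on the punctured spectrum, so the $\depth R=0$ case requires no separate handling — a free module of depth zero works just as well.) After that, however, the proposal records a sequence of attempts none of which produces the descent $\syz^ik\in\X\Rightarrow\syz^{i-1}k\in\X$, and the last two paragraphs openly concede that the key mechanism is still missing. That concession is accurate: there is a genuine gap.

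The missing step is the \emph{first} exact sequence of Proposition \ref{1}(1), the pullback sequence. From $0\to k\to C\to D\to 0$ (with $D:=C/k$) one obtains
$0 \to \syz D \to k\oplus R^{\oplus a} \to C \to 0$,
which places $k$ in the middle. Applying Proposition \ref{1}(1) repeatedly yields, for each $i\ge 1$,
$0 \to \syz^iD \to \syz^{i-1}k\oplus R^{\oplus l} \to \syz^{i-1}C \to 0$.
Now $D$ is locally free on the punctured spectrum (being a quotient of two such modules), so $D\in\res_Rk$ by \cite[Theorem 2.4]{stcm}, hence $\syz^iD\in\res(\syz^ik)$. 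If $\syz^ik\in\X$, then both outer terms lie in $\X$, and closure under extensions and direct summands forces $\syz^{i-1}k\in\X$. Iterating down from $i=n$ gives $k\in\X$. By contrast, your computation takes syzygies of $0\to k\to C\to D\to 0$ directly, producing $0 \to \syz^n k \to \syz^n C\oplus R^{\oplus *} \to \syz^n D \to 0$: this shows $\syz^nD\in\X$ (harmless but unhelpful) and never traps a \emph{lower} syzygy of $k$. The alternative routes you gesture at — reducing modulo a regular element, or a syzygy analogue of Proposition \ref{8} — are not carried out, and you correctly anticipate that they would require additional machinery (the paper reserves the regular-sequence reduction for Proposition \ref{5'}, not this lemma). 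Supplying the pullback sequence above is what closes the argument.
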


\begin{proof}
Let $X$ be an $R$-module in $\X$ with $\depth X=0$.
If $\NF(X)=\emptyset$, then $X$ is free.
If $\NF(X)\ne\emptyset$, then $\m$ belongs to it, and applying Proposition \ref{c} to $\m$, we find a module $X'\in\res X$ with $\depth X'=0$ and $\NF(X')=\{\m\}$.
Thus, we may assume that $X$ is locally free on the punctured spectrum of $R$.
As $\depth X=0$, there is an exact sequence
\begin{equation}\label{11302156}
0 \to k \to X \to C \to 0,
\end{equation}
and by Proposition \ref{1}(1) we get an exact sequence $0 \to \syz C \to k\oplus R^{\oplus m} \to X \to 0$.
Fix an integer $i\ge1$.
Applying Proposition \ref{1}(1) ($(i-1)$ times) gives an exact sequence $0 \to \syz^iC \to \syz^{i-1}k\oplus R^{\oplus l} \to \syz^{i-1}X \to 0$ for some $l\ge0$.
Since $C$ is locally free on the punctured spectrum by \eqref{11302156}, it is in $\res_Rk$ by \cite[Theorem 2.4]{stcm}, whence $\syz^iC$ is in $\res_R(\syz^ik)$.
Thus, $\syz^ik\in\X$ implies $\syz^{i-1}k\in\X$ for each $i\ge1$.
The assertion follows.
\end{proof}

\begin{prop}\label{5'}
Let $(R,\m,k)$ be a $d$-dimensional Cohen-Macaulay local ring.
Let $M$ be an $R$-module of depth $t$.
Then $\syz^tk$ belongs to $\res(M\oplus\syz^nk)$ for all $n\ge0$.
\end{prop}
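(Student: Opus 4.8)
The plan is to reduce to the situation where $M$ is locally free on the punctured spectrum of $R$, and then to induct on the dimension $d$ of $R$. First I would dispose of the trivial cases: if $M=0$ then $t=\infty$ and $\syz^t k = 0 \in \res(M\oplus\syz^n k)$ by convention, while if $M$ is free then $t=d$ and we must show $\syz^d k\in\res(\syz^n k)$, which is the content of the case $M$ free treated below (or follows from the fact that $\syz^d k$ is a high syzygy, hence maximal Cohen-Macaulay, and a standard argument with $\res(\syz^n k)$). So assume $M$ is nonfree. Since $\depth M = t$, either $t = d$ (so $M$ is maximal Cohen-Macaulay) or $\m\in\NF(M)$; in the latter case Proposition \ref{c} applied to $\m$ produces $M'\in\res M$ with $\NF(M')=\{\m\}$ and $\depth M' = \inf\{\depth M,\depth R\} = \inf\{t,d\} = t$. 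Replacing $M$ by $M'$ we may assume $M$ is locally free on the punctured spectrum, and it suffices to show $\syz^t k\in\res(M\oplus\syz^n k)$.

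Next I would invoke \cite[Theorem 2.4]{stcm} (used already in the proof of Lemma \ref{1'}): since $M$ is locally free on the punctured spectrum, $M\in\res_R k$, and hence every syzygy $\syz^j M\in\res(\syz^j k)$. The idea now is to climb up: I want to produce, for each $0\le j\le t$, a module in $\res(M\oplus\syz^n k)$ whose depth is $j$ and which is locally free on the punctured spectrum, culminating at $j=t$ where Corollary/Theorem-type statements about $\FPD_0$-style filtrations — or rather the analogous structure for $\CM$-modules — pin down the resolving closure. More concretely: from $\depth M = t$ one gets an exact sequence $0\to\syz^t(M/\text{(sufficiently generic)})\to F\to\cdots$, or better, one uses that $\syz^{-}$ can only raise depth; the key mechanism (as in Lemma \ref{1'}) is that an exact sequence $0\to k\to X\to C\to 0$ with $C$ locally free on the punctured spectrum gives $\syz^i C\in\res(\syz^i k)$, so membership of one syzygy of $k$ forces membership of lower ones. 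Here I would run this in the reverse direction from $M$: build a short exact sequence relating $\syz^t k$ to $M$ and lower syzygies of $k$, all of which are controllable.

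The main obstacle — and the step I expect to require real work — is the induction on $d$ that handles primes strictly below $\m$. After reducing to $M$ locally free on the punctured spectrum, one still needs to know that $\syz^t k$ itself (not just some module in $\res(M\oplus\syz^n k)$) lies in the resolving closure; the bookkeeping connecting $\depth M = t$ to the precise syzygy index $t$ of $k$ is delicate, because passing through the transpose/syzygy machinery of Section \ref{resFPD} tends to shift indices. I anticipate the clean route is: localize at a minimal prime of $\NF$ of an auxiliary module to invoke the inductive hypothesis in lower dimension, combine with Lemma \ref{2182108} to lift back, and then use Lemma \ref{1'}-style descent (which needs that $\X = \res(M\oplus\syz^n k)$ contains a depth-$0$ module — which it does, namely any high enough cosyzygy-type construction, or we simply note $\syz^n k$ has the right local behaviour) to step down from whatever syzygy index we land on to exactly $t$. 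The combinatorics of that descent, ensuring we land on $\syz^t k$ and not $\syz^{t+c} k$, is where the care is needed.
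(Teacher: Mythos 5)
There is a genuine gap here, and you have in fact put your finger on it yourself: the step where you ``land on $\syz^t k$ and not $\syz^{t+c}k$'' is never carried out, and the plan you sketch (localize at a minimal prime, lift back via Lemma~\ref{2182108}, descend via Lemma~\ref{1'}) has no concrete mechanism to produce that exact index $t$. Your reduction via Proposition~\ref{c} to the case where $M$ is locally free on the punctured spectrum with $\depth M=t$ is legitimate, and knowing $M\in\res_R k$ by \cite[Theorem~2.4]{stcm} is correct, but it points the wrong way: you need $\syz^t k$ in the closure of $M$, not $M$ in the closure of $k$. The vague ``climb up by depth $j$'' and the proposed induction on $d$ never become an argument, and there is no short exact sequence on offer that links $\syz^t k$ to $M$ in the required direction.

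The paper's proof proceeds by an entirely different and much more direct device that you did not anticipate: choose a sequence $\xx=x_1,\dots,x_t$ of length exactly $t=\depth M$ which is regular on both $R$ and $M$ and is chosen ``superficially'' (each $\overline{x_i}\notin(\m/(x_1,\dots,x_{i-1}))^2$). Writing $N=M\oplus\syz^n_R k$ with $n\ge d$, one passes to $\overline R=R/(\xx)$. The key technical input is \cite[Corollary~5.3]{syz2}, which gives the decomposition
\[
\syz^n_R k/\xx\,\syz^n_R k \cong \bigoplus_{j=0}^{t}\bigl(\syz^{n-j}_{\overline R}k\bigr)^{\oplus\binom{t}{j}},
\]
so $\res_{\overline R}(N/\xx N)$ contains a syzygy of $k$ over $\overline R$ and also $M/\xx M$, which has depth zero; Lemma~\ref{1'} then forces $k\in\res_{\overline R}(N/\xx N)$, hence $\syz^t_R k\in\res_R\bigl(\syz^t_R(N/\xx N)\bigr)$. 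Finally the Koszul complex of $\xx$ on $N$ together with \cite[Lemma~4.3]{stcm} gives $\syz^t_R(N/\xx N)\in\res_R N$, and the correct index $t$ appears exactly because $\xx$ has length $t$. This regular-sequence-and-quotient-ring argument, with the precise decomposition of $\syz^n k$ under the quotient, is the missing idea in your proposal.
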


\begin{proof}
As $\res_R(M\oplus\syz_R^{i+1}k)\subseteq\res_R(M\oplus\syz_R^ik)$ for each $i\ge0$, we may assume $n\ge d$.
Then $\syz_R^nk$ is a maximal Cohen-Macaulay $R$-module.
Let $\xx=x_1,\dots,x_t$ be a sequence of elements of $R$ having the following properties:
\begin{enumerate}[(1)]
\item
The sequence $\xx$ is regular on both $M$ and $R$,
\item
The element $\overline{x_i}\in\m/(x_1,\dots,x_{i-1})$ is not in $(\m/(x_1,\dots,x_{i-1}))^2$ for $1\le i\le t$.
\end{enumerate}
We can actually get such a sequence by choosing an element of $\m$ outside the ideal $\m^2+(x_1,\dots,x_{i-1})$ and the prime ideals in $\Ass_RM/(x_1,\dots,x_{i-1})M\cup\Ass_RR/(x_1,\dots,x_{i-1})$ for each $1\le i\le t$.
Putting $N=M\oplus\syz_R^nk$, we see that $\xx$ is an $N$-sequence.
Applying \cite[Corollary 5.3]{syz2} repeatedly, we have an isomorphism
$$
\syz_R^nk/\xx\syz_R^nk\cong\bigoplus_{j=0}^t(\syz_{R/(\xx)}^{n-j}k)^{\oplus\binom{t}{j}}.
$$
Hence $\res_{R/(\xx)}(N/\xx N)=\res_{R/(\xx)}(M/\xx M\oplus\syz_R^nk/\xx\syz_R^nk)$ contains $\syz_{R/(\xx)}^nk$.
Since $\res_{R/(\xx)}(N/\xx N)$ contains the module $M/\xx M$ of depth zero, it contains $k$ by Lemma \ref{1'}.
It follows that $\syz_R^tk$ belongs to $\res_R(\syz^t_R(N/\xx N))$.
The Koszul complex of $\xx$ with respect to $N$ gives an exact sequence
$$
0 \to N^{\oplus\binom{t}{t}} \to N^{\oplus\binom{t}{t-1}} \to \cdots \to N^{\oplus\binom{t}{1}} \to N^{\oplus\binom{t}{0}} \to N/\xx N \to 0,
$$
which shows $\syz^t_R(N/\xx N)\in\res_RN$ by \cite[Lemma 4.3]{stcm}.
Therefore $\syz_R^tk\in\res_RN$.
\end{proof}

\begin{cor}\label{2'}
Let $R$ be a Cohen-Macaulay local ring with residue field $k$.
Let $M$ be an $R$-module of depth $t$ that is locally free on the punctured spectrum of $R$.
Then $\res(M\oplus\syz^nk)=\res(\syz^tk\oplus\syz^nk)$ for all $n\ge0$.
\end{cor}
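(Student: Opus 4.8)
My plan is to establish the two inclusions separately, reducing everything to a single statement about resolving closures of syzygies of the residue field.

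The inclusion $\res(\syz^tk\oplus\syz^nk)\subseteq\res(M\oplus\syz^nk)$ I would get for free: Proposition \ref{5'} gives $\syz^tk\in\res(M\oplus\syz^nk)$, and $\syz^nk$ is a direct summand of $M\oplus\syz^nk$, so both generators of the left-hand category already lie in the right-hand one. (If $M$ is free the statement is degenerate: then $M\in\add R$, $\depth M=\depth R$, and both sides collapse to $\res(\syz^nk)$ or to $\res(\syz^tk)$ once one shifts syzygies, so I may assume $M$ is nonzero and nonfree.)

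For the reverse inclusion it is enough to show $M\in\res(\syz^tk\oplus\syz^nk)$, and since $\syz^tk$ itself lies in $\res(\syz^tk\oplus\syz^nk)$, the whole problem boils down to the assertion: \emph{an $R$-module that is locally free on the punctured spectrum and has depth $t$ belongs to $\res(\syz^tk)$.} I would deduce this from \cite[Theorem 2.4]{stcm}, which identifies $\res_R(\syz^t_Rk)$ with the subcategory of modules locally free on the punctured spectrum and of depth $\ge t$; its case $t=0$ — $\res_Rk$ is exactly the subcategory of modules locally free on the punctured spectrum — is the form already invoked in Lemma \ref{1'} and in the proof of Proposition \ref{5'}. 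If one instead wants to derive the depth-$t$ statement from the $t=0$ case, the plan is a reduction modulo a regular sequence mirroring the proof of Proposition \ref{5'}: choose $\xx=x_1,\dots,x_t$ regular on $M$ and on $R$ with $x_i\notin(\m/(x_1,\dots,x_{i-1}))^2$, put $\bar R=R/(\xx)$, observe that $M/\xx M$ is locally free on the punctured spectrum of $\bar R$ and of depth $0$, hence $M/\xx M\in\res_{\bar R}k$, and transfer back. Since $\pd_R\bar R=t$ the module $\syz^t_R\bar R$ is free, so a horseshoe argument shows that taking $t$-th $R$-syzygies sends resolving operations over $\bar R$ to resolving operations over $R$, giving $\syz^t_R(M/\xx M)\in\res_R(\syz^t_Rk)$; finally, the minimality of $\xx$ together with \cite[Corollary 5.3]{syz2} lets one pin down $\syz^t_R(M/\xx M)$ closely enough to recognize $M$ inside it as a direct summand (up to free summands), so that $M\in\res_R(\syz^t_R(M/\xx M))\subseteq\res_R(\syz^t_Rk)$.

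The main obstacle is precisely this transfer step. Passing to $\bar R$ and back tends to produce containments in the wrong direction — the Koszul complex on $\xx$ only yields $\syz^t_R(M/\xx M)\in\res_R M$, not $M\in\res_R(\syz^t_R(M/\xx M))$ — so one genuinely needs the sharp structural control of $R$-syzygies of $\bar R$-modules that the minimal choice of $\xx$ makes available. Everything else — the easy inclusion, the free case, and the equivalence of the reverse inclusion with ``$M\in\res(\syz^tk)$'' — is formal.
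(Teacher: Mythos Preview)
Your proposal is correct and takes essentially the same approach as the paper: one inclusion by Proposition \ref{5'}, the other by \cite[Theorem 2.4]{stcm} giving $M\in\res(\syz^tk)$. The paper's proof is exactly these two lines; your extended discussion of an alternative reduction modulo a regular sequence is unnecessary since the cited theorem already gives the depth-$t$ statement directly, so the ``main obstacle'' you identify never actually arises.
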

\begin{proof}
By \cite[Theorem 2.4]{stcm} the module $M$ belongs to $\res\syz^tk$, which gives the inclusion $\res(M\oplus\syz^nk)\subseteq\res(\syz^tk\oplus\syz^nk)$.
The other inclusion follows from Proposition \ref{5'}.
\end{proof}

We slightly extend the definition of a dominant resolving subcategory as follows.

\begin{dfn}
Let $W$ be a subset of $\Spec R$, and let $\X$ be a resolving subcategory of $\mod R$.
We say that $\X$ is {\em dominant} on $W$ if for all $\p\in W$ there exists $n\ge0$ such that $\syz^n\kappa(\p)\in\add\X_\p$.
(An dominant resolving subcategory of $\mod R$ is nothing but a resolving subcategory that is dominant on $\Spec R$.)
\end{dfn}

The main result of this section is the following theorem, which yields a criterion for a module to be in a dominant resolving subcategory.

\begin{thm}\label{g}
Let $R$ be a Cohen-Macaulay ring.
Let $\X$ be a resolving subcategory of $\mod R$.
Let $M$ be an $R$-module such that $\X$ is dominant on $\NF(M)$.
Then the following are equivalent:
\begin{enumerate}[\rm(1)]
\item
$M$ belongs to $\X$.
\item
$\depth M_\p\ge\min_{X\in\X}\{\depth X_\p\}$ holds for all $\p\in\NF(M)$.
\end{enumerate}
\end{thm}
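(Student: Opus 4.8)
The plan is to follow the template already established in Theorem \ref{4.6}: reduce to the local case via Lemma \ref{2182108}(1) and Proposition \ref{2182101}, then run an induction on $m:=\dim\NF(M)$. The implication (1)$\Rightarrow$(2) is immediate, so the content is (2)$\Rightarrow$(1). For the localization step I need to check that the hypothesis ``$\X$ is dominant on $\NF(M)$'' and condition (2) both localize well. If $\p\in\Spec R$ and $\q\in\NF(M_\p)=\NF(M)\cap\Spec R_\p$, then dominance of $\X$ at $\q$ gives $\syz^n\kappa(\q)\in\add\X_\q$, and since $(\add\X_\p)_\q=\add\X_\q$ this says $\add\X_\p$ is dominant on $\NF(M_\p)$; similarly condition (2) passes to $\add\X_\p$ because $\min_{X\in\X}\depth X_\q=\min_{X'\in\add\X_\p}\depth X'_\q$. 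So we may assume $(R,\m)$ is Cohen-Macaulay local of dimension $d$.

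In the local case I would argue by induction on $m=\dim\NF(M)$. If $m=-\infty$ then $M$ is free and lies in $\X$. If $m=0$ then $\NF(M)=\{\m\}$, so $M$ is locally free on the punctured spectrum; let $t=\depth M$. By dominance at $\m$ there is $n\ge0$ with $\syz^n k\in\X$ (note $\add\X_\m=\X$ since $R$ is local). Condition (2) gives some $X\in\X$ with $\depth X\le t$; replacing $X$ using Proposition \ref{c} at $\m$ I may assume $X$ is locally free on the punctured spectrum with $\depth X=\inf\{\depth X,\depth R\}\le t$, i.e.\ $\depth X=s\le t$. Now Corollary \ref{2'} applied to $X$ gives $\syz^s k\in\res(X\oplus\syz^n k)\subseteq\X$, and then $\syz^t k\in\res(\syz^s k)\subseteq\X$ (a $t$-th syzygy of $k$ is a $(t-s)$-th syzygy of $\syz^s k$, up to free summands, since $R$ is Cohen-Macaulay and these are all the relevant syzygies). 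Finally $M$ is locally free on the punctured spectrum of depth $t$, so by Corollary \ref{2'} again (with the module there taken to be $M$, and $\syz^n k$ replaced by anything in $\X$, e.g.\ using $\syz^t k\in\X$), $M\in\res(\syz^t k\oplus\syz^t k)=\res(\syz^t k)\subseteq\X$. Actually more directly: $M\in\res\syz^t k$ by \cite[Theorem 2.4]{stcm} since $M$ is locally free on the punctured spectrum of depth $t$, and $\syz^t k\in\X$, so $M\in\X$.

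For the inductive step $m\ge1$, I would copy the mechanism of Theorem \ref{4.6} verbatim. Pick $\p\in\min\NF(M)$; then $\dim\NF(M_\p)=0$, and the base case together with Lemma \ref{2182108}(1) (and the fact that $\add\X_\p$ is dominant on $\NF(M_\p)$, plus condition (2) localizes) shows $M_\p\in\add\X_\p$. Apply Lemma \ref{2182108}(2) to the finite set $\min\NF(M)$ to get exact sequences $0\to K\to Y\to M\to0$ and $0\to L\to M\oplus K\oplus R^{\oplus l}\to Y\to0$ with $Y\in\X$, $\NF(L)\subseteq\NF(M)$ and $\NF(L)\cap\min\NF(M)=\emptyset$, so $\dim\NF(L)<m$. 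For $\p\in\NF(L)\subseteq\NF(M)$, the same depth-counting as in Theorem \ref{4.6} — using $\depth L_\p\ge\inf\{\depth(M\oplus K)_\p,\depth Y_\p+1\}$ from the second sequence and the relation between $K$ and $M,Y$ from the first — gives $\depth L_\p\ge\min_{X\in\X}\{\depth X_\p\}$, so $L$ satisfies condition (2); and $\X$ is still dominant on $\NF(L)$ since $\NF(L)\subseteq\NF(M)$. By induction $L\in\X$, and then the second exact sequence forces $M\in\X$.

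The main obstacle will be the depth bookkeeping: in the base case I must make sure the passage from ``some $X\in\X$ with small depth'' to ``$\syz^t k\in\X$'' is airtight, in particular that Corollary \ref{2'} is being applied to modules genuinely locally free on the punctured spectrum (this is why the Proposition \ref{c} replacement of $X$ is needed), and that over a Cohen-Macaulay local ring syzygies of $k$ of different orders are related by $\syz^a k$ being (up to free summands) an $(a-b)$-th syzygy of $\syz^b k$ whenever $a\ge b\ge0$ — which holds because $\depth\syz^b k=\min\{b,d\}\le a$ only matters for the Corollary step, not here. The inductive step is then purely formal, reusing the argument of Theorem \ref{4.6} with ``$\pd M_\p\le\pd X_\p$'' replaced throughout by ``$\depth M_\p\ge\depth X_\p$'' and the base case replaced by the one above.
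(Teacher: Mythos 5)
Your proposal is correct and follows essentially the same route as the paper: reduce to the local case via Lemma \ref{2182108}(1) and Proposition \ref{2182101}, induct on $\dim\NF(M)$, handle the base case $\NF(M)=\{\m\}$ by producing $\syz^tk\in\X$ and invoking \cite[Theorem 2.4]{stcm}, and copy the mechanism of Theorem \ref{4.6} for the inductive step. The one place where you take a small detour is in the base case: you replace the witness $X\in\X$ with $\depth X\le t$ by a module locally free on the punctured spectrum via Proposition \ref{c}, so that Corollary \ref{2'} applies to it. This is unnecessary and creates a tiny gap: Proposition \ref{c} requires a nonempty nonfree locus, so if the chosen $X$ is free (which forces $s=t=d$) the replacement is not available and your wording as written does not cover it. The paper avoids this by applying Proposition \ref{5'} directly to $X$ — that proposition places no local-freeness hypothesis on the module, and yields $\syz^sk\in\res(X\oplus\syz^nk)$ for arbitrary $X$ of depth $s$. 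If you simply drop the Proposition \ref{c} step and use Proposition \ref{5'} on $X$ as is, your argument becomes identical to the paper's. The rest (the syzygy chain $\syz^tk\in\res\syz^sk$, the appeal to \cite[Theorem 2.4]{stcm}, and the inductive step with the two exact sequences from Lemma \ref{2182108}(2) and the depth bookkeeping for $L$) is all sound.
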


\begin{proof}
The implication $(1)\Rightarrow(2)$ is trivial.
To show $(2)\Rightarrow(1)$, we may assume that $R$ is local with maximal ideal $\m$ by Lemma \ref{2182108}(1) and Proposition \ref{2182101}.
Let $M$ be an $R$-module with $\depth M_\p\ge\min_{X\in\X}\{\depth X_\p\}$ for all $\p\in\Spec R$.
We show by induction on $m:=\dim\NF(M)$ that $M$ belongs to $\X$.
When $m=-\infty$, the module $M$ is projective, and is in $\X$.
When $m=0$, we have $\NF(M)=\{\m\}$.
By assumption, $\syz^nk$ is in $\X$ for some $n\ge0$.
As $M$ is locally free on the punctured spectrum, it follows from Corollary \ref{2'} that $\res(M\oplus\syz^nk)=\res(\syz^tk\oplus\syz^nk)$, where $t=\depth M$.
We have $t=\depth M\ge\min_{X\in\X}\{\depth X\}$, which gives an $R$-module $X\in\X$ with $t\ge\depth X=:s$.
Hence $t-s\ge0$, and $\syz^tk=\syz^{t-s}(\syz^sk)$ belongs to $\res\syz^sk$.
Proposition \ref{5'} implies that $\syz^sk$ is in $\res(X\oplus\syz^nk)$, and we have
$$
M\in\res(M\oplus\syz^nk)=\res(\syz^tk\oplus\syz^nk)\subseteq\res(\syz^sk\oplus\syz^nk)\subseteq\res(X\oplus\syz^nk)\subseteq\X.
$$

Let us consider the case $m\ge1$.
This is indeed handled very similarly to the proof of Theorem \ref{4.6}.
Using Lemma \ref{2182108}(1) and the basis of the induction, we observe that $M_\p\in\add\X_\p$ for all $\p\in\min\NF(M)$.
Lemma \ref{2182108}(2) implies that there exist exact sequences
\begin{align}
& 0 \to K \to X \to M \to 0, \\
\label{1645}& 0 \to L \to M\oplus K\oplus R^{\oplus l} \to X \to 0
\end{align}
with $X\in\X$, $\NF(L)\subseteq\NF(M)$ and $\dim\NF(L)<m$.
Localizing these exact sequences, we obtain $\depth L_\p\ge\min_{Y\in\X}\{\depth Y_\p\}$ for all $\p\in\NF(L)$.
Now we can apply the induction hypothesis to $L$ to get $L\in\X$.
By \eqref{1645}, the module $M$ belongs to $\X$.
\end{proof}

The following result is a direct consequence of Theorem \ref{g}.

\begin{cor}\label{2192304}
Let $R$ be a Cohen-Macaulay ring with $\dim R=d<\infty$.
Let $\X$ be a resolving subcategory of $\mod R$.
The following are equivalent:
\begin{enumerate}[\rm(1)]
\item
$\X$ is dominant;
\item
For all $\p\in\Spec R$, there exists $n\ge0$ such that $\syz^n(R/\p)\in\X$;
\item
For all $\p\in\Spec R$, $\syz^d(R/\p)\in\X$.
\end{enumerate}
\end{cor}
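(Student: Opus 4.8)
The plan is to prove the cycle of implications $(3)\Rightarrow(2)\Rightarrow(1)\Rightarrow(3)$. The implication $(3)\Rightarrow(2)$ is immediate: take $n=d$. For $(2)\Rightarrow(1)$, fix $\p\in\Spec R$ and pick $n\ge0$ with $\syz^n(R/\p)\in\X$. Localizing a free resolution of $R/\p$ at $\p$ yields a free resolution of $\kappa(\p)$, so $(\syz^n(R/\p))_\p$ is an $n$-th syzygy of $\kappa(\p)$ and hence isomorphic to $\syz^n\kappa(\p)$ up to free summands; thus $\syz^n\kappa(\p)\in\add\X_\p$. As $\p$ was arbitrary, $\X$ is dominant.

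The substance of the statement is $(1)\Rightarrow(3)$, and I would deduce it from Theorem \ref{g}. Fix $\p\in\Spec R$ and set $M=\syz^d(R/\p)$. If $\q\in\Spec R$ does not contain $\p$, then $(R/\p)_\q=0$, so $M_\q$ is free; hence $\NF(M)\subseteq V(\p)$. Since $\X$ is dominant on $\Spec R$, it is in particular dominant on $\NF(M)$, so the hypothesis of Theorem \ref{g} is satisfied; it remains only to check condition (2) of that theorem, namely $\depth M_\q\ge\min_{X\in\X}\{\depth X_\q\}$ for every $\q\in\NF(M)$. Fix such a $\q$, so that $\q\supseteq\p$. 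Localizing a free resolution of $R/\p$ at $\q$ exhibits $M_\q$ as a $d$-th syzygy of $R_\q/\p R_\q$ in a (possibly non-minimal) free resolution over the Cohen-Macaulay local ring $R_\q$. Iterating the depth lemma $d$ times gives $\depth M_\q\ge\min\{\depth R_\q,d\}=\depth R_\q=\dim R_\q$, where the last two equalities use that $R_\q$ is Cohen-Macaulay and $\dim R_\q\le d$. On the other hand, $R\in\X$, and any $X\in\X$ with $X_\q\ne0$ satisfies $\depth X_\q\le\dim R_\q$, so $\min_{X\in\X}\{\depth X_\q\}\le\dim R_\q\le\depth M_\q$. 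Theorem \ref{g} therefore gives $M=\syz^d(R/\p)\in\X$.

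I do not expect a real obstacle here: the corollary is essentially bookkeeping on top of Theorem \ref{g}. The one point that requires any care is the depth estimate for $M_\q$, where one uses that localizing a free resolution produces a free resolution (not necessarily minimal) and that the depth lemma does not require minimality, together with the convention $\depth 0=\infty$ to cover the primes at which the localized $d$-th syzygy happens to vanish.
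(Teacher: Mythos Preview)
Your proof is correct and follows essentially the same route as the paper: the implications $(3)\Rightarrow(2)\Rightarrow(1)$ are declared ``obvious'' there, and $(1)\Rightarrow(3)$ is deduced from Theorem~\ref{g} via the observation that $\syz^d(R/\p)_\q$ is maximal Cohen--Macaulay over $R_\q$, exactly as you argue in more detail using the depth lemma. The only difference is that you spell out the depth estimate and the bound $\min_{X\in\X}\{\depth X_\q\}\le\dim R_\q$ (which indeed follows since $R\in\X$), whereas the paper leaves these implicit.
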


\begin{proof}
The implications (3) $\Rightarrow$ (2) $\Rightarrow$ (1) are obvious.
To show (1) $\Rightarrow$ (3), let $\q\in\Spec R$.
Evidently $\syz^d(R/\p)_\q$ is a maximal Cohen-Macaulay $R_\q$-module.
Hence we have $\depth\syz^d(R/\p)_\q\ge\dim R_\q\ge\min_{X\in\X}\{\depth X_\q\}$, and $\syz^d(R/\p)\in\X$ by Theorem \ref{g}.
\end{proof}

\section{Proofs of Theorems \ref{1.2}\ and \ref{1.3}}\label{mainproofs}

This section is devoted to proving Theorem \ref{1.2} and \ref{1.3} by using the results obtained in the previous two sections.
First of all, we define subcategories determined by local finiteness of homological dimensions.

\begin{nota}
We denote by $\h$ either projective dimension $\pd$ or Cohen-Macaulay dimension $\CMdim$.
({\em Cohen-Macaulay dimension} is defined in \cite[Definitions 3.2 and 3.2']{G}.
We do not remind the reader of its definition because it requires a lot of words which are not important here.
The only importance for us concerning this dimension is concentrated in Lemma \ref{5.2} below.)
Let $\FD_{\h}(R)$ denote the subcategory of $\mod R$ consisting of modules $M$ with $\h_{R_\p}(M_\p)<\infty$ for all $\p\in\Spec R$.
\end{nota}

Here are some basic properties of projective and Cohen-Macaulay dimension and their consequences, which will often be used without reference.

\begin{lem}\label{5.2}
The following hold for $(\h,\P)=(\pd,{\rm regular}),(\CMdim,{\rm Cohen}$-${\rm Macaulay})$.
\begin{enumerate}[\rm(1)]
\item
Suppose that $R$ is local.
\begin{enumerate}[\rm(a)]
\item
It holds for an $R$-module $M$ that $\h(M)\in\{-\infty\}\cup\N\cup\{\infty\}$, and that $\h(M)=-\infty\Leftrightarrow M=0$.
\item
If $M,N$ are $R$-modules with $M\cong N$, then $\h(M)=\h(N)$.
\item
For an $R$-module $M$ one has $\CMdim M\le\pd M$.
Equality holds if $\pd M<\infty$.
\item
If $M$ is an $R$-module and $N$ is a direct summand of $M$, then $\h(N)\le\h(M)$.
\item
If $M$ is an $R$-module with $\h(M)<\infty$, then $\h(M)=\depth R-\depth M$.
\item
For a nonzero $R$-module $M$, one has $\h(\syz^nM)=\sup\{\h(M)-n,0\}$.
\item
If $R$ satisfies $\P$, then $\h(M)<\infty$ for every $R$-module $M$.
\end{enumerate}
\item
For $M\in\mod R$ and $\p,\q\in\Spec R$ with $\p\subseteq\q$, one has $\h_{R_\p}(M_\p)\le\h_{R_\q}(M_\q)$.
\item
If $R$ satisfies $\P$, then $\FD_{\h}(R)=\mod R$.
\item
One has $\FD_{\pd}(R)=\FPD(R)$.
\item
For every $R$-module $M\in\FD_{\h}(R)$, the inequality $\h(M_\p)\le\grade\p$ holds.
\end{enumerate}
\end{lem}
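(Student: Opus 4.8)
The plan is to handle the two cases $\h=\pd$ and $\h=\CMdim$ in parallel, invoking classical homological algebra for the first and Gerko's paper \cite{G} for the second, and then to derive (2)--(5) formally from (1). For $\h=\pd$ over a local ring, assertions (1)(a), (b), (d) are immediate from the definition via minimal free resolutions, (1)(f) is the standard behaviour of projective dimension under syzygies, (1)(e) is the Auslander--Buchsbaum formula, and (1)(g) is the Auslander--Buchsbaum--Serre characterization of regular local rings; all are in any standard text. For $\h=\CMdim$ the corresponding statements (1)(a), (b), (d), (e), (f), (g) are collected in \cite[\S3]{G}: in particular (1)(e) is the Auslander--Buchsbaum-type equality for CM-dimension, and (1)(g) records that every finitely generated module over a Cohen--Macaulay local ring has finite CM-dimension. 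Item (1)(c), which compares the two invariants, is again \cite[\S3]{G}.

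Granting (1), I would argue (2)--(4) as follows. For (2), localization never increases homological dimension: for $\pd$ one localizes a finite free resolution over $R$ to a finite free resolution over $R_\q$ and then over $R_\p$, while for $\CMdim$ the analogous non-increase under localization is part of Gerko's basic list of properties in \cite{G}. For (3), if $R$ satisfies $\P$ then so does each localization $R_\p$ (a localization of a regular local ring is regular, and of a Cohen--Macaulay ring is Cohen--Macaulay), so (1)(g) gives $\h_{R_\p}(M_\p)<\infty$ for every $\p$, which is precisely $M\in\FD_{\h}(R)$. For (4), $\FD_{\pd}(R)=\LFPD(R)$ holds verbatim from the definitions of the two subcategories, whereas $\LFPD(R)=\FPD(R)$ is the one genuinely non-elementary ingredient, recorded in Remark \ref{pd} and due to \cite[4.5]{BM}.

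It remains to prove (5). First fix $M\in\FD_{\h}(R)$ and $\p\in\Spec R$. For each $\q\in V(\p)$ we have $\h_{R_\q}(M_\q)<\infty$, so applying (1)(e) over $R_\q$ gives $\h_{R_\q}(M_\q)=\depth R_\q-\depth M_\q\le\depth R_\q$, and then (2) gives $\h_{R_\p}(M_\p)\le\h_{R_\q}(M_\q)\le\depth R_\q$. Taking the infimum over $\q\in V(\p)$ and using the standard identity $\grade\p=\inf_{\q\in V(\p)}\depth R_\q$ yields $\h_{R_\p}(M_\p)\le\grade\p$. The only steps calling for real care are pinning down the CM-dimension analogues of the Auslander--Buchsbaum formula and of the localization inequality inside \cite{G}, and quoting \cite{BM} correctly for $\LFPD(R)=\FPD(R)$; everything else is bookkeeping.
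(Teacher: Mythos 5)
Your proposal matches the paper's argument in structure and substance: the paper also dispatches items (1)--(4) as well-known for $\pd$ (citing Remark \ref{pd} for $\LFPD(R)=\FPD(R)$) and as collected in \cite[\S 3]{G} for $\CMdim$, then proves (5) exactly as you do, via $\grade\p=\inf\{\depth R_\q\mid\q\in V(\p)\}$ combined with item (1)(e) and the localization inequality (2). The only cosmetic difference is that the paper picks a single $\q\in V(\p)$ achieving the infimum rather than bounding uniformly over all $\q$ and then passing to the infimum; the content is identical.
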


\begin{proof}
The statements (1)--(3) are well-known for $\pd$, and those for $\CMdim$ are stated in \cite[\S3]{G}.
The statement (4) follows from \cite[4.5]{BM}.
Let us show the statement (5).
There is an equality $\grade\p=\inf\{\,\depth R_\q\mid\q\in V(\p)\,\}$ by \cite[Proposition 1.2.10]{BH}, so we have $\grade\p=\depth R_\q$ for some $\q\in V(\p)$.
Since $\h(M_\q)$ is finite, it holds that $\h(M_\p)\le\h(M_\q)=\depth R_\q-\depth M_\q\le\depth R_\q=\grade\p$.
\end{proof}

The following lemma will be necessary in both of the proofs of Theorems \ref{1.2} and \ref{1.3}.

\begin{lem}\label{h}
Let $f$ be a grade consistent function on $\Spec R$.
(See Definition \ref{gcdef} for the definition of a grade consistent function.)
For each $\p\in\Spec R$ there is an $R$-module $E$ satisfying $\pd E<\infty$, $\pd E_\q\le f(\q)$ for all $\q\in\Spec R$ and $\pd E_\p=f(\p)$.
\end{lem}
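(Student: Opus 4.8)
The goal is, given a grade consistent function $f$ and a fixed prime $\p$, to build a single module $E$ of finite projective dimension whose local projective dimensions are everywhere bounded by $f$ and attain $f(\p)$ at $\p$. The plan is to produce a module with a very small nonfree locus --- ideally a single point $\p$ --- so that the only place where $\pd E_\q$ can be positive is along $V(\p)$, and then control the value there. First I would set $n=f(\p)$; by grade consistency $n\le\grade\p$, so by the standard equality $\grade\p=\inf\{\depth R_\q\mid\q\in V(\p)\}$ there is some maximal element $\m\in V(\p)$ (or simply a prime $\m\supseteq\p$) with $\depth R_\m\ge n$. Working first in the local ring $R_\m$, I would take $E'$ to be the $n$-th syzygy of the residue field of $R_\m$, or better (to make the nonfree locus exactly one point) a module supplied by Theorem \ref{1.1}: a module in $\PD_0^n(R_\m)$ of projective dimension exactly $n$. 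Such a module has finite projective dimension, is locally free on the punctured spectrum of $R_\m$, and has $\pd = n$.

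The next step is to spread this local module out to a global module over $R$ with a controlled nonfree locus. For this I would use the standard technique: choose a finitely presented $R$-module $E$ with $E_\m\cong E'$ (or rather, built from a truncated free resolution so that $\pd E<\infty$ globally), arrange via the usual ``clearing denominators'' / Nagata-type argument that $\NF(E)\subseteq V(\p)$; concretely one takes an explicit presentation of $E'$ by matrices over $R_\m$, clears denominators to get a complex of free $R$-modules, and localizes away the primes outside $V(\p)$ at which the complex fails to be a resolution. Then $\NF(E)$ is a closed subset of $V(\p)$. One can further cut it down to exactly $V(\p)$, but that is not needed --- all that matters is that for $\q\notin V(\p)$ we get $\pd E_\q=0\le f(\q)$, so the bound $\pd E_\q\le f(\q)$ is automatic off $V(\p)$.

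It remains to check the inequality $\pd E_\q\le f(\q)$ for $\q\in V(\p)$ and the equality at $\p$. For $\q\in V(\p)$, monotonicity of projective dimension under localization (Lemma \ref{5.2}(2)) gives $\pd E_\q\le\pd E_\m$; and if $E$ was built from $E'\in\PD_0^n(R_\m)$ then $\pd E_\m= n=f(\p)\le f(\q)$ by grade consistency of $f$ (since $\p\subseteq\q$). This simultaneously forces $\pd E_\p\le n$; conversely $\pd E_\p\ge n$ should follow because $E_\p$ retains the relevant $\Ext$: either arrange the construction so that $E_\p$ itself is (stably) the module from Theorem \ref{1.1} over $R_\p$ of projective dimension $n$, or observe that $\pd E_\p = \pd E_\m - \dim(R_\m/\p R_\m)$-type reasoning is not available in general, so instead I would directly build $E$ so that $E_\p\cong\tr\syz^{n-1}k(\p)$ up to free summands, whose projective dimension over $R_\p$ is exactly $n$ by Lemma \ref{4}. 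The main obstacle is precisely this last point: globalizing while keeping \emph{both} the upper bound everywhere \emph{and} the exact value $n$ at $\p$. The cleanest route, which I expect to be what the authors do, is to work entirely over $R_\p$ --- pick $\q=\p$, find $\m\supseteq\p$ with $\depth R_\m\ge n$ only to justify $n\le\depth R_\p$ is \emph{not} guaranteed (indeed $\depth R_\p$ may be smaller than $n$!), so in fact one must use a prime $\m$ with large depth and let the nonfree locus of $E$ pass through $\p$ with $E_\p$ of finite but possibly smaller projective dimension at $\p$ --- wait, then the equality fails. Resolving this tension is the heart of the argument: one genuinely needs $\grade\p\ge n$, pick $\m\in V(\p)$ with $\depth R_\m=\grade\p\ge n$, form over $R_\m$ a module $E'$ of projective dimension $n$ locally free on the punctured spectrum, globalize to $E$ with $\NF(E)$ a closed subset of $V(\p)\cap V(\m)=\{\m\}$-neighborhood, and then $\pd E_\p$ equals $0$ if $\p\ne\m$. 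So the correct statement is to choose $\m$ to be a prime \emph{containing} $\p$ of maximal depth and note that then $\pd E_\q\le n\le f(\q)$ for all $\q\in V(\m)$ while $\pd E_\q=0$ elsewhere; to also get $\pd E_\p=f(\p)$ one instead takes $\m$ so that additionally $\p=\m$ is impossible unless $\grade\p=\depth R_\p$. Thus the honest plan is: if $\depth R_\p\ge f(\p)$, build $E$ local at $\p$ directly; in general use that $f(\p)\le\grade\p$ and a limit/gluing of such local constructions over the finitely many relevant primes, assembling $E$ as a direct sum, and verify the two displayed conditions prime by prime using Lemma \ref{5.2}. I expect the delicate bookkeeping of which prime to localize at --- reconciling $f(\p)\le\grade\p$ with the need for a finite-projective-dimension witness of projective dimension exactly $f(\p)$ at $\p$ --- to be the only real difficulty; everything else is the standard globalization of Theorem \ref{1.1}.
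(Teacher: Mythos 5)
Your proposal circles the right difficulty but never resolves it, and it contains a false claim that derails the whole plan. You assert that ``$n\le\depth R_\p$ is \emph{not} guaranteed (indeed $\depth R_\p$ may be smaller than $n$!).'' This is wrong: as the paper uses in the proof of Lemma~\ref{5.2}(5), $\grade\p=\inf\{\depth R_\q\mid\q\in V(\p)\}$, and since $\p\in V(\p)$ this gives $\grade\p\le\depth R_\p$; combined with grade consistency, $f(\p)\le\grade\p\le\depth R_\p$ always. So the case split you contemplate at the end (``if $\depth R_\p\ge f(\p)$\dots\ in general\dots'') is vacuous, and the tension you describe as ``the heart of the argument'' is an artifact of this misconception.

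More substantively, the globalization step is the real gap. You want to build $E'$ over a local ring $R_\m$ and then ``spread out'' by clearing denominators, but you never explain how to guarantee simultaneously that the resulting $R$-module $E$ has finite projective dimension over $R$, that $\NF(E)\subseteq V(\p)$, and that $\pd E_\p$ is exactly $f(\p)$. These are not automatic consequences of clearing denominators. The paper avoids this entirely by never leaving $R$: set $n=\grade\p$ (not $f(\p)$), pick an $R$-regular sequence $\xx=x_1,\dots,x_n$ inside $\p$, so $R/(\xx)$ is a global module of finite projective dimension with $\p\in\NF(R/(\xx))$. Then Proposition~\ref{c} — the tool you do not invoke, and the one that does the globalization for free — produces $X\in\res(R/(\xx))$ with $\NF(X)=V(\p)$ and $\depth X_\q=\depth R_\q-n$ for all $\q\in V(\p)$; by Auslander–Buchsbaum this forces $\pd X_\q=n$ on all of $V(\p)$. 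Finally $E:=\syz^{n-f(\p)}X$ has $\pd E_\q=f(\p)\le f(\q)$ for $\q\in V(\p)$ (the inequality by grade consistency since $\p\subseteq\q$), and $\pd E_\q=0$ off $V(\p)$. Both the everywhere-bound and the exact value at $\p$ come out of one clean construction, precisely because one starts from a regular sequence of the full length $\grade\p$ and then drops the projective dimension uniformly by taking syzygies, rather than trying to hit $f(\p)$ directly over a local ring and patch.
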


\begin{proof}
Put $n=\grade\p$.
If $n=0$, then $f(\p)=0$, and we can take $E:=R$.
Let $n\ge1$.
Choose an $R$-sequence $\xx=x_1,\dots,x_n$ in $\p$.
It is easy to observe that $\p$ belongs to $\NF(R/(\xx))$.
Proposition \ref{c} implies that there exists an $R$-module $X\in\res(R/(\xx))$ with $\NF(X)=V(\p)$ and $\depth X_\q=\inf\{\depth R_\q/\xx R_\q,\depth R_\q\}=\depth R_\q-n$ for all $\q\in V(\p)$.
Note that $n-f(\p)\ge0$.
Put $E:=\syz^{n-f(\p)}X$.
As the $R$-module $R/(\xx)$ has finite projective dimension, so does $X$, and so does $E$.
If $\q$ is not in $V(\p)$, then $\q\notin\NF(X)$.
Hence $X_\q$ is $R_\q$-free, and so is $E_\q$.
Thus we may assume $\q\in V(\p)$.
Then we have $\pd E_\q=\sup\{\pd X_\q-n+f(\p),0\}=\sup\{\depth R_\q-\depth X_\q-n+f(\p),0\}=\sup\{f(\p),0\}=f(\p)\le f(\q)$.
In particular, it holds that $\pd E_\p=f(\p)$.
\end{proof}

Now we are on the stage to achieve the main purpose of this section.
For the convenience of the reader, we restate the statements of Theorems \ref{1.2} and \ref{1.3}.

\begin{thma}
Let $R$ be a commutative noetherian ring.
There is a 1-1 correspondence
$$
\begin{CD}
\left\{
\begin{matrix}
\text{Resolving subcategories of $\mod R$}\\
\text{contained in $\FPD(R)$}
\end{matrix}
\right\}
\begin{matrix}
@>{\phi}>>\\
@<<{\psi}<
\end{matrix}
\left\{
\begin{matrix}
\text{Grade consistent functions}\\
\text{on $\Spec R$}
\end{matrix}
\right\}.
\end{CD}
$$
Here $\phi,\psi$ are defined by $\phi(\X)(\p)=\max_{X\in\X}\{\pd X_\p\}$ and $\psi(f)=\{\,M\in\mod R\mid\text{$\pd M_\p\le f(\p)$ for all $\p\in\Spec R$}\,\}$.
\end{thma}

\begin{thmb}
Let $R$ be a Cohen-Macaulay ring.
Then one has a 1-1 correspondence
$$
\begin{CD}
\left\{
\begin{matrix}
\text{Dominant resolving subcategories}\\
\text{of $\mod R$}\\
\end{matrix}
\right\}
\begin{matrix}
@>{\phi}>>\\
@<<{\psi}<
\end{matrix}
\left\{
\begin{matrix}
\text{Grade consistent functions}\\
\text{on $\Spec R$}
\end{matrix}
\right\},
\end{CD}
$$
where $\phi,\psi$ are defined by $\phi(\X)(\p)=\height\p-\min_{X\in\X}\{\depth X_\p\}$ and $\psi(f)=\{\,M\in\mod R\mid\text{$\depth M_\p\ge\height\p-f(\p)$ for all $\p\in\Spec R$}\,\}$.
\end{thmb}

\begin{proof}[{\bf Proof of Theorem \ref{1.2}}]
It is easy to verify that $\phi,\psi$ are well-defined maps (use Lemma \ref{5.2}).
Let $\X$ be a resolving subcategory contained in $\FPD(R)$.
Then
$$
\psi\phi(\X)=\{\,M\in\mod R\mid\text{$\pd M_\p\le\max_{X\in\X}\{\pd X_\p\}$ for all $\p\in\Spec R$}\,\}\supseteq\X.
$$
It follows from Theorem \ref{4.6} that $\psi\phi(\X)=\X$.
Let $f$ be a grade consistent function on $\Spec R$.
Fix a prime ideal $\p$.
We have
$$
\phi\psi(f)(\p)=\max\{\,\pd X_\p\mid\text{$X\in\mod R$ with $\pd X_\q\le f(\q)$ for all $\q\in\Spec R$}\,\}\le f(\p).
$$
It follows from Lemma \ref{h} that $\phi\psi(f)(\p)=f(\p)$.
Therefore $\phi\psi(f)=f$.
\end{proof}

\begin{proof}[{\bf Proof of Theorem \ref{1.3}}]
We see from \cite[Theorem 1.1]{AIL} that for every $R$-module $M$ the {\em large restricted flat dimension} 
$$
\Rfd_RM:=\sup_{\p\in\Spec R}\{\depth R_\p-\depth M_\p\}
$$
of $M$ is finite.
Fix $\p\in\Spec R$ and set $n:=\Rfd_R(R/\p)<\infty$.
It follows from \cite[Proposition (2.3) and Theorem (2.8)]{CFF} that $\CMdim_{R_\q}(R_\q/\p R_\q)-n=\Rfd_{R_\q}(R_\q/\p R_\q)-n\le0$, and hence $\CMdim_{R_\q}\syz^n(R_\q/\p R_\q)=\sup\{\CMdim_{R_\q}(R_\q/\p R_\q)-n,0\}=0\le f(\q)$ for each $\q\in\Spec R$.
Thus $\syz^n(R/\p)\in\psi(f)$, which implies that $\psi(f)$ is dominant.
Now we easily check that $\phi,\psi$ are well-defined.
Let $\X$ be a dominant resolving subcategory of $\mod R$.
Then
$$
\psi\phi(\X)=\{\,M\in\mod R\mid\text{$\depth M_\p\ge\min_{X\in\X}\{\depth X_\p\}$ for all $\p\in\Spec R$}\,\}\supseteq\X.
$$
By virtue of Theorem \ref{g}, the equality $\psi\phi(\X)=\X$ holds.
Let $f$ be a grade consistent function on $\Spec R$, and let $\p\in\Spec R$.
Then
\begin{align*}
\phi\psi(f)(\p)
&=\max\{\,\CMdim X_\p\mid\text{$X\in\mod R$ with $\CMdim X_\q\le f(\q)$ for all $\q\in\Spec R$}\,\}\\
&\le f(\p).
\end{align*}
Taking $E$ as in Lemma \ref{h}, we deduce $\phi\psi(f)(\p)=f(\p)$.
Thus $\phi\psi(f)=f$.
\end{proof}

Notice that all resolving subcategories over a regular ring $R$ is dominant since a high syzygy of an $R$-module is projective.
Thus as a common consequence of Theorems \ref{1.2} and \ref{1.3} we get a complete classification of the resolving subcategories over a regular ring:

\begin{cor}
Let $R$ be a regular ring.
Then there exist mutually inverse bijections
$$
\begin{CD}
\left\{
\begin{matrix}
\text{Resolving subcategories}\\
\text{of $\mod R$}
\end{matrix}
\right\}
\begin{matrix}
@>{\phi}>>\\
@<<{\psi}<
\end{matrix}
\left\{
\begin{matrix}
\text{Grade consistent functions}\\
\text{on $\Spec R$}
\end{matrix}
\right\}.
\end{CD}
$$
Here $\phi,\psi$ are defined by $\phi(\X)=[\,\p\mapsto\max_{X\in\X}\{\pd X_\p\}\,]$ and $\psi(f)=\{\,M\in\mod R\mid\text{$\pd M_\p\le f(\p)$ for all $\p\in\Spec R$}\,\}$.
\end{cor}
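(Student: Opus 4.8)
The plan is to deduce the corollary directly from Theorem \ref{1.2}; no new argument is needed beyond checking that the hypothesis ``contained in $\FPD(R)$'' is vacuous when $R$ is regular, and that the maps named $\phi,\psi$ here are literally the maps $\phi,\psi$ of Theorem \ref{1.2}.

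First I would record that if $R$ is regular then $R_\p$ is a regular local ring for every $\p\in\Spec R$, so every $R$-module has finite projective dimension at every prime; in the notation of Section \ref{mainproofs} this says $\mod R=\FD_{\pd}(R)$ (Lemma \ref{5.2}(3), taking $\h=\pd$). By Lemma \ref{5.2}(4) one has $\FD_{\pd}(R)=\LFPD(R)=\FPD(R)$, and therefore $\FPD(R)=\mod R$. The one point to be a little careful about is that over a regular ring of infinite Krull dimension an individual module need not have finite projective dimension as a number; this is precisely why one passes through the equality $\LFPD(R)=\FPD(R)$ of Lemma \ref{5.2}(4) (equivalently Remark \ref{pd} and \cite{BM}) rather than arguing by hand.

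Given this, the set of resolving subcategories of $\mod R$ coincides with the set of resolving subcategories of $\mod R$ contained in $\FPD(R)$, and Theorem \ref{1.2} furnishes mutually inverse bijections between the latter set and the set of grade consistent functions on $\Spec R$, described by exactly the formulas $\phi(\X)(\p)=\max_{X\in\X}\{\pd X_\p\}$ and $\psi(f)=\{\,M\in\mod R\mid\pd M_\p\le f(\p)\text{ for all }\p\,\}$ appearing in the statement. This finishes the proof.

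I expect no genuine obstacle here, since all the work is already done in Theorem \ref{1.2}; the only thing worth double-checking is compatibility with Theorem \ref{1.3}. A regular ring is Cohen--Macaulay, and every resolving subcategory $\X$ of $\mod R$ is dominant, because for each $\p$ the syzygy $\syz^n_{R_\p}\kappa(\p)$ with $n=\pd_{R_\p}\kappa(\p)$ is $R_\p$-free and hence lies in $\add\X_\p$ (as $R\in\X$). Thus Theorem \ref{1.3} also applies and gives the same bijection: for a module of finite projective dimension the Auslander--Buchsbaum formula yields $\pd X_\p=\height\p-\depth X_\p$, so $\max_{X\in\X}\{\pd X_\p\}=\height\p-\min_{X\in\X}\{\depth X_\p\}$, matching the two descriptions of $\phi$.
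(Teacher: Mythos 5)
Your proof is correct and takes essentially the same route the paper intends: the corollary is a direct specialization of Theorem \ref{1.2} once one knows $\mod R=\FD_{\pd}(R)=\LFPD(R)=\FPD(R)$ for $R$ regular (Lemma \ref{5.2}(3),(4), relying on Bass--Murthy for the last equality), and your cross-check that Theorem \ref{1.3} yields the same bijection via Auslander--Buchsbaum matches the paper's remark that the statement is a ``common consequence'' of both theorems. One phrasing nit: a finitely generated module over a regular ring \emph{does} always have finite projective dimension (that is exactly what $\LFPD(R)=\FPD(R)$ says), so ``need not have finite projective dimension'' should read ``is not \emph{a priori} obviously of finite projective dimension''; the logic you run is nonetheless sound.
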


Recall that a local ring $R$ has (at most) an {\em isolated singularity} if $R_\p$ is a regular local ring for each nonmaximal prime ideal $\p$ of $R$.
As an application of Theorem \ref{1.3}, we have the following.

\begin{cor}
Let $(R,\m,k)$ be a $d$-dimensional Cohen-Macaulay local ring with an isolated singularity.
Then one has the following one-to-one correspondences.
$$
\begin{CD}
\left\{
\begin{matrix}
\text{Resolving subcategories of $\mod R$}\\
\text{containing $\syz^dk$}
\end{matrix}
\right\}
\begin{matrix}
@>{\phi}>>\\
@<<{\psi}<
\end{matrix}
\left\{
\begin{matrix}
\text{Grade consistent functions}\\
\text{on }\Spec R
\end{matrix}
\right\},
\end{CD}
$$
where $\phi,\psi$ are defined by $\phi(\X)=[\,\p\mapsto\height\p-\min_{X\in\X}\{\depth X_\p\}\,]$ and $\psi(f)=\{\,M\in\mod R\mid\text{$\depth M_\p\ge\height\p-f(\p)$ for all $\p\in\Spec R$}\,\}$.
\end{cor}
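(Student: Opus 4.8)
The plan is to deduce this from Theorem \ref{1.3}. The key observation to establish is that, for $R$ a $d$-dimensional Cohen-Macaulay local ring with an isolated singularity, a resolving subcategory $\X$ of $\mod R$ is \emph{dominant} if and only if $\syz^dk\in\X$. Granting this, the corollary is immediate: Theorem \ref{1.3} gives a one-to-one correspondence between dominant resolving subcategories of $\mod R$ and grade consistent functions on $\Spec R$ via the very maps $\phi,\psi$ in the statement, and (as observed in the proof of Theorem \ref{1.3}) $\psi(f)$ is always dominant, hence contains $\syz^dk$, so $\psi$ indeed lands in the asserted source.

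To prove that $\syz^dk\in\X$ forces $\X$ to be dominant, I would fix $\p\in\Spec R$ and check the dominance condition at $\p$ by cases. If $\p=\m$, then since $R$ is local we have $\add\X_\m=\X$, so $\syz^d\kappa(\m)=\syz^dk\in\add\X_\m$ and we may take $n=d$. If $\p\ne\m$, then $R_\p$ is a regular local ring by hypothesis, so $\pd_{R_\p}\kappa(\p)<\infty$ and $\syz^n_{R_\p}\kappa(\p)$ is $R_\p$-free for all large $n$; since $R\in\X$ gives $R_\p\in\X_\p$ and $\add\X_\p$ is a resolving subcategory of $\mod R_\p$ by Lemma \ref{2182108}(1), it contains every free $R_\p$-module, whence $\syz^n\kappa(\p)\in\add\X_\p$. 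Thus $\X$ is dominant.

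For the converse, if $\X$ is dominant then Corollary \ref{2192304} applies, since $R$ is a Cohen-Macaulay ring with $\dim R=d<\infty$, and yields $\syz^d(R/\p)\in\X$ for every $\p\in\Spec R$; taking $\p=\m$ gives $\syz^dk\in\X$. This completes the proof of the key observation, and hence of the corollary. I do not expect a genuine obstacle here; the only step requiring a little care is the localization argument at a nonmaximal prime $\p$, where one must combine the fact that $\add\X_\p$ is resolving (so closed under summands and containing all frees) with the regularity of $R_\p$ supplied by the isolated-singularity hypothesis. Everything else is a direct appeal to Theorem \ref{1.3} and Corollary \ref{2192304}.
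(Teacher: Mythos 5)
Your proof is correct and takes essentially the same route as the paper: reduce to showing that for an isolated Cohen--Macaulay singularity, a resolving subcategory $\X$ is dominant if and only if $\syz^dk\in\X$, using regularity of $R_\p$ at nonmaximal primes for one direction and Corollary \ref{2192304} for the other, then invoke Theorem \ref{1.3}. The paper simply declares the ``dominant $\Rightarrow$ $\syz^dk\in\X$'' direction obvious, whereas you justify it via Corollary \ref{2192304}, which is a cleaner and more explicit version of the same step.
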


\begin{proof}
According to Theorem \ref{1.3}, it is enough to check that a resolving subcategory $\X$ of $\mod R$ is dominant if and only if $\syz^dk\in\X$.
The `only if' part is obvious.
As to the `if' part, take $\p\in\Spec R$.
Let us show $\syz^d\kappa(\p)\in\add\X_\p$.
It is clear if $\p=\m$, so assume $\p\ne\m$.
Then $R_\p$ is regular, and $\syz^d\kappa(\p)$ is a free $R_\p$-module.
Hence it is in $\add\X_\p$.
\end{proof}

\section{Some examples}\label{exa}

In this section, we state some examples to cultivate a better understanding and show the importance of the results we have obtained in the preceding sections.
First of all, let us give examples of a grade consistent function.

\begin{ex}\label{12011405}
Define an $\N$-valued function $f$ on $\Spec R$ by for each $\p\in\Spec R$:
\begin{enumerate}[(1)]
\item
$f(\p)=0$,
\item
$f(\p)=\grade\p$, or
\item
$f(\p)=\begin{cases}
0 & \text{if }\p=0,\\
1 & \text{if }\p\ne0,
\end{cases}$ assuming that $R$ is a domain.
\end{enumerate}
Then $f$ is grade consistent.
The map $\psi$ in Theorem \ref{1.2} sends these grade consistent functions to the resolving subcategories
$$
\add R,\quad\PD(R),\quad\{ M\in\mod R\mid \pd M\le1\}
$$
contained in $\PD(R)$, and the map $\psi$ in Theorem \ref{1.3} sends them to the dominant resolving subcategories
$$
\CM(R),\quad\mod R,\quad\{M\in\mod R\mid\depth M\ge\dim R-1\},
$$
respectively.
\end{ex}

Next, given two modules $M$ and $N$, let us consider when $M$ belongs to the resolving closure of $N$.

\begin{ex}\label{12011208}
Let $R=k[x,y]$ be a polynomial ring over a field $k$.
Then one has:
\begin{enumerate}[(1)]
\item
$(x^2,y)\in\res(R/(x,y))$.
\item
$R/(xy)\notin\res(R/(x))$.
\item
$R/(x)\in\res(R/(xy))$.
\end{enumerate}
\end{ex}

We here give direct proofs of these three statements by actually constructing direct summands, extensions and syzygies.

\begin{proof}[Proof of Example \ref{12011208}]
(1) There is a natural exact sequence
$$
0 \to (x,y)/(x^2,y)\to R/(x^2,y)\to R/(x,y)\to0
$$
of $R$-modules.
It is easily seen that $(x,y)/(x^2,y)$ is isomorphic to $R/(x,y)$.
Hence $R/(x^2,y)$ is in $\res(R/(x,y))$, and so is its first syzygy $(x^2,y)$.

(2) Suppose that $R/(xy)$ belongs to $\res_R(R/(x))$.
Then localization at the prime ideal $(y)$ of $R$ shows that $(R/(xy))_{(y)}=R_{(y)}/yR_{(y)}$ belongs to $\res_{R_{(y)}}((R/(x))_{(y)})=\add_{R_{(y)}}(R_{(y)})$ (cf. \cite[Proposition 1.11]{stcm}).
Hence $R_{(y)}/yR_{(y)}$ is free as an $R_{(y)}$-module, which is a contradiction.

(3) We have an exact sequence
\begin{equation}\label{12011206}
0 \to R/(xy) \xrightarrow{f} R/(x)\oplus R/(xy^2) \xrightarrow{g} R/(xy) \to 0
\end{equation}
of $R$-modules, where $f$ and $g$ are defined by $f(\overline a)=\binom{\overline a}{\overline{ay}}$ and $g(\binom{\overline b}{\overline c})=\overline{by-c}$ for $a,b,c\in R$.
We observe from this exact sequence that $R/(x)$ belongs to $\res(R/(xy))$.
\end{proof}

Thus we have got proofs of the three statements in Example \ref{12011208}, but such statements are difficult to show in general.
Especially, it is pretty hard in general to find such an exact sequence as \eqref{12011206}.

Our Theorems \ref{4.6} and \ref{g} are useful to verify such containment.
To see this, we first transform them into a more reasonable form by using \cite[Propositions 1.11 and 1.12]{stcm}:

\begin{prop}\label{12011550}
Let $M,N$ be $R$-modules.
\begin{enumerate}[\rm(1)]
\item
Suppose that $M$ and $N$ are in $\FPD(R)$.
Then $M\in\res N$ if and only if $\pd M_\p\le\pd N_\p$ for all $\p\in\NF(M)$.
\item
Suppose that $R$ is a $d$-dimensional Cohen-Macaulay local ring with residue field $k$ having an isolated singularity.
Then $M\in\res(N\oplus\syz^dk)$ if and only if $\depth M_\p\ge\depth N_\p$ for all $\p\in\NF(M)$.
\end{enumerate}
\end{prop}

Applying Proposition \ref{12011550}(1), we can show the three statements in Example \ref{12011208} easily as follows.
We should remark that it is unnecessary to construct direct summands, extensions and syzygies; only necessary is to calculate nonfree loci and projective dimensions.

\begin{proof}[Another proof of Example \ref{12011208}]
Use Proposition \ref{12011550}(1) and the following observations:

(1) One has $\NF(x^2,y)=\{(x,y)\}$ and $\pd(x^2,y)_{(x,y)}=1\le2=\pd(R/(x,y))_{(x,y)}$.

(2) One has $(y)\in\NF(R/(xy))$ and $\pd(R/(xy))_{(y)}=1>-\infty=\pd(R/(x))_{(y)}$.

(3) If $\p\in\NF(R/(x))$, then $x\in\p$, and hence $\pd(R/(x))_\p=1=\pd(R/(xy))_\p$.
\end{proof}

The following is an example of an application of Proposition \ref{12011550}(2).

\begin{ex}\label{12011402}
Let $R=k[[x,y]]/(xy)$.
Then $R/(x)$ belongs to $\res(R/(x-y)\oplus(x,y))$.
\end{ex}

\begin{proof}
As $\NF(R/(x))=\{(x,y)\}$, it is enough to check that $\depth R/(x)\ge\depth R/(x-y)$ by Proposition \ref{12011550}(2).
This is trivial because $\depth R/(x-y)=0$.
\end{proof}

In fact, the assertion of Example \ref{12011402} can be generalized to the following form by applying our classification Theorems \ref{1.2} and \ref{1.3}.

\begin{prop}
Let $R$ be a Cohen-Macaulay local ring of dimension one.
Then:
\begin{enumerate}[\rm(1)]
\item
The resolving subcategories of $\mod R$ contained in $\PD(R)$ are $\add R$ and $\PD(R)$.
\item
The dominant resolving subcategories of $\mod R$ are $\CM(R)$ and $\mod R$.
\end{enumerate}
\end{prop}

\begin{proof}
In view of Theorems \ref{1.2} and \ref{1.3} combined with Example \ref{12011405}, we have only to show that the grade consistent functions on $\Spec R$ are $\zeta$ and $\gamma$, which are defined by $\zeta(\p)=0$ and $\gamma(\p)=\grade\p$ for every $\p\in\Spec R$.

Let $f$ be any grade consistent function on $\Spec R$.
Since $R$ is a $1$-dimensional local ring, the spectrum $\Spec R$ consists of the minimal prime ideals $\p_1,\dots,\p_n$ and the maximal ideal $\m$.
The fact that $0\le f(\p)\le\grade\p$ for all $\p\in\Spec R$ shows that $f(\p_i)=0$ for each $1\le i\le n$ and that $f(\m)$ is either $0$ or $1$ (as $R$ is Cohen-Macaulay, $f(\m)$ can be $1$).
The function $f$ coincides with $\zeta$ if $f(\m)=0$ and with $\gamma$ if $f(\m)=1$.
\end{proof}

\section{Classification for locally hypersurface rings}\label{reghyper}

In this section, we show that over a locally complete intersection ring, a resolving subcategory is determined by its ``maximal Cohen-Macaulay" and ``finite projective dimension" parts. As a consequence, a proof of Theorem \ref{hyper} can be given.
We begin with showing two lemmas.

\begin{lem}\label{371445}
Let $R$ be a commutative noetherian ring.
\begin{enumerate}[\rm(1)]
\item
Let $0 \to L\oplus L' \xrightarrow{(f,g)} M \to N \to 0$ be an exact sequence of $R$-modules.
If $\Ext_R^1(N,L)=0$, then $f$ is a split monomorphism.
\item
Let $0 \to L \to M \xrightarrow{\binom{f}{g}} N\oplus N' \to 0$ be an exact sequence of $R$-modules.
If $\Ext_R^1(N,L)=0$, then $f$ is a split epimorphism.
\end{enumerate}
\end{lem}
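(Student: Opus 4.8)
The two assertions are structurally dual, so the plan is to prove (1) in detail and then run the mirror-image argument for (2). For (1): since $(f,g)$ is a monomorphism, its restriction $g\colon L'\to M$ is injective, and I would pass to the quotient $\overline{M}:=M/g(L')$, with canonical surjection $\pi\colon M\to\overline{M}$. The first thing to check is that $\pi f\colon L\to\overline{M}$ is a monomorphism: if $f(l)=g(l')$ for some $l'$, then $(f,g)(l,-l')=0$, forcing $l=0$. Next, since $g(L')$ is contained in the kernel $f(L)+g(L')$ of the map $M\to N$, the latter descends to $\overline{M}$, and one identifies its kernel with $(f(L)+g(L'))/g(L')=(\pi f)(L)$; hence
$$
0\to L\xrightarrow{\ \pi f\ }\overline{M}\to N\to 0
$$
is a short exact sequence. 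The hypothesis $\Ext^1_R(N,L)=0$ then forces this sequence to split, so there is a retraction $\rho\colon\overline{M}\to L$ with $\rho\,\pi f=\mathrm{id}_L$, and the composite $\rho\pi\colon M\to L$ is the desired retraction of $f$.

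For (2): dually, I would introduce $M':=\ker(g\colon M\to N')$, which is precisely the preimage of $N\oplus 0$ under $\binom{f}{g}$, and which therefore contains $L=\ker\binom{f}{g}$. Using that $\binom{f}{g}$ is an epimorphism, one checks that $f|_{M'}\colon M'\to N$ is surjective (a preimage of $(n,0)$ lies in $M'$) and has kernel $\{m\in M : f(m)=g(m)=0\}=L$, so that
$$
0\to L\to M'\xrightarrow{\ f|_{M'}\ }N\to 0
$$
is exact. Since $\Ext^1_R(N,L)=0$, this splits, producing a section $s\colon N\to M'\hookrightarrow M$ with $fs=\mathrm{id}_N$; thus $f$ is a split epimorphism.

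I do not expect any genuine obstacle: the entire content is the standard splitting criterion attached to $\Ext^1_R(N,L)=0$, and the only point requiring care is verifying that the two displayed sequences really are short exact — that is, injectivity of the left-hand map and the precise identification of the kernel (respectively cokernel) on the right. Both are short diagram chases of the kind indicated above.
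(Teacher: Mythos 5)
Your argument for (1) is exactly the paper's: you form $\overline{M}=\operatorname{coker}(g)$ (the paper calls it $K$ with projection $h=\pi$), obtain the short exact sequence $0\to L\to \overline{M}\to N\to 0$, and split it via $\Ext^1_R(N,L)=0$. The paper dismisses (2) as dual; your explicit dualization via $M'=\ker(g)$ is correct and matches what is meant.
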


\begin{proof}
We only prove the first assertion, as the dual argument shows the second assertion.
We have an exact sequence $0\to L'\xrightarrow{g}M\xrightarrow{h}K\to0$, and it is seen that there is an exact sequence $0\to L\xrightarrow{hf}K\to N\to0$.
This splits as $\Ext_R^1(N,L)=0$, which means that there is a homomorphism $r:K\to L$ such that $rhf=1$.
Hence $f$ is a split monomorphism.
\end{proof}

\begin{lem}\label{371446}
Let $R$ be a commutative noetherian ring.
Let $\Y,\ZZ$ be resolving subcategories of $\mod R$ with $\Ext_R^1(Z,Y)=0$ for all $Y\in\Y$ and $Z\in\ZZ$.
Then $\res(\Y\cup\ZZ)$ coincides with the subcategory of $\mod R$ consisting of modules $M$ admitting an exact sequence $0 \to Z \to M\oplus N \to Y \to 0$ with $Y\in\Y$ and $Z\in\ZZ$.
\end{lem}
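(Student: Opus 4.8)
\textbf{Proof plan for Lemma \ref{371446}.}
The plan is to prove the two inclusions separately. Let $\mathcal{S}$ denote the subcategory of all $M\in\mod R$ admitting an exact sequence $0\to Z\to M\oplus N\to Y\to 0$ with $Y\in\Y$, $Z\in\ZZ$ and $N\in\mod R$ free (or, more flexibly, arbitrary — one should check which version is needed downstream; I expect $N$ free suffices). For the inclusion $\mathcal{S}\subseteq\res(\Y\cup\ZZ)$, note that if $M$ sits in such a sequence then $M\oplus N\in\res(\Y\cup\ZZ)$ because $\res(\Y\cup\ZZ)$ is extension-closed and contains both $Y$ and $Z$; since it is also closed under direct summands and contains $N$, we get $M\in\res(\Y\cup\ZZ)$. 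So the substance is the reverse inclusion $\res(\Y\cup\ZZ)\subseteq\mathcal{S}$, and for this it suffices to show that $\mathcal{S}$ is itself a resolving subcategory containing $\Y\cup\ZZ$, since $\res(\Y\cup\ZZ)$ is the smallest such.

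That $\mathcal{S}\supseteq\Y$ and $\mathcal{S}\supseteq\ZZ$ is immediate (take $Z=0$, $Y$ a syzygy-type padding, or directly $0\to 0\to Y\to Y\to 0$; and $0\to Z\to Z\to 0\to 0$). The three closure properties are where the work lies. Closure under direct summands: given $M=M_1\oplus M_2$ in $\mathcal{S}$ via $0\to Z\to M_1\oplus M_2\oplus N\to Y\to 0$, I would pull back along $M_1\hookrightarrow M_1\oplus M_2\oplus N$ — wait, more carefully, one wants to split off $M_2\oplus N$. Here is where Lemma \ref{371445} enters: since $\Ext^1_R(Y,Z)=0$, the map $Z\to M_1\oplus M_2\oplus N$ is a split monomorphism onto a summand, but that is not quite the right move. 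The cleaner approach: restrict the surjection $M_1\oplus M_2\oplus N\to Y$; this is a plan, not a full derivation, so I will say that one uses a pullback/pushout diagram together with Lemma \ref{371445}(1) applied to the splitting $Z\hookrightarrow (M_1\oplus N)\oplus M_2$ to produce an exact sequence $0\to Z'\to M_1\oplus N'\to Y'\to 0$ with $Z'\in\ZZ$, $Y'\in\Y$. Closure under syzygies: apply Proposition \ref{1}(1) to $0\to Z\to M\oplus N\to Y\to 0$ to get $0\to\syz Y\to \syz M\oplus R^{\oplus a}\to Z'\to 0$ for a suitable $Z'$; since $\ZZ$ and $\Y$ are resolving, $\syz Y\in\Y$ and $Z\in\ZZ$ — but the roles of $\Y$ and $\ZZ$ have swapped, so one needs the first sequence of Proposition \ref{1}(1) instead, namely $0\to\syz Y\to Z\oplus R^{\oplus a}\to M\oplus N\to 0$, which exhibits $\syz Y$ (in $\Y$) ... again the variances need care: one wants $M\oplus N$ on the right, and $\syz Y$ on the left with $\syz Y\in\Y$ and the middle term $Z\oplus R^{\oplus a}\in\ZZ$, giving $M\in\mathcal{S}$ only after reindexing which of $\Y,\ZZ$ plays which role. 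This forces the observation that $\mathcal{S}$ as defined may not literally be syzygy-closed unless $\Y,\ZZ$ are interchangeable in the defining sequence; the resolution is that $\syz$ of the defining sequence naturally produces a sequence of the \emph{same shape} with $Y$ replaced by $\syz Y\in\Y$ and $Z$ replaced by an extension of $Z$ by a free module, still in $\ZZ$. Closure under extensions: given $0\to M'\to M\to M''\to 0$ with $M',M''\in\mathcal{S}$, splice the two defining sequences using the horseshoe lemma and the vanishing $\Ext^1_R(\ZZ,\Y)=0$ to get a defining sequence for $M$.

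\textbf{Main obstacle.} The hard part will be bookkeeping the asymmetry between $\Y$ and $\ZZ$ in the defining exact sequence and making sure that each closure operation (especially summands and syzygies) returns a sequence of exactly the prescribed form $0\to Z\to M\oplus N\to Y\to 0$ rather than some variant with the factors transposed or with extra terms. The key technical inputs that make this go through cleanly are Lemma \ref{371445} (to split off unwanted summands using $\Ext^1_R(\ZZ,\Y)=0$) and Proposition \ref{1}(1) (to control how these sequences behave under $\syz$ and under horseshoe-type splicing); the $\Ext$-vanishing hypothesis is exactly what is needed at every step where a sequence must be shown to split or where two such sequences must be combined.
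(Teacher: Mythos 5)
Your overall strategy matches the paper's: let $\mathcal{S}$ be the class of modules $M$ fitting into $0\to Z\to M\oplus N\to Y\to 0$ with $Y\in\Y$, $Z\in\ZZ$, observe $\mathcal{S}\subseteq\res(\Y\cup\ZZ)$ easily, and then show $\mathcal{S}$ is resolving to get the reverse inclusion. But several of the details you flag as uncertain are genuine trouble spots, and a couple of your tentative choices go the wrong way.

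First, you hedge on whether $N$ should be free or arbitrary; the paper takes $N$ arbitrary, and this is not a cosmetic choice. With $N$ arbitrary, closure of $\mathcal{S}$ under direct summands is completely trivial: if $M=M_1\oplus M_2$ has a defining sequence $0\to Z\to M_1\oplus M_2\oplus N\to Y\to 0$, then the same sequence, read as $0\to Z\to M_1\oplus(M_2\oplus N)\to Y\to 0$, already exhibits $M_1\in\mathcal{S}$. Your attempt to split off summands with Lemma \ref{371445} is unneeded and would not work cleanly anyway, since there is no reason the inclusion $M_1\hookrightarrow M_1\oplus M_2\oplus N$ interacts well with the given sequence. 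Commit to $N$ arbitrary and this step evaporates.

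Second, your worry that syzygies ``swap the roles of $\Y$ and $\ZZ$'' is a false alarm that points to a sign error in your reading of Proposition \ref{1}(1). Applying its \emph{second} sequence to $0\to Z\to M\oplus N\to Y\to 0$ (here $L=Z$, the middle is $M\oplus N$, and the cokernel is $Y$) gives $0\to\syz Z\to\syz M\oplus\syz N\oplus R^{\oplus b}\to\syz Y\to 0$, which has exactly the prescribed shape with $\syz Z\in\ZZ$ on the left, $\syz Y\in\Y$ on the right, $\syz M$ in the $M$-slot, and $\syz N\oplus R^{\oplus b}$ as the new auxiliary $N$. The \emph{first} sequence you reach for instead would indeed scramble the shape, but it is simply the wrong tool here.

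Third, ``splice the two defining sequences using the horseshoe lemma'' undersells the one genuinely substantial step. The horseshoe lemma lifts a short exact sequence to one of free (or projective) resolutions; here you need to interleave an extension $0\to L\to M\to N\to 0$ with defining sequences $0\to Z_1\to L\oplus L'\to Y_1\to 0$ and $0\to Z_2\to N\oplus N'\to Y_2\to 0$, and that takes a chain of four $3\times 3$ pushout/pullback diagrams. The $\Ext^1(\ZZ,\Y)$-vanishing is invoked exactly once, to split the pullback sequence $0\to Y_1\to B\to Z_2\to 0$ so that $B\cong Y_1\oplus Z_2$; from there two more diagrams assemble $Y\in\Y$ (an extension of $Y_2$ by $Y_1$), $Z\in\ZZ$ (an extension of $Z_2$ by $Z_1$), and the sought sequence $0\to Z\to M\oplus L'\oplus N'\to Y\to 0$. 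So you identified the right ingredients but have not yet exhibited the diagram chase that actually uses them.
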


\begin{proof}
Let $\X$ denote the subcategory of $\mod R$ consisting of modules $M$ admitting an exact sequence $0 \to Z \to M\oplus N \to Y \to 0$ with $Y\in\Y$ and $Z\in\ZZ$.
Clearly, $\X$ is contained in $\res(\Y\cup\ZZ)$.
Let us show the opposite inclusion.
As a resolving subcategory contains the zero module, we see that $\X$ contains $\Y\cup\ZZ$.
In particular, $\X$ contains $R$.
If we have an exact sequence $0 \to Z \to M\oplus N \to Y \to 0$ of $R$-modules with $Y\in\Y$ and $Z\in\ZZ$, then there is an exact sequence $0 \to \syz Z \to \syz M\oplus \syz N \to \syz Y \to 0$, and $\syz Y,\syz Z$ are in $\Y,\ZZ$ respectively.
Hence $\X$ is closed under syzygies.

Now it remains to prove that $\X$ is closed under extensions.
Let $0 \to L \to M \to N \to 0$ be an exact sequence with $L,N\in\X$.
Then there are exact sequences $0 \to Z_1 \to L\oplus L' \to Y_1 \to 0$ and $0 \to Z_2 \to N\oplus N' \to Y_2 \to 0$ with $Y_i\in\Y$ and $Z_i\in\ZZ$ for $i=1,2$.
There are pushout and pullback diagrams:

{\footnotesize
$$
\begin{CD}
@. 0 @. 0 \\
@. @VVV @VVV \\
@. Z_1 @= Z_1 \\
@. @VVV @VVV \\
0 \to @. L\oplus L' @>>> M\oplus L' @>>> N @. \to 0 \\
@. @VVV @VVV @| \\
0 \to @. Y_1 @>>> A @>>> N @. \to 0 \\
@. @VVV @VVV \\
@. 0 @. 0
\end{CD}
\qquad\qquad
\begin{CD}
@. @. 0 @. 0 \\
@. @. @VVV @VVV \\
0 \to @. Y_1 @>>> B @>>> Z_2 @. \to 0 \\
@. @| @VVV @VVV \\
0 \to @. Y_1 @>>> A\oplus N' @>>> N\oplus N' @. \to 0 \\
@. @. @VVV @VVV \\
@. @. Y_2 @= Y_2 \\
@. @. @VVV @VVV \\
@. @. 0 @. 0
\end{CD}
$$
}

\noindent
By assumption the upper row in the right diagram splits, and we get an exact sequence $0 \to Y_1\oplus Z_2 \to A\oplus N' \to Y_2 \to 0$.
There are pushout and pullback diagrams:

{\footnotesize
$$
\begin{CD}
@. 0 @. 0 \\
@. @VVV @VVV \\
@. Z_2 @= Z_2 \\
@. @VVV @VVV \\
0 \to @. Y_1 \oplus Z_2 @>>> A\oplus N' @>>> Y_2 @. \to 0 \\
@. @VVV @VVV @| \\
0 \to @. Y_1 @>>> Y @>>> Y_2 @. \to 0 \\
@. @VVV @VVV \\
@. 0 @. 0
\end{CD}
\qquad\qquad
\begin{CD}
@. @. 0 @. 0 \\
@. @. @VVV @VVV \\
0 \to @. Z_1 @>>> Z @>>> Z_2 @. \to 0 \\
@. @| @VVV @VVV \\
0 \to @. Z_1 @>>> M\oplus L'\oplus N' @>>> A\oplus N' @. \to 0 \\
@. @. @VVV @VVV \\
@. @. Y @= Y \\
@. @. @VVV @VVV \\
@. @. 0 @. 0
\end{CD}
$$
}

\noindent
Since a resolving subcategory is closed under extensions, we have $Y\in\Y$ and $Z\in\ZZ$.
The middle column in the right diagram shows that $M\in\X$.
Thus $\X$ is closed under extensions.
\end{proof}

We denote by $\CM(R)$ the subcategory of $\mod R$ consisting of {\em maximal Cohen-Macaulay $R$-modules}, that is, $R$-modules $M$ such that $\depth M_\p\ge\height\p$ for all $\p\in\Spec R$.
The proposition below will be necessary to prove the main result of this section.

\begin{prop}\label{371520}
Let $R$ be a Gorenstein ring.
Let $\Y,\ZZ$ be resolving subcategories of $\mod R$ with $\Y\subseteq\PD(R)$ and $\ZZ\subseteq\CM(R)$.
Then one has $\Y=\res(\Y\cup\ZZ)\cap\PD(R)$ and $\ZZ=\res(\Y\cup\ZZ)\cap\CM(R)$.
\end{prop}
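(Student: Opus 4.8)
The plan is to prove the two equalities by combining the characterization of $\res(\Y\cup\ZZ)$ from Lemma \ref{371446} with the fact that over a Gorenstein ring both $\PD(R)$ and $\CM(R)$ are homologically rigid in a way that forces the defining exact sequence $0\to Z\to M\oplus N\to Y\to0$ to split. First I would note that $R$ Gorenstein together with $\Y\subseteq\PD(R)$ and $\ZZ\subseteq\CM(R)$ gives $\Ext_R^1(Z,Y)=0$ for all $Y\in\Y$, $Z\in\ZZ$: indeed $Y$ has finite projective dimension, hence finite injective dimension over the Gorenstein ring $R$, and $Z$ is maximal Cohen-Macaulay, so all positive $\Ext$ from $Z$ into $Y$ vanish (this can be checked locally, where it is the standard fact that $\Ext^{>0}_{R_\p}(\text{MCM},\text{finite injective dimension})=0$). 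Thus Lemma \ref{371446} applies and describes $\res(\Y\cup\ZZ)$ explicitly.

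Next I would prove $\Y=\res(\Y\cup\ZZ)\cap\PD(R)$. The inclusion $\Y\subseteq\res(\Y\cup\ZZ)\cap\PD(R)$ is immediate. For the reverse, take $M\in\res(\Y\cup\ZZ)\cap\PD(R)$; by Lemma \ref{371446} there is an exact sequence $0\to Z\to M\oplus N\to Y\to0$ with $Y\in\Y$, $Z\in\ZZ$. Since $M$ and $Y$ both have finite projective dimension, so does $Z$; but $Z\in\CM(R)$, and over a Gorenstein (even Cohen-Macaulay) ring a maximal Cohen-Macaulay module of finite projective dimension is free (locally, $\pd = \depth R - \depth = 0$). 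So $Z$ is free, hence projective, and the sequence $0\to Z\to M\oplus N\to Y\to0$ splits. Therefore $M$ is a direct summand of $Y\oplus Z$, which lies in $\Y$ since $\Y$ is resolving (contains $R$, closed under extensions and direct summands); hence $M\in\Y$. The dual argument proves $\ZZ=\res(\Y\cup\ZZ)\cap\CM(R)$: if $M\in\res(\Y\cup\ZZ)\cap\CM(R)$, then in $0\to Z\to M\oplus N\to Y\to0$ one has $\depth$ considerations (or a local $\Ext$ computation) showing $Y$ must be free — $Y\in\PD(R)$ and, comparing depths, $M\oplus N$ and $Z$ are MCM forcing $Y$ MCM, hence $Y$ free — so again the sequence splits and $M$ is a summand of $Y\oplus Z\in\ZZ$.

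The step I expect to require the most care is the vanishing $\Ext_R^1(Z,Y)=0$ and, relatedly, pinning down exactly why the relevant sequence splits in each of the two cases — i.e. organizing the "finite projective dimension MCM module is free" and "MCM module of finite projective dimension over Gorenstein is free" facts and their depth-counting variants so that they cover both $M\in\PD(R)$ and $M\in\CM(R)$ uniformly. Everything else is formal manipulation once Lemma \ref{371446} is in hand. It will also be worth remarking that the argument only uses that $R$ is Gorenstein to guarantee $\PD(R)$-modules have finite injective dimension; a reader may wish to note where this hypothesis is genuinely needed versus where Cohen-Macaulayness of $R$ would suffice.
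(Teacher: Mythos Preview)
There is a genuine gap in both halves of your argument, and it concerns the auxiliary module $N$ coming from Lemma \ref{371446}. That lemma only guarantees an exact sequence $0\to Z\to M\oplus N\to Y\to 0$ with $Y\in\Y$ and $Z\in\ZZ$; it says nothing about $N$. In your $\PD$ case you write ``since $M$ and $Y$ both have finite projective dimension, so does $Z$'', but the short exact sequence only yields $\pd Z<\infty$ once you know $\pd(M\oplus N)<\infty$, i.e.\ once you know $\pd N<\infty$ --- and you do not. Likewise, in your $\CM$ case you assert ``$M\oplus N$ and $Z$ are MCM'', which again presupposes $N\in\CM(R)$. Since $N$ is merely some module in $\res(\Y\cup\ZZ)$, neither conclusion is justified, and the splitting you want does not follow. (Concretely, the sequence $0\to Z\to M\oplus N\to Y\to 0$ need not split even when $M\in\PD(R)$: the relevant obstruction is $\Ext^1_R(Y,Z)$, not the $\Ext^1_R(Z,Y)$ that you verified vanishes.)

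The paper circumvents this by never trying to control $N$. Starting from the same sequence, it pushes out along a sequence $0\to Z\to P\to\syz^{-1}Z\to 0$ with $P$ projective and $\syz^{-1}Z\in\CM(R)$, obtaining $0\to M\oplus N\to Y'\to\syz^{-1}Z\to 0$ with $Y'\in\Y$ (an extension of $Y$ by $P$). Now one only needs $\Ext_R^1(\syz^{-1}Z,M)=0$, which holds because $M$ alone has finite projective (hence injective) dimension; Lemma \ref{371445}(1) then splits off $M$ as a summand of $Y'\in\Y$. The $\CM$ case is handled dually via pullback along $0\to\syz Y\to Q\to Y\to 0$. The point is that Lemma \ref{371445} lets you peel off the single summand $M$ using an $\Ext$-vanishing that involves only $M$, bypassing the uncontrolled $N$ entirely.
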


\begin{proof}
It is obvious that $\Y\subseteq\res(\Y\cup\ZZ)\cap\PD(R)$ and $\ZZ\subseteq\res(\Y\cup\ZZ)\cap\CM(R)$ hold.
Since $R$ is Gorenstein, $\Ext_R^1(M,P)=0$ for all $M\in\CM(R)$ and all $P\in\PD(R)$; see \cite[Theorems (4.13) and (4.20)]{AB}.
Hence $\Ext_R^1(Z,Y)=0$ for all $Y\in\Y$ and $Z\in\ZZ$.

Now take any $R$-module $M$ in $\res(\Y\cup\ZZ)$.
Lemma \ref{371446} implies that there exists an exact sequence
\begin{equation}\label{3714571}
0 \to Z \to M\oplus N \to Y \to 0
\end{equation}
with $Y\in\Y$ and $Z\in\ZZ$.

Since $Z$ is maximal Cohen-Macaulay, we have an exact sequence
\begin{equation}\label{3714572}
0 \to Z \to P \to \syz^{-1}Z \to 0,
\end{equation}
where $P$ is projective and $\syz^{-1}Z=\Hom_R(\syz(\Hom_R(Z,R)),R)$.
The pushout diagram of \eqref{3714571} and \eqref{3714572} gives rise to an exact sequence $0 \to M\oplus N \to Y' \to \syz^{-1}Z \to 0$ with $Y'\in\Y$.
Now suppose that $M$ has finite projective dimension.
Then, as $\syz^{-1}Z$ is maximal Cohen-Macaulay, we have $\Ext_R^1(\syz^{-1}Z,M)=0$.
Lemma \ref{371445}(1) implies that $M$ is isomorphic to a direct summand of $Y'$, and hence $M$ belongs to $\Y$.

Similarly, making the pullback diagram of \eqref{3714571} and $0 \to \syz Y \to Q \to Y \to 0$ with $Q$ projective and applying Lemma \ref{371445}(2), we deduce that $M$ is in $\ZZ$ if $M$ is maximal Cohen-Macaulay.
\end{proof}

Now we can prove our main result in this section.

\begin{thm}\label{lci}
Let $R$ be a locally complete intersection ring.
There exists a one-to-one correspondence
$$
\begin{CD}
\left\{
\begin{matrix}
\text{Resolving}\\
\text{subcategories}\\
\text{of $\mod R$}
\end{matrix}
\right\}
\begin{matrix}
@>{\Phi}>>\\
@<<{\Psi}<
\end{matrix}
\left\{
\begin{matrix}
\text{Resolving}\\
\text{subcategories}\\
\text{of $\PD(R)$}
\end{matrix}
\right\}
\times
\left\{
\begin{matrix}
\text{Resolving}\\
\text{subcategories}\\
\text{of $\CM(R)$}
\end{matrix}
\right\}.
\end{CD}
$$
Here $\Phi,\Psi$ are defined by $\Phi(\X)= (\X \cap \PD(R), \X \cap \CM(R))$ and $\Psi(\Y, \ZZ) = \res (\Y \cup \ZZ)$.
\end{thm}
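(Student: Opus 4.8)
The plan is to verify that $\Phi$ and $\Psi$ are mutually inverse. Since a locally complete intersection ring is Gorenstein, Proposition \ref{371520} and Lemma \ref{371446} are available for $R$ throughout, and checking that $\Phi,\Psi$ are well defined is routine: $\PD(R)$ and $\CM(R)$ are resolving, intersections of resolving subcategories are resolving, and $\res(\Y\cup\ZZ)$ is resolving by construction.

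For $\Phi\Psi=\mathrm{id}$, let $\Y\subseteq\PD(R)$ and $\ZZ\subseteq\CM(R)$ be resolving subcategories. Proposition \ref{371520} gives exactly $\res(\Y\cup\ZZ)\cap\PD(R)=\Y$ and $\res(\Y\cup\ZZ)\cap\CM(R)=\ZZ$, that is, $\Phi(\Psi(\Y,\ZZ))=(\Y,\ZZ)$.

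The substance is $\Psi\Phi=\mathrm{id}$. Write $\Y_0=\X\cap\PD(R)$ and $\ZZ_0=\X\cap\CM(R)$; the inclusion $\res(\Y_0\cup\ZZ_0)\subseteq\X$ is immediate, so I fix $M\in\X$ and must produce $M\in\res(\Y_0\cup\ZZ_0)$, reducing first to the local case by Lemma \ref{2182108}(1) and Proposition \ref{2182101}. The key construction is the maximal Cohen--Macaulay approximation $0\to U\to X\to M\to 0$ over the Gorenstein ring $R$, with $\pd_R U<\infty$ and $X\in\CM(R)$; pushing this out along a free cosyzygy $0\to X\to Q\to V\to 0$ yields a co-approximation $0\to M\to W\to V\to 0$ with $V=\syz^{-1}X\in\CM(R)$ and $\pd_R W<\infty$, in which $U$ is a first syzygy of $W$ up to free summands. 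Proposition \ref{1}(1) applied to $0\to M\to W\to V\to 0$ gives $0\to\syz V\to M\oplus R^{a}\to W\to 0$, so that $M$, being a direct summand of the middle term, lies in $\res(\{W,\syz V\})$. Here $W\in\PD(R)$ and $\syz V\in\CM(R)$, so by Proposition \ref{371520} it suffices to prove $W\in\X$ and $\syz V\in\X$; and since $W$ is an extension of $V$ by $M$ while $\syz V$ is a syzygy of $V$, the whole matter reduces to showing $V\in\X$.

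I would deduce $V\in\X$ from the single statement that, over a (local) complete intersection, a resolving subcategory $\X$ of $\mod R$ meets $\CM(R)$ in a subcategory closed under cosyzygies. Granting this, one argues by induction on $\CMdim M$ (finite since $R$ is Cohen--Macaulay, and strictly decreasing from $M$ to $\syz M$ by Lemma \ref{5.2}(1)(f)), the inductive hypothesis including that the maximal Cohen--Macaulay approximation of a module of smaller $\CMdim$ lands in $\X$: if $M$ is maximal Cohen--Macaulay or of finite projective dimension, then $M\in\ZZ_0$ or $M\in\Y_0$ and there is nothing to do; otherwise $\syz M\in\X$ has smaller $\CMdim$, so by induction its maximal Cohen--Macaulay approximation lies in $\X$, and since (by Proposition \ref{1}(1) again) that approximation is $\syz X$ up to a free summand, we get $\syz X\in\X$, hence $X\cong\syz^{-1}(\syz X)\in\X$ by the cosyzygy hypothesis, hence $V=\syz^{-1}X\in\X$ and $W\in\X$. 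The main obstacle is precisely this cosyzygy statement: resolving subcategories are designed to be closed under syzygies, not cosyzygies, and showing that the image of $\X\cap\CM(R)$ in the singularity category $\underline{\CM}(R)$ is closed under the inverse of the syzygy shift is exactly where the complete intersection hypothesis — through the periodic, or eventually polynomial, behaviour of minimal free resolutions and the resulting structure of $\underline{\CM}(R)$ — has to be used. Once it is in place, the diagram chase above assembles to give $M\in\res(\Y_0\cup\ZZ_0)$, and therefore $\Psi\Phi=\mathrm{id}$.
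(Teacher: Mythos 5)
Your overall strategy is the same as the paper's: establish $\Phi\Psi=1$ directly from Proposition \ref{371520}, and for $\Psi\Phi=1$ use a maximal Cohen--Macaulay approximation together with the pushout trick to split a given $X\in\X$ into a finite projective dimension piece and a maximal Cohen--Macaulay piece, the latter being a cosyzygy of a syzygy of $X$. The content is then exactly the cosyzygy problem you isolate. Two remarks on where the proposal falls short of a complete proof, and where it is needlessly elaborate.

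First, the genuine gap: you identify the crucial ingredient --- that over a local complete intersection, if $N$ is maximal Cohen--Macaulay then $\syz^{-1}N$ belongs to $\res N$ --- but you neither prove it nor cite a result that supplies it. You gesture at the ``periodic, or eventually polynomial'' behaviour of resolutions and the structure of $\underline{\CM}(R)$, which is the right circle of ideas, but no argument is given. The paper resolves exactly this point by invoking \cite[Theorem 4.15]{radius} (after reducing to the local case via Lemma \ref{2182108}(1) and Proposition \ref{2182101}). Without that result or a proof of it, your $\Psi\Phi=1$ is not established. This is not a cosmetic omission: the whole reason the complete intersection hypothesis appears in Theorem \ref{lci} is precisely because this cosyzygy closure fails for general Gorenstein rings, so the statement cannot be dismissed as routine.

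Second, the induction on $\CMdim M$ is more machinery than is needed, and this is where the paper's route is tidier. You want to show the approximation module $X$ lies in $\X$ before you can apply cosyzygy closure, and you manufacture an induction for that purpose. The paper sidesteps the issue entirely: set $n=\Rfd_R X<\infty$, note $\syz^nX$ is maximal Cohen--Macaulay and automatically in $\X$ by syzygy closure, and take the approximation sequence $0\to F\to C\to X\to 0$ with $C=\syz^{-n}\syz^nX$. Then the module you need in $\X$, namely $\syz^{-1}C=\syz^{-n-1}(\syz^nX)$, is a cosyzygy of the module $\syz^nX$, which is already known to lie in $\X$; one application of the cosyzygy result to $\res(\syz^nX)$ finishes. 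No induction, and no need to first place the approximation itself in $\X$. If you replace your inductive step by this direct observation, and supply a proof or reference for the cosyzygy closure over complete intersections, your argument becomes a correct proof along the same lines as the paper's.
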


\begin{proof}
Evidently, $\Phi$ and $\Psi$ are well-defined maps.
Proposition \ref{371520} guarantees that $\Phi\Psi=1$ holds.
It remains to show the equality $\Psi\Phi=1$.
Let $\X$ be a resolving subcategory of $\mod R$.
It is easy to see that $\Psi\Phi(\X)$ is contained in $\X$.
Take a module $X\in \X$ and set $n=\Rfd_RX<\infty$ (cf. \cite[Theorem 1.1]{AIL}).
Note that $\syz^nX$ is maximal Cohen-Macaulay by \cite[Proposition (2.3) and Theorem (2.8)]{CFF}.

By construction essentially given in \cite{AB,ABu}, we have an exact sequence
\begin{equation}\label{371608}
0 \to F \to C \to X \to 0
\end{equation}
of $R$-modules with $F\in\PD(R)$ and $C=\syz^{-n}\syz^nX\in\CM(R)$.
{For the convenience of the reader, we give a proof, since we
cannot find the precise reference for this fact.
There are exact sequences $0 \to \syz^nX \to P_{n-1} \to \cdots \to P_0 \to X \to 0$ and $\cdots \to Q_2 \to Q_1 \to Q_0 \to (\syz^nX)^\ast \to 0$, where all $P_i,Q_j$ are projective and $(-)^\ast=\Hom_R(-,R)$.
There exists a chain map from the $R$-dual complex of the first sequence to the second one that extends the identity map of $(\syz^nX)^\ast$, and taking its $R$-dual gives a commutatve diagram
$$
\begin{CD}
0 @>>> \syz^nX @>>> P_{n-1} @>>> \cdots @>>> P_0 @>>> X @>>> 0 \\
@. @| @A{f_0}AA @. @A{f_{n-1}}AA @A{f_n}AA \\
0 @>>> \syz^nX @>>> Q_0^\ast @>>> \cdots @>>> Q_{n-1}^\ast @>>> \syz^{-n}\syz^nX @>>> 0
\end{CD}
$$
with exact rows.
Adding free modules to the modules $Q_i$, we may assume that all $f_i$ are surjective.
Taking the kernels of $f_i$, we obtain a desired exact sequence.}

The pushout diagram of \eqref{371608} and an exact sequence $0 \to C \to G \to \syz^{-1}C \to 0$ with $G$ projective yields an exact sequence $0 \to X \to L \to \syz^{-1}C \to 0$, where $L\in\PD(R)$ and $\syz^{-1}C=\syz^{-n-1}(\syz^nX)\in\CM(R)$.
By Lemma \ref{2182108}(1), Proposition \ref{2182101} and \cite[Theorem 4.15]{radius}, the module $\syz^{-1}C$ belongs to $\res(\syz^nX)$.
Hence $\syz^{-1}C\in\X\cap\CM(R)$, and we see that $L\in\X\cap\PD(R)$ by the above exact sequence.
Consequently, $X$ belongs to $\res((\X\cap\PD(R))\cap(\X\cap\CM(R)))=\Psi\Phi(\X)$, and we conclude $\Psi\Phi=1$.
\end{proof}

Now let us recall the statement of Theorem \ref{hyper} and give a proof.

\begin{thmc}
\begin{enumerate}[\rm(1)]
\item
Let $R$ be a locally hypersurface ring.
There is a 1-1 correspondence
$$
\begin{CD}
\left\{
\begin{matrix}
\text{Resolving subcategories}\\
\text{of $\mod R$}
\end{matrix}
\right\}
\begin{matrix}
@>{\Phi}>>\\
@<<{\Psi}<
\end{matrix}
\left\{
\begin{matrix}
\text{Specialization closed}\\
\text{subsets of $\Sing R$}
\end{matrix}
\right\}
\times
\left\{
\begin{matrix}
\text{Grade consistent}\\
\text{functions on $\Spec R$}
\end{matrix}
\right\}.
\end{CD}
$$
One defines $\Phi,\Psi$ by $\Phi(\X)=(\IPD(\X),\phi(\X))$ with $\phi(\X)(\p)=\height\p-\min_{X\in\X}\{\depth X_\p\}$ and $\Psi(W,f)=\{\,M\in\IPD^{-1}(W)\mid\text{$\depth M_\p\ge\height\p-f(\p)$ for all $\p\in\Spec R$}\,\}$.
\item
Let $R=S/(\xx)$ where $S$ is a regular  ring and $\xx=x_1,\dots,x_c$ is a regular sequence on $S$.
Set $X=\P_S^{c-1}=\Proj S[y_1,\cdots,y_c]$ and let $Y$ be the zero subscheme of $\sum_{i=1}^cx_iy_i\in\Gamma(X,\OO_X(1))$.
Then one has a 1-1 correspondence
$$
\begin{CD}
\left\{
\begin{matrix}
\text{Resolving subcategories}\\
\text{of $\mod R$}
\end{matrix}
\right\}
\begin{matrix}
@>>>\\
@<<<
\end{matrix}
\left\{
\begin{matrix}
\text{Specialization closed}\\
\text{subsets of $\Sing Y$}
\end{matrix}
\right\}
\times
\left\{
\begin{matrix}
\text{Grade consistent}\\
\text{functions on $\Spec R$}
\end{matrix}
\right\}.
\end{CD}
$$
\end{enumerate}
\end{thmc}

\begin{proof}[{\bf Proof of Theorem \ref{hyper}}]
(1) By virtue of Theorems \ref{1.2} and \ref{lci}, we only need to show that the maps $\IPD$ and $\IPD^{-1}$ give mutually inverse bijections between the resolving subcategories contained in $\CM(R)$ and the specialization closed subsets of $\Sing R$. But this follows from \cite[Main Theorem]{stcm}, Lemma \ref{2182108}(1) and Proposition \ref{2182101}. 

(2) This assertion follows by combining Theorems \ref{1.2} and \ref{lci} with \cite[Corollary 4.16]{radius}, \cite[Proposition 6.2]{stcm} and \cite[Corollary 10.5 and Remark 10.7]{S}.
\end{proof}

\section*{Acknowledgments}
The authors are grateful to the referees for reading the paper carefully and giving valuable comments and useful suggestions.

\end{document}